\newwrite\@unused
\def\typeout#1{{\let\protect\string\immediate\write\@unused{#1}}}
\def\@nnil{\@nil}
\def\@empty{}
\def\@psdonoop#1\@@#2#3{}
\def\@psdo#1:=#2\do#3{\edef\@psdotmp{#2}\ifx\@psdotmp\@empty \else
    \expandafter\@psdoloop#2,\@nil,\@nil\@@#1{#3}\fi}
\def\@psdoloop#1,#2,#3\@@#4#5{\def#4{#1}\ifx #4\@nnil \else
       #5\def#4{#2}\ifx #4\@nnil \else#5\@ipsdoloop #3\@@#4{#5}\fi\fi}
\def\@ipsdoloop#1,#2\@@#3#4{\def#3{#1}\ifx #3\@nnil 
       \let\@nextwhile=\@psdonoop \else
      #4\relax\let\@nextwhile=\@ipsdoloop\fi\@nextwhile#2\@@#3{#4}}
\def\@tpsdo#1:=#2\do#3{\xdef\@psdotmp{#2}\ifx\@psdotmp\@empty \else
    \@tpsdoloop#2\@nil\@nil\@@#1{#3}\fi}
\def\@tpsdoloop#1#2\@@#3#4{\def#3{#1}\ifx #3\@nnil 
       \let\@nextwhile=\@psdonoop \else
      #4\relax\let\@nextwhile=\@tpsdoloop\fi\@nextwhile#2\@@#3{#4}}
\def\psdraft{
	\def\@psdraft{0}
}
\def\psfull{
	\def\@psdraft{100}
}
\newif\if@prologfile
\newif\if@postlogfile
\newif\if@noisy
\def\pssilent{
	\@noisyfalse
}
\def\psnoisy{
	\@noisytrue
}
\newif\if@bbllx
\newif\if@bblly
\newif\if@bburx
\newif\if@bbury
\newif\if@height
\newif\if@width
\newif\if@rheight
\newif\if@rwidth
\newif\if@clip
\newif\if@verbose
\def\@p@@sclip#1{\@cliptrue}
\def\@p@@sfile#1{
		   \def\@p@sfile{#1}
}
\def\@p@@sfigure#1{\def\@p@sfile{#1}}
\def\@p@@sbbllx#1{
		\@bbllxtrue
		\dimen100=#1
		\edef\@p@sbbllx{\number\dimen100}
}
\def\@p@@sbblly#1{
		\@bbllytrue
		\dimen100=#1
		\edef\@p@sbblly{\number\dimen100}
}
\def\@p@@sbburx#1{
		\@bburxtrue
		\dimen100=#1
		\edef\@p@sbburx{\number\dimen100}
}
\def\@p@@sbbury#1{
		\@bburytrue
		\dimen100=#1
		\edef\@p@sbbury{\number\dimen100}
}
\def\@p@@sheight#1{
		\@heighttrue
		\dimen100=#1
   		\edef\@p@sheight{\number\dimen100}
}
\def\@p@@swidth#1{
		\@widthtrue
		\dimen100=#1
		\edef\@p@swidth{\number\dimen100}
}
\def\@p@@srheight#1{
		\@rheighttrue
		\dimen100=#1
		\edef\@p@srheight{\number\dimen100}
}
\def\@p@@srwidth#1{
		\@rwidthtrue
		\dimen100=#1
		\edef\@p@srwidth{\number\dimen100}
}
\def\@p@@ssilent#1{ 
		\@verbosefalse
}
\def\@p@@sprolog#1{\@prologfiletrue\def\@prologfileval{#1}}
\def\@p@@spostlog#1{\@postlogfiletrue\def\@postlogfileval{#1}}
\def\@cs@name#1{\csname #1\endcsname}
\def\@setparms#1=#2,{\@cs@name{@p@@s#1}{#2}}
\def\ps@init@parms{
		\@bbllxfalse \@bbllyfalse
		\@bburxfalse \@bburyfalse
		\@heightfalse \@widthfalse
		\@rheightfalse \@rwidthfalse
		\def\@p@sbbllx{}\def\@p@sbblly{}
		\def\@p@sbburx{}\def\@p@sbbury{}
		\def\@p@sheight{}\def\@p@swidth{}
		\def\@p@srheight{}\def\@p@srwidth{}
		\def\@p@sfile{}
		\def\@p@scost{10}
		\def\@sc{}
		\@prologfilefalse
		\@postlogfilefalse
		\@clipfalse
		\if@noisy
			\@verbosetrue
		\else
			\@verbosefalse
		\fi
}
\def\parse@ps@parms#1{
	 	\@psdo\@psfiga:=#1\do
		   {\expandafter\@setparms\@psfiga,}}
\newif\ifno@bb
\newif\ifnot@eof
\newread\ps@stream
\def\bb@missing{
	\if@verbose{
		\typeout{psfig: searching \@p@sfile \space  for bounding box}
	}\fi
	\openin\ps@stream=\@p@sfile
	\no@bbtrue
	\not@eoftrue
	\catcode`\%=12
	\loop
		\read\ps@stream to \line@in
		\global\toks200=\expandafter{\line@in}
		\ifeof\ps@stream \not@eoffalse \fi
		\@bbtest{\toks200}
		\if@bbmatch\not@eoffalse\expandafter\bb@cull\the\toks200\fi
	\ifnot@eof \repeat
	\catcode`\%=14
}	
\newif\if@bbmatch
\def\@bbtest#1{\expandafter\@a@\the#1
\long\def\@a@#1
\long\def\bb@cull#1 #2 #3 #4 #5 {
	\dimen100=#2 bp\edef\@p@sbbllx{\number\dimen100}
	\dimen100=#3 bp\edef\@p@sbblly{\number\dimen100}
	\dimen100=#4 bp\edef\@p@sbburx{\number\dimen100}
	\dimen100=#5 bp\edef\@p@sbbury{\number\dimen100}
	\no@bbfalse
}
\catcode`\%=14
\def\compute@bb{
		\no@bbfalse
		\if@bbllx \else \no@bbtrue \fi
		\if@bblly \else \no@bbtrue \fi
		\if@bburx \else \no@bbtrue \fi
		\if@bbury \else \no@bbtrue \fi
		\ifno@bb \bb@missing \fi
		\ifno@bb \typeout{FATAL ERROR: no bb supplied or found}
			\no-bb-error
		\fi
		\count203=\@p@sbburx
		\count204=\@p@sbbury
		\advance\count203 by -\@p@sbbllx
		\advance\count204 by -\@p@sbblly
		\edef\@bbw{\number\count203}
		\edef\@bbh{\number\count204}
}
%
%
\def\in@hundreds#1#2#3{\count240=#2 \count241=#3
		     \count100=\count240	
		     \divide\count100 by \count241
		     \count101=\count100
		     \multiply\count101 by \count241
		     \advance\count240 by -\count101
		     \multiply\count240 by 10
		     \count101=\count240	
		     \divide\count101 by \count241
		     \count102=\count101
		     \multiply\count102 by \count241
		     \advance\count240 by -\count102
		     \multiply\count240 by 10
		     \count102=\count240	
		     \divide\count102 by \count241
		     \count200=#1\count205=0
		     \count201=\count200
			\multiply\count201 by \count100
		 	\advance\count205 by \count201
		     \count201=\count200
			\divide\count201 by 10
			\multiply\count201 by \count101
			\advance\count205 by \count201
		     \count201=\count200
			\divide\count201 by 100
			\multiply\count201 by \count102
			\advance\count205 by \count201
		     \edef\@result{\number\count205}
}
\def\compute@wfromh{
		\in@hundreds{\@p@sheight}{\@bbw}{\@bbh}
		\edef\@p@swidth{\@result}
}
\def\compute@hfromw{
		\in@hundreds{\@p@swidth}{\@bbh}{\@bbw}
		\edef\@p@sheight{\@result}
}
\def\compute@handw{
		\if@height 
			\if@width
			\else
				\compute@wfromh
			\fi
		\else 
			\if@width
				\compute@hfromw
			\else
				\edef\@p@sheight{\@bbh}
				\edef\@p@swidth{\@bbw}
			\fi
		\fi
}
\def\compute@resv{
		\if@rheight \else \edef\@p@srheight{\@p@sheight} \fi
		\if@rwidth \else \edef\@p@srwidth{\@p@swidth} \fi
}
%
\def\compute@sizes{
	\compute@bb
	\compute@handw
	\compute@resv
}
%
%
\def\psfig#1{\vbox {
	%
	\ps@init@parms
	\parse@ps@parms{#1}
	\compute@sizes
	\ifnum\@p@scost<\@psdraft{
		\if@verbose{
			\typeout{psfig: including \@p@sfile \space }
		}\fi
		\special{ps::[begin] 	\@p@swidth \space \@p@sheight \space
				\@p@sbbllx \space \@p@sbblly \space
				\@p@sbburx \space \@p@sbbury \space
				startTexFig \space }
		\if@clip{
			\if@verbose{
				\typeout{(clip)}
			}\fi
			\special{ps:: doclip \space }
		}\fi
		\if@prologfile
		    \special{ps: plotfile \@prologfileval \space } \fi
		\special{ps: plotfile \@p@sfile \space }
		\if@postlogfile
		    \special{ps: plotfile \@postlogfileval \space } \fi
		\special{ps::[end] endTexFig \space }
		\vbox to \@p@srheight true sp{
			\hbox to \@p@srwidth true sp{
				\hss
			}
		\vss
		}
	}\else{
		\vbox to \@p@srheight true sp{
		\vss
			\hbox to \@p@srwidth true sp{
				\hss
				\if@verbose{
					\@p@sfile
				}\fi
				\hss
			}
		\vss
		}
	}\fi
}}
\catcode`\@=12\relax



\newcommand{\co}{\text{\rm \textbf{correction:}}}
\newcommand{\fb}[1]{{{#1}}}
\newcommand{\jms}[1]{{{#1}}}

\sloppy

\begin{document}

\newtheorem{cor}{Corollary}[section]
\newtheorem{theorem}[cor]{Theorem}
\newtheorem{prop}[cor]{Proposition}
\newtheorem{lemma}[cor]{Lemma}
\newtheorem{sublemma}[cor]{Sublemma}
\newtheorem{defi}[cor]{Definition}

\theoremstyle{remark}
\newtheorem{remark}[cor]{Remark}
\newtheorem{example}[cor]{Example}

\newcommand{\cD}{{\mathcal D}}
\newcommand{\FF}{{\mathcal F}}
\newcommand{\cH}{{\mathcal H}}
\newcommand{\cL}{{\mathcal L}}
\newcommand{\cM}{{\mathcal M}}
\newcommand{\cT}{{\mathcal T}}
\newcommand{\cML}{{\mathcal M\mathcal L}}
\newcommand{\cGH}{{\mathcal G\mathcal H}}
\newcommand{\C}{{\mathbb C}}
\newcommand{\D}{{\mathbb D}}
\newcommand{\N}{{\mathbb N}}
\newcommand{\R}{{\mathbb R}}
\newcommand{\Z}{{\mathbb Z}}
\newcommand{\Kt}{\tilde{K}}
\newcommand{\Mt}{\tilde{M}}
\newcommand{\dr}{{\partial}}
\newcommand{\tr}{\mbox{tr}}
\newcommand{\isom}{\mbox{Isom}}
\newcommand{\isomz}{\mbox{Isom}_{0,+}}
\newcommand{\vect}{\mbox{Vect}}
\newcommand{\kappab}{\overline{\kappa}}
\newcommand{\pib}{\overline{\pi}}
\newcommand{\Sigmab}{\overline{\Sigma}}
\newcommand{\gd}{\dot{g}}
\newcommand{\diff}{\mbox{Diff}}
\newcommand{\dev}{\mbox{dev}}
\newcommand{\devb}{\overline{\mbox{dev}}}
\newcommand{\devt}{\tilde{\mbox{dev}}}
\newcommand{\vol}{\mbox{Vol}}
\newcommand{\hess}{\mbox{Hess}}
\newcommand{\db}{\overline{\partial}}
\newcommand{\gammab}{\overline{\gamma}}
\newcommand{\Sigmat}{\tilde{\Sigma}}
\newcommand{\mut}{\tilde{\mu}}
\newcommand{\phit}{\tilde{\phi}}

\newcommand{\cunc}{{\mathcal C}^\infty_c}
\newcommand{\cun}{{\mathcal C}^\infty}
\newcommand{\dd}{d_D}
\newcommand{\dmin}{d_{\mathrm{min}}}
\newcommand{\dmax}{d_{\mathrm{max}}}
\newcommand{\Dom}{\mathrm{Dom}}
\newcommand{\dn}{d_\nabla}
\newcommand{\ded}{\delta_D}
\newcommand{\delmin}{\delta_{\mathrm{min}}}
\newcommand{\delmax}{\delta_{\mathrm{max}}}
\newcommand{\hmin}{H_{\mathrm{min}}}
\newcommand{\maxi}{\mathrm{max}}
\newcommand{\oL}{\overline{L}}
\newcommand{\oP}{{\overline{P}}}
\newcommand{\Ran}{\mathrm{Ran}}
\newcommand{\tgamma}{\tilde{\gamma}}
\newcommand{\cotan}{\mbox{cotan}}
\newcommand{\lambdat}{\tilde\lambda}
\newcommand{\St}{\tilde S}

\newcommand{\II}{I\hspace{-0.1cm}I}
\newcommand{\III}{I\hspace{-0.1cm}I\hspace{-0.1cm}I}
\newcommand{\HSt}{\tilde{\operatorname{HS}}}
\newcommand{\note}[1]
{\vspace{10pt}\par\noindent\begin{center}
 \psshadowbox[linecolor=red, shadowcolor=blue]
{\begin{minipage}{.90\textwidth}
#1\end{minipage}}
\end{center}\vspace{10pt}\par\noindent}

\newcommand{\new}[1]{{\color{blue} #1} }

\newcommand{\old}[1]{{\color{red}[ #1 ]}}

\newcommand{\op}{\operatorname}

\newcommand{\AdS}{\operatorname{AdS}}
\newcommand{\uAdS}{\widetilde{\operatorname{AdS}}}
\newcommand{\dS}{\operatorname{dS}}
\newcommand{\HH}{\mathbb H}
\newcommand{\PP}{\mathbb P}
\newcommand{\RR}{\mathbb R}
\newcommand{\uRP}{\widetilde{\R\PP}^1}
\newcommand{\rp}{\R\PP}
\newcommand{\HS}{\operatorname{HS}}
\newcommand{\SO}{\operatorname{SO}}
\newcommand{\cF}{\operatorname{\mathcal{F}}}
\newcommand{\kD}{\mathfrak{D}}

\newcommand{\reg}{\mathrm{reg}}
\newcommand{\link}{\Sigma}
\newcommand{\slice}{S}
\newcommand{\hol}{h}
\newcommand{\struct}{\mathcal U(g, T, \theta)}

\newcommand{\ch}{\mathrm{cosh}}
\newcommand{\sh}{\mathrm{sinh}}

\title[Collisions of particles]{Collisions of particles in locally AdS spacetimes II\\
Moduli of globally hyperbolic spaces}

\author[]{Thierry Barbot}
\address{Laboratoire d'analyse non lin\'eaire et g\'eom\'etrie\\
Universit\'e d'Avignon et des pays de Vaucluse\\
33, rue Louis Pasteur\\
F-84 018 Avignon, France}
\email{thierry.barbot@univ-avignon.fr}
\author[]{Francesco Bonsante}
\address{Dipartimento di Matematica dell'Universit\`a degli Studi di Pavia,
via Ferrata 1, 27100 Pavia, Italy}
\email{francesco.bonsante@unipv.it}
\author[]{Jean-Marc Schlenker}
\address{Institut de Math\'ematiques de Toulouse, 
UMR CNRS 5219 \\
Universit\'e Paul Sabatier\\
31062 Toulouse Cedex 9, France}
\email{schlenker@math.univ-toulouse.fr}
\thanks{T. B. and F. B. were partially supported by CNRS, ANR GEODYCOS. J.-M. S. was
partially supported by the A.N.R. programs RepSurf,
ANR-06-BLAN-0311, GeomEinstein, 06-BLAN-0154, and ETTT, 2009-2013}

\keywords{Anti-de Sitter space, singular spacetimes, BTZ black hole}
\subjclass{83C80 (83C57), 57S25}
\date{\today}

\begin{abstract}
We investigate globally hyperbolic 3-dimensional AdS manifolds containing ``particles'',
i.e., cone singularities of angles less than $2\pi$ along a time-like graph $\Gamma$. 
To each such space we associate a graph and a finite family of pairs of hyperbolic
surfaces with cone singularities. We show that this data is sufficient to recover
the space locally (i.e., in the neighborhood of a fixed metric). This is a partial
extension of a result of Mess for non-singular globally hyperbolic AdS manifolds.
\end{abstract}

\maketitle

\tableofcontents

\section{Introduction}

\subsection{The 3-dimensional AdS space}

The anti-de Sitter space, $AdS_n$, is a complete Lorentzian manifold of 
constant curvature $-1$. It can be defined as a quadric in the space 
$\R^{n-1,2}$, that is, $\R^{n+1}$ endowed with a symmetric bilinear form of
signature $(n-1,2)$:
$$ AdS_n = \{ x\in \R^{n-1,2} ~|~ \langle x,x\rangle = -1 \}~. $$
In some ways, $AdS_3$ can be considered as a Lorentz analog of hyperbolic
3-space. We are interested here in manifolds endowed with a geometric 
structure which, outside some singular locus, is locally isometric to
$AdS_3$.

\subsection{Globally hyperbolic AdS spacetimes}
\label{sub:GH-AdS} 

A Lorentz $3$-manifold $M$ is AdS if it is locally modeled on $AdS_3$. 
Such a manifold is {\it globally hyperbolic maximal compact} (GHMC) if 
it contains a closed, space-like surface $S$, if any inextendible time-like
curve in $M$ intersects $S$ exactly once, and if $M$ is maximal under this
condition (any isometric embedding of $M$ in an AdS 3-manifold satisfying
the same conditions is actually an isometry). Those manifolds can in some
respects be considered as Lorentz analogs of quasifuchsian hyperbolic 
3-manifolds.

Let $S$ be a closed surface of genus at least $2$. A well-known theorem 
of Bers \cite{bers} asserts that the space of quasifuchsian hyperbolic metrics on 
$S\times \R$ (considered up to isotopy) is in one-to-one correspondence 
with $\cT_S\times \cT_S$, where $\cT_S$ is the Teichm\"uller space of $S$.

\jms{
In his 1990 IHES preprint, published only in 2007 \cite{mess, mess-notes},
G. Mess 
}
discovered a remarkable analog of this 
theorem for globally hyperbolic maximal anti-de Sitter spacetimes:
the space of GHM AdS metrics on $S\times \R$ is also parameterized by 
$\cT_S\times \cT_S$. Both the Bers and the Mess results can be described 
as ``stereographic pictures'': the full structure of a 3-dimensional
constant curvature spacetime is encoded in a pair of hyperbolic metrics on a surface.

To understand this result more precisely, recall that the identity component of the isometry group of
$AdS_3$, $O_0(2,2)$, is isomorphic, up to finite index, to $PSL(2,\R)\times PSL(2,\R)$. 
Given a 3-dimensional globally hyperbolic maximal compact AdS spacetime $M$, it is homeomorphic 
to $S\times \R$, where $S$ is a closed surface which can be chosen to be any
Cauchy surface in $M$. Then $M$ is isometric to $\Omega/\hol(\pi_1(S))$, where 
$\Omega$ is a convex subset of $AdS_3$ and $\hol:\pi_1(S)\rightarrow O_0(2,2)$ is 
a homomorphism, which can be written as $(\hol_l,\hol_r)$ in the identification 
of $O_0(2,2)$ with $PSL(2,\R)\times PSL(2,\R)$. 

\begin{theorem}[Mess \cite{mess}] \label{tm:mess}
The homomorphisms $\hol_l$ and $\hol_r$ 
are holonomy representations of hyperbolic metrics on $S$ and can be identified
with points in the Teichm\"uller space $\cT_S$ of $S$. The map sending $M$
to $(\hol_l,\hol_r)\in \cT_S\times \cT_S$ is a homeomorphism.
\end{theorem}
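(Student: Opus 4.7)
The plan is to follow Mess's original strategy, which relies on the coincidence that $AdS_3$ (more precisely, its double cover) may be identified with the Lie group $PSL(2,\RR)$ equipped with its bi-invariant Killing metric, so that the action of $\isom_0(AdS_3)$ corresponds to left and right multiplication and naturally splits as $PSL(2,\RR)\times PSL(2,\RR)$.

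First I would prove that the two projections of the holonomy have maximal Euler number. Pick any smooth space-like Cauchy surface $S\subset M$ and lift it to $\tilde S \subset \widetilde{AdS}_3 \simeq \widetilde{PSL(2,\RR)}$. Composing the developing map with the two projections $\widetilde{PSL(2,\RR)}\to PSL(2,\RR)/PSO(2)=\HH^2$ yields two maps $\Pi_l,\Pi_r:\tilde S\to \HH^2$ that are equivariant for $\hol_l$ and $\hol_r$ respectively. The space-likeness of $S$ translates into the fact that $\Pi_l$ and $\Pi_r$ are local diffeomorphisms (their differentials are the tangential projections of an AdS-orthonormal frame onto the two factors, and space-likeness implies they are non-degenerate). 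A properness argument then shows each $\Pi_l,\Pi_r$ is a diffeomorphism of $\tilde S$ onto $\HH^2$, so $\hol_l,\hol_r$ are discrete, faithful, and cocompact, hence Fuchsian. Equivalently one invokes Goldman's theorem characterising maximal-Euler-number representations as holonomies of hyperbolic structures.

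Next I would prove injectivity. Given a GHMC AdS spacetime $M$ with holonomy $\hol=(\hol_l,\hol_r)$, one identifies $\partial_\infty AdS_3$ with $\rp^1\times\rp^1$ (the Einstein universe of dimension $2$) so that the projective boundary action of $O_0(2,2)$ is the product action. The graph of the unique equivariant circle homeomorphism $\partial\HH^2\to\partial\HH^2$ conjugating the boundary actions of $\hol_l$ and $\hol_r$ is a space-like topological circle $\Lambda\subset\partial_\infty AdS_3$. The convex hull of $\Lambda$ in $AdS_3$ is acted upon properly and freely by $\hol(\pi_1(S))$, and the invariant domain of dependence $\Omega$ of this convex hull is the universal cover of the maximal GHC extension. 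Thus $M$ is reconstructed from $\hol$ alone, giving injectivity.

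Surjectivity is the main obstacle, and I would handle it as above but starting from an arbitrary $(\rho_l,\rho_r)\in\cT_S\times\cT_S$. One builds the $\rho$-equivariant homeomorphism $\varphi:\partial\HH^2\to\partial\HH^2$ (it exists because both actions are minimal and topologically conjugate via any equivariant map on a fundamental domain), sets $\Lambda_\rho$ to be its graph in the Einstein universe, takes the convex hull and its domain of dependence $\Omega_\rho\subset AdS_3$, and checks that $\rho(\pi_1(S))$ acts freely, properly discontinuously and cocompactly on any Cauchy surface of $\Omega_\rho$. The quotient $\Omega_\rho/\rho(\pi_1(S))$ is a GHMC AdS manifold with holonomy $\rho$. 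The delicate points are (i) verifying that $\Lambda_\rho$ is achronal and that its convex hull stays inside a single affine chart, and (ii) verifying maximality of the resulting manifold, which amounts to showing that the domain of dependence is the largest invariant convex subset of $AdS_3$ on which $\pi_1(S)$ acts properly. Both rely on a careful analysis of the causal structure of $AdS_3$ near its boundary. Finally, continuity of the map and of its inverse follows from the continuous dependence of $\varphi$, of $\Lambda_\rho$, and of the convex hull construction on $(\rho_l,\rho_r)$, so the bijection is a homeomorphism.
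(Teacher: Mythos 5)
The paper does not actually prove this statement: it is quoted from Mess \cite{mess} as background, so there is no internal proof to compare against. Your outline does follow Mess's genuine strategy (the $PSL(2,\R)$ model, the limit set as the graph of a boundary homeomorphism, convex hull and domain of dependence), and the injectivity, surjectivity and continuity parts are reasonable as a sketch at this level of detail; the closest material in the paper is Section \ref{sc:leftright}, which develops precisely the equivariant machinery that your first step needs.

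That first step, however, has a genuine gap in two places. First, the maps $\Pi_l,\Pi_r$ you define by composing the developing map with the coset projections $\widetilde{PSL(2,\R)}\to PSL(2,\R)/PSO(2)$ are not equivariant for the left and right factors: for the isometric action $(a,b)\cdot g=agb^{-1}$ one has $agb^{-1}\cdot PSO(2)\neq a\cdot\bigl(g\cdot PSO(2)\bigr)$ in general, so neither coset projection intertwines $\hol$ with $\hol_l$ or $\hol_r$. The correct equivariant target is the space of timelike geodesics $G(AdS_3)\cong\HH^2\times\HH^2$ (a timelike geodesic is $\{g:\ g\cdot p=q\}$, and $(a,b)$ sends the label $(p,q)$ to $(bp,aq)$), and the correct map from the surface is the Gauss map sending $x$ to the normal geodesic at $x$ --- exactly the maps $\phi^l,\phi^r$ of Lemma \ref{local:lm}. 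Second, and more seriously, spacelikeness of $S$ does not make these maps local diffeomorphisms. Their differentials are $v\mapsto -Bv\pm Jv$, where $B$ is the shape operator and $J$ the complex structure of the induced metric, and $\det(-B\pm J)=1+\det B=-K$, so they are immersions exactly when the induced curvature $K$ of $S$ is negative, i.e.\ $\det B>-1$; an arbitrary smooth Cauchy surface can violate this. You must choose the Cauchy surface to be convex, or more generally ``good'' in the sense of Section \ref{ssc:good}; such surfaces do exist in the nonsingular GHMC setting (for instance near the boundary of the convex core), but that existence is a step you have to supply, and the example $M_{ex}$ in the paper shows that the analogous statement genuinely fails once particles are allowed, so the point is not pedantic. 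Once this is repaired, the completeness/covering argument does give that $\hol_l,\hol_r$ are discrete, faithful and cocompact, and the remainder of your outline goes through. (Your fallback, ``invoke Goldman's theorem,'' is circular as stated: Goldman's theorem converts maximal Euler number into Fuchsianity, whereas here you must first establish one of the two.)
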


\jms{
The key point in the proof given by Mess is to show that
$\hol_l$ and $\hol_r$ have maximal Euler number.}
\fb{By a  celebrated result of Goldman \cite{goldman}, this maximality}
\jms{implies that}
\fb{they are Fuchsian representations.
}

\subsection{Cone singularities}
\label{sub:cone} 

Cone singularities along curves have been studied often in hyperbolic geometry, see
e.g. \cite{CHK,BLP}.
A basic model space is the metric space $\HH^3_\theta, \theta\in (0,2\pi)$, defined
as follows. Let $\Delta$ be a geodesic in $\HH^3$, and let $P,P'$ be two half-planes
bounded by $\Delta$, such that the oriented angle between $P$ and $P'$
is $\theta$. Then $\HH^3_\theta$ is obtained by ``cutting out'' the part of 
$\HH^3$ bounded by $P$ and $P'$ (with angle $\theta$ along $\Delta$) and 
isometrically gluing the boundary half-planes by the isometry which is the
identity on $\Delta$. 

A hyperbolic cone-spacetime with singular locus a link is a metric space where
each point has a neighborhood isometric to a neighborhood of $\HH^3_\theta$, for
some $\theta\in (0,2\pi)$. A key rigidity result for closed 3-dimensional 
cone-spacetimes with singular locus a link and cone angles in $(0,2\pi)$
was proved by Hodgson and Kerckhoff \cite{HK}, and had a profound influence
on hyperbolic geometry in recent years, see e.g. \cite{brock-bromberg-evans-souto}. More recently this
rigidity result was extended to closed hyperbolic cone-spacetimes with 
singular set a graph, see \cite{mazzeo-montcouquiol,weiss:09}

\subsection{AdS spacetimes and particles}

3-dimensional AdS spacetimes were first studied as a lower-dimensional toy model
of gravity: they are solutions of Einstein's equation, with negative cosmological constant
but without matter. A standard way to add physical relevance to this model is to
consider in those AdS spacetimes some point particles, modeled by cone singularities
along time-like lines (see e.g. \cite{thooft1,thooft2}). 
They can be described as we did in Section \ref{sub:cone} in the case of hyperbolic geometry:
the geodesic $\Delta$ has to be a time-like geodesic in $AdS_3$ and $P$, $P'$ are time-like totally geodesic
half-planes bounding $\Delta$, with cone angle $\theta$. If we remove the region bounded by $P$ and $P'$ and 
glue the boundary half-plane by the unique AdS-isometry sending $P$ and $P'$ which is the identity on $\Delta$,
we obtain a spacetime containing a singular line.

Here we will call ``massive particle'' such a cone singularity, of
angle less than $2\pi$, along a time-like line. The condition that the angle is less
than $2\pi$ is usually made by physicists, who consider it as corresponding to the
positivity of mass. Here, as in \cite{colI}, it is also mathematically relevant. 

Globally hyperbolic AdS spaces
with such particles were considered in \cite{cone}, when the cone angles are less than
$\pi$. It was shown that a satisfactory extension of Theorem \ref{tm:mess} exists in this setting, 
with elements of the Teichm\"uller space of $S$ replaced by hyperbolic metrics
with cone singularities, with cone angles equal to the angles at the 
``massive particles''. There are corresponding results in the hyperbolic case \cite{qfmp,conebend}
where the Bers double uniformization theorem extends to quasifuchsian manifolds with ``particles'':
cone singularities along infinite lines, with angle in $(0,\pi)$.
Here, by contrast, we allow cone angles to go all the
way to $2\pi$. This is a crucial difference since it allows massive particles to ``interact''
in interesting ways.

More general types of particles, including cone singularities along time-like
or light-like lines and ``black hole'' singularities, are considered in \cite{colI},
where the reader can find a local description at the interaction points as well as
some global examples and related constructions. Here we focus on massive particles
only, our main goal is to show how the moduli space of globally hyperbolic AdS
metrics with interacting massive particles, on a given 3-spacetime, can be locally
parameterized by finite sequences of pairs of hyperbolic surfaces with cone singularities. This
can be considered as a first step towards an extension of the Bers-type result of Mess \cite{mess} quoted
above. However the collisions between particles \jms{mean} 
that the situation is much richer and more complex 
than for angles less than $\pi$ as considered in \cite{cone}, where no collision occurs.

\jms{
\subsection{Left and right metrics of spacial slices}

A precise definition of the spacetimes with collision considered here is 
introduced in Definition \ref{df:admissible}. It is basically a pile of ``spacial slices'', each
the product of a closed surface by an interval, containing particles but no collision,
so that the collisions occur on the common boundary of two adjacent slices. We require that
each such boundary surface contains a unique collision. In Section \ref{sub:maximal} we
introduce a notion of $m$-spacetime, where $m$ stands for ``maximal'', and prove that every
admissible spacetime embeds in a unique $m$-spacetime satisfying a natural condition 
(Lemmas \ref{mm:lem} and \ref{lm:m-unique}).

In Section \ref{sc:holonomy} we study the holonomy representation of admissible spacetimes. 
We define a notion of admissible holonomy, and prove that small (admissible) deformations of 
an admissible spacetime are parameterized by small (admissible) deformations of its holonomy
representation (Theorem \ref{struct:thm}). 

This leads in Section \ref{sc:leftright} to the analysis of the left and right holonomies
of a spacetime with collisions. In the non-singular setting, those left and right holonomies
can be defined, as in \cite{mess}, using the decomposition of the identity component of 
$SO(2,2)$ as the product of two copies of $PSL(2,\R)$. When cone singularities are present,
however, this viewpoint is useful but sometimes not very convenient. 
In Section \ref{ssc:leftright} we introduce two flat connections on a 3-dimensional AdS manifold
and use them in Section \ref{ssc:lrmetrics} to construct a metric, locally isometric to 
$\HH^2\times \HH^2$, on the space of time-like geodesics in a 3-dimensional AdS manifold.

In Section \ref{ssc:transverse} we define a notion of ``transverse vector
field'' along a space-like surface in an AdS manifold: it is basically a time-like
vector field which behaves well at the particles and does not ``rotate'' too quickly. 
Using such a vector field along a space-like surface $S$, and the metric defined above on 
the space of time-like geodesics, it is possible to define
on $S$ two hyperbolic metrics, with cone singularities of equal angle at the intersection
with the particles (see Proposition \ref{pr:mulr}). 

Those two metrics are called the left and right
hyperbolic metrics of the slice, they do not depend on the choice of $S$ or of the
transverse vector field (see Lemma \ref{lm:surface}) and their holonomy representations are the left and right
components of the holonomy representation of the AdS structure.

This construction based on a transverse vector field is more
general and more flexible than that used in \cite{minsurf,cone}, which used space-like
surfaces with more stringent constraints. The added flexibility is necessary here to
understand how the two metrics change when the surface $S$ moves across a particle interaction.
}

\subsection{Stereographic picture of spacetimes with colliding particles}

\jms{To each AdS spacetime with interacting particles} is associated a sequence (or more precisely 
a graph) of ``spacial slices'', each corresponding to a domain where
no interaction occurs. To each slice we associate 
\jms{as explained above} a ``stereographic
picture'': a ``left'' and a ``right'' hyperbolic metric, both with
cone singularities of the same angles, which together are sufficient
to reconstruct the spacial slice. This construction is described in Section 
\ref{ssc:graph}.

A 
new but apparently natural notion occurs, that
of a ``good'' spacial slice: one containing space-like surfaces 
with a transverse vector field.
There are examples of spacetimes containing a ``good'' spacial slice 
which \jms{stop} being ``good'' after a particle interaction. A GHMC
AdS spacetimes with particles is ``good'' if it is made of good
space-like slices, see Definition \ref{df:good}.

Adjacent spacial slices are ``related'' by a particle interaction. 
In Section \ref{ssc:surgeries} we show that the left and right hyperbolic
metrics before and after the interaction in a good space-time are related by a 
surgery involving, for both the left and right metrics, 
the link of the interaction point: for both the left and right metrics,
a topological disk is found, isometric to a large enough disk in the past
component of the link of the interaction point, and each of those disks
is replaced (in a compatible way) by a large enough disk in the future
component of the link of the interaction point --- see 
Definition \ref{df:double} \jms{and Proposition \ref{pr:collision}}. Since the same 
surgery is done on
both the left and the right hyperbolic metrics, we use the term
``double surgery''.

As a consequence, to a good AdS space-time with particles, we can associate
two distinct pieces of information:
\begin{itemize}
\item a ``topological data'', namely the position of the singular graph
(the particles along with the interaction points) in the spacetime,
\item a ``geometric data'', where to each spatial slice is associated
a pair of hyperbolic surfaces with cone singularities (a ``stereographic picture''), 
and to each interaction point is associated a double surgery (see Definition \ref{df:geometric}).
\end{itemize}
This is developed in Section \ref{ssc:structure}.

\subsection{The stereographic picture is a complete description (locally)}

In Section \ref{sc:final} we show that this locally provides a complete description of
possible AdS spaces with interacting massive particles, i.e., 
given an AdS metric $g$ with interacting particles, a small neighborhood 
$g$ in the space of AdS metrics with interacting particles with the same singular graph is parameterized
by the admissible deformations of the topological geometric data associated
to the spacial slices. This is Theorem \ref{tm:main}, which can \fb{be informally}
formulated here as follows.

\begin{theorem} \label{tm:main-intro}
Let $M$ be an admissible AdS spacetime with interacting particles, and let $(T,G)$ be
the associated topological and geometric data. Then $M$ is uniquely determined
by $(T,G)$. Moreover, any small enough deformation of $G$ corresponds to 
an admissible AdS spacetime with interacting particles close to $M$, with the same
topological data $T$. 
\end{theorem}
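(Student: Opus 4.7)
The plan is to reduce the theorem to two separate local parameterization statements: one for each GHM AdS slab between consecutive interaction events, and one for the way adjacent slabs are glued at each such event. This decomposes the global rigidity+deformation problem for $M$ into a finite family of sub-problems each of which has already been essentially solved earlier in the paper, and then reassembles them.

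\emph{Uniqueness.} Writing $M$ as the union of finitely many collision-free GHM AdS slabs $M_1, \ldots, M_n$ separated by the interaction events listed in $T$, the geometric data $G$ provides, for each slab $M_i$, its stereographic picture $(g_i^l, g_i^r)$: a pair of hyperbolic cone-metrics on a Cauchy surface with cone angles equal to those of the particles crossing $M_i$. By the slab-wise Mess-type correspondence built in Sections \ref{ssc:graph} and \ref{ssc:structure} (which, in the collision-free case, extends the analysis of \cite{cone} to angles in $(0,2\pi)$ using the goodness hypothesis of Definition \ref{df:good}), each pair $(g_i^l, g_i^r)$ determines $M_i$ up to isometry. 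For each interaction event the associated double surgery prescribes, simultaneously on the left and right copies, the identification of a topological disk in the past component of the link with a disk in the future component (Section \ref{ssc:surgeries}). Together with $T$, which records how slabs meet at the various events, the double surgeries uniquely determine the gluing maps between consecutive $M_i$ and $M_{i+1}$, and hence recover $M$.

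\emph{Local surjectivity.} Given a small deformation $G'$ of $G$ with the same topological data $T$, the inverse of the stereographic picture --- a local homeomorphism near the original pair --- produces a slab $M_i'$ close to $M_i$ for each $i$. The deformed double surgeries yield gluing prescriptions between the boundaries of consecutive $M_i'$. For sufficiently small deformations these prescriptions remain admissible, in the sense that the disks can still be cut and re-glued to produce a point whose neighborhood is locally modeled on one of the interaction models of \cite{colI}; assembling the slabs along these gluings yields an admissible AdS spacetime $M'$, with topological data $T$ and geometric data $G'$, close to $M$.

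\emph{Main obstacle.} The technically most delicate point is the openness of the admissibility of the double surgeries: one must show that if the pairs $(g_i^l,g_i^r)$ and $(g_{i+1}^l,g_{i+1}^r)$ together with the surgery data are perturbed consistently but slightly, then the cut-and-paste construction still produces a well-defined AdS interaction point of the prescribed combinatorial type, and the goodness condition that was used to define $G$ in the first place is preserved throughout the deformation. Concretely, this amounts to an implicit-function-theorem argument in the finite-dimensional space of local models for interactions, combined with a careful matching of the past and future links whose isometry type is itself moving with the deformation. Once this openness is in place, the slab-wise parameterization immediately yields both the uniqueness and the local-surjectivity halves of the theorem.
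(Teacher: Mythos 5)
Your decomposition into collision-free slabs glued at interaction events is the right picture, but the proposal leaves unproved exactly the two points on which the paper's argument actually turns. First, the ``slab-wise Mess-type correspondence'' you invoke for cone angles in $(0,2\pi)$ does not exist in the paper as a global statement: \cite{cone} covers only angles less than $\pi$, and Sections \ref{ssc:graph}--\ref{ssc:structure} merely \emph{define} the topological and geometric data. What is available is only a local statement, and it is obtained by passing through the holonomy: the pair $(\mu_l,\mu_r)$ determines the pair of representations $(\hol_l,\hol_r)$, which by Lemma \ref{lm:rho} is the holonomy of the slab, and Theorem \ref{struct:thm} (the Ehresmann--Thurston-type statement that the holonomy map $\struct\to\mathcal R(g,T,\theta)$ is a local homeomorphism onto the space of \emph{admissible} representations) then recovers the structure locally. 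This is why Theorem \ref{tm:main} is stated only ``up to shrinking $\mathcal U$''; your uniqueness paragraph asserts a global determination of each $M_i$ by $(g_i^l,g_i^r)$ that is not established anywhere.

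Second, and more seriously, your ``main obstacle'' paragraph is the actual theorem, and the implicit-function-theorem sketch does not engage with it. The question is: why does a perturbed pair of double-surgery data still produce a genuine interaction point of $AdS_3$-geometry? The paper's answer is not an IFT in a space of local models but a holonomy computation: it builds a map $H:\mathcal D(X)\to\mathcal R(g,T,\theta)$ by amalgamating (via Van Kampen) the holonomies of the left and right metrics of consecutive slices, and the burden falls on showing that $H$ lands in the \emph{admissible} representations of Definition \ref{df:adrep}, i.e.\ that the restriction to $\pi_1(\Sigma_{p,reg})$ of the link of each would-be collision is conjugate into the diagonal of $PSL(2,\R)\times PSL(2,\R)$ and hence fixes a point of $AdS_3$. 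This is exactly where condition (3) of Definition \ref{df:double} (the left and right identification maps agree near the collapsed point, $\tilde R=\mathrm{Id}$) is consumed; drop it and the construction produces left and right representations that do not fix a common point, so no collision point exists (cf.\ the twisted example after Definition \ref{df:double} and the remark after Proposition \ref{pr:collision}). Your proposal never mentions this condition or explains why ``the prescriptions remain admissible,'' so the step that makes the reassembly possible is missing. Once $H$ is shown to be injective (via Lemma \ref{lm:holonomy}), open, and admissible-valued, the factorization $\hol=H\circ GD$ together with Theorem \ref{struct:thm} delivers both halves of the statement at once; that is the route you need to follow rather than gluing slabs directly.
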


This statement is obviously informal since we did not introduce yet a number of
notions necessary to make it precise, in particular concerning the space of 
topological and geometric data, etc. 
The precise form of the statement can be
found below as Theorem \ref{tm:main}.

In the non-singular case (Theorem \ref{tm:mess}) the parameterization of the
space of GHMC AdS spacetimes by 2-dimensional data --- a pair of hyperbolic metrics ---
is not only local, but global. It is of course natural to wonder whether it could be
possible to extend Theorem \ref{tm:main} to a global existence theorem of an
admissible AdS spacetime with particles having a given topological and geometric
data. This question leads to new issues that we do not consider here.

\subsection{Contents}

The exposition below goes from the more general arguments to those tailored more
specifically for AdS spacetimes with interacting particles. 

In Section \ref{sc:space} we define the notion of admissible AdS spacetime with
particles occuring in Theorem \ref{tm:main-intro}, and prove basic statements on 
the extension of isometries on AdS spacetimes with interacting particles. 

In Section \ref{sc:holonomy} we consider more specifically the holonomy representation of admissible
AdS spacetimes with particles. The main result is Theorem \ref{struct:thm}, which states
that small deformations of the AdS structure are in one-to-one correspondence to ``admissible''
deformations of the holonomy representation.

In Section \ref{sc:leftright}, we define a notion of good spacelike slice
and define the left and right hyperbolic metrics associated to such a slice. 
We then consider more specifically the properties of those left and right
metrics in relation with a collision point in the boundary of the
spacelike slice.

Section \ref{sc:surgeries} deals with the change in the left and right metrics
when a collision happens. The central notion of double surgery
is introduced there. It is then possible to define precisely the topological
and geometric data associated to an AdS spacetime with interacting particles.

Section \ref{sc:final} contains the main result of the paper, Theorem \ref{tm:main} (which is the
same as Theorem \ref{tm:main-intro} but stated more precisely).

The appendix contains a more technical development which is not necessary 
for the proof of the main result but which should clarify, for the more
interested readers, the definition of a double surgery; it shows why this
definition, which could \fb{at} a first sight appear more complicated than necessary,
is actually relevant.

\jms{
\subsection*{Acknowledgements}

We are grateful to two anonymous referees for many helpful comments, which lead to 
a much better exposition.
}

\section{The space of maximal spacetimes with collisions}
\label{sc:space}

\subsection{Singular AdS spacetimes}

This paper is to some extend a continuation of \cite{colI}, where we studied the geometry of
3-dimensional AdS spacetimes with interacting particles. The particles considered in
\cite{colI} are more general than those under consideration here, since they are cone
singularities on a space-like, a light-like, or a time-like curve, as well as more
exotic objects (black holes or white holes). Here by contrast we only consider massive
particles, that is, cone singularities along time-like segments.

However we will rely at some point on the analysis made in \cite{colI} of the geometry
near an ``interaction of particles'', that is, 
a vertex of the singular graph. 
Let us briefly recall here a simplified version of the notion of HS-surfaces introduced in \cite{colI}, suited to the purpose
of the present paper, which allows to describe collisions of massive particles.
Let $p$ be a point in $\AdS_3$. The tangent of space $T_p\AdS_3$ is
a copy of Minkowski space $ \RR^{1,2}$. The link $L(p)$ at $p$ is the space of non-oriented geodesic rays
based at $p$; it is naturally identified with the space $\HS^2$ of half-lines in the vector space $\RR^{1,2}$. It admits
a natural decomposition in five subsets:
\begin{itemize}
\item the domains $\HH^2_+$ and $\HH^2_-$ comprising respectively future oriented and past oriented
time-like rays,
\item the domain $\dS^2$ comprising space-like rays,
\item the two circles $\partial\HH^2_+$ and $\partial\HH^2_-$, boundaries of $\HH^2_\pm$ in $\HS^2$.
\end{itemize}

The domains $\HH^2_\pm$ are notoriously Klein models of the hyperbolic plane,
and $\dS^2$ is the Klein model of de Sitter space of dimension $2$. The group $\SO_0(1,2)$,
i.e. the group of of time-orientation preserving and orientation preserving isometries of
$\R^{1,2}$, acts naturally (and projectively) on $\HS^2$, preserving this decomposition.

\begin{defi}
A HS-surface is a topological surface endowed with a $(\SO_0(1,2), \HS^2)$-structure.
\end{defi}

A HS-surface admits a decomposition in hyperbolic and de Sitter regions, delimited by lines or circles
of \textit{photons}, corresponding to the circles $\partial\HH^2_\pm$ in $\HS^2$.

We now define the notion of \textit{singular $\HS$-surface.}
Since here we only consider massive particles, we can restrict the definition given in \cite{colI} and adopt the following
definition:

\begin{defi}
A singular HS-surface is a topological surface $\Sigma$ containing a finite subset $P = \{p_1, ... ,p_k\}$ (the \textit{singular points})
such that the regular part $\Sigma_{reg} = \Sigma \setminus P$ is a HS-surface. Moreover, we require that every singular point $p_i$ admits an open
neighborhood $U_i$ such that $U_i \setminus \{ p_i\}$ lies in 
the hyperbolic region of $\Sigma$, and is isometric to the neighborhood of the singular point in 
$\HH^2_{\theta_i}$ for some $\theta_i$ in $[0, 2\pi]$.
\end{defi}

Given a singular HS-surface $\Sigma$ homeomorphic to the sphere $\mathbb{S}^2$, one can construct a $3$-manifold $e(\Sigma)$ containing a closed
subset $\cL$ (the singular locus) such that $e(\Sigma) \setminus \cL$ is a regular AdS-spacetime, and such that $\cL$ is the union of a single point $p_0$ (the
\textit{collision point}) and singular rays which are massive particles based at $p_0$. More precisely, $\Sigma$ can be interpreted as the space
of geodesic rays starting from $p_0$, every singular point $p_i$ corresponding to a massive particle beginning or finishing at $p_0$.

Mathematically speaking, the singularity along each singular ray in $\cL$ is a cone singularity along a timelike line. If the cone angle is less than $2\pi$ there is a simple way to describe this singularity.
Consider the region of the Anti de Sitter space $U$ bounded by two timelike half-planes that meet
along a time-like geodesic $l$ and that form an angle $\theta$. Then the space obtained by gluing
the faces of $U$ by a rotation around $l$ is the model of a particle of cone angle $\theta$.

Here we require all the masses to be positive, meaning that every $\theta_i$ is less than $2\pi$. According to \cite[Theorem 5.6]{colI}, and due to our restrictions here,
there are only two possibilities:

\begin{itemize}
\item $\Sigma$ has no de Sitter region, i.e. is reduced to its hyperbolic region. If this hyperbolic region is future (i.e. develops into $\HH^+$), then
$e(\Sigma)$ is contained in the future of $p_0$, which can be interpreted as a ``Big Bang'' singularity. If not, then $p_0$ is a ``Big Crunch'' singularity.
\item $\Sigma$ has one future hyperbolic region and a past hyperbolic region, both homeomorphic to the disc, and are connected by a unique
de Sitter region homeomorphic to the annulus. The rays contained in the past of $p_0$ correspond to the elements of
$P$ lying in the future hyperbolic component; they are massive particles colliding at
$p_0$. This collision produces a new set of massive particles which are the singular rays in the future of $p_0$.
\end{itemize}

Figure \ref{fig.collision} 
illustrates the second situation: it represents the collision of two massive particles producing only one
massive particle.

\begin{figure}[ht]
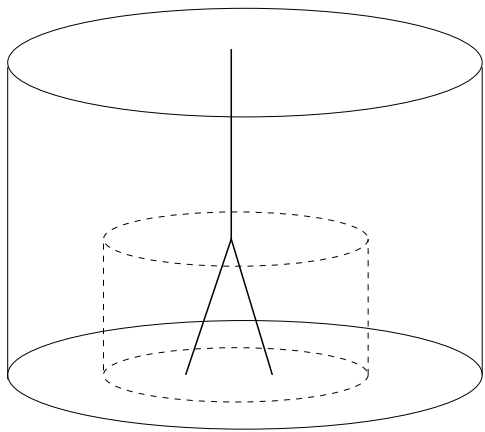

\input interaction.pstex_t
\caption{A collision of two particles.}
\label{fig.collision}
\end{figure}

\begin{remark}
Let $\theta_1$, $\theta_2$ denote the cone angle of the two massive particles in the past, and let $\theta$
be the cone angle of the massive particle in the future. Observe that the holonomy around the future singular point
is a rotation of angle $\theta$, that must be equal to the composition of one rotation of angle $\theta_1$ and a rotation of
angle $\theta_2$ \textit{with distinct centers of rotation.} Hence we have the inequality $\theta < \theta_1 + \theta_2$, which
could at first glance appears as a violation of the conservation of mass. Actually there is no paradox here and this phenomena
is well-known by physicists, the point is that the preserved quantity is the energy-momentum. Here we don't develop further
this kind of consideration, and refer (for example) to \cite[section 3]{matschull}.
\end{remark}

\subsection{Globally  hyperbolic AdS spacetimes with particles}

In \cite{colI} we did a detailed study of the notion of global hyperbolicity in the case of AdS-spacetimes with particle.
We proved in particular (Proposition 6.24) that a globally hyperbolic AdS-spacetime with particles and \textit{no interaction}
(i.e. without collision) admits a maximal globally hyperbolic extension, which is unique up to isometry. As pointed out
in \cite[Remark 6.25]{colI}, this result is completely false if one allows collisions. Indeed, once a collision occurs, there
is no way to predict how many particles will be produced by the collision, and in which direction they will propagate (except
in the special case where only one particle is produced). Hence, Proposition 6.24 in \cite{colI} must be \fb{understood} as a result
\jms{on} the uniqueness of maximal extensions of globally hyperbolic spacetimes, \textit{as long as collisions do not occur.} Such
a spacetime, which can be maximal among spacetimes without collision, may still be extended in a bigger globally hyperbolic
spacetime, but where some collision must happen, \jms{and what happens after the collision is not uniquely determined.}

The point here is that given a AdS spacetime $M$ with interacting particles and a Cauchy surface $S$ in $M$ containing no
collision, there is a unique maximal globally hyperbolic spacetime $M(S)$ with particles but containing no collision, which must coincide
in the neighborhood of $S$ to a neighborhood of $S$ inside $M$. 
This observation will be crucial in the section~\ref{sub:maximal} where we introduce the notion of $m$-spacetime.

\subsection{Admissible AdS spacetimes with interacting particles}

We consider in this paper  AdS spacetimes with collisions of particles, defined as
pairs $(M, T)$ where
\begin{itemize}
\item $M$ is a differentiable manifold and $T$ is a  closed graph embedded in $M$,
\item a smooth AdS metric is defined on $M_{reg}=M\setminus T$,
\item  each edge of $T$ is a massive particle, that is a cone singularity along a time-like curve;
\item each vertex of $T$ is a collision. This means that a neighborhood of each vertex isometrically
embeds in a collision model $e(\Sigma)$ for some HS-surface $\Sigma$. 
\end{itemize}

Notice that we do not consider here the cases where $e(\Sigma)$ is a Big Bang or a Big Crunch. 


Given an AdS spacetime with particles $(M, T)$, we define an isotopy in $(M,T)$ as a
homeomorphism $\phi:M\rightarrow M$ such that there exists a one-parameter
family $(\phi_t)_{t\in [0,1]}$ of homeomorphisms from $M$ to $M$, with
$\phi_0=Id, \phi_1=\phi$, such that each $\phi_t$ sends the singular set of
$M$ to itself. Two domains in $M$ are isotopic if there is an isotopy relative to $T$ sending
one to the other.

We will assume that the cone angle at each edge is in $(0,2\pi)$ (positivity of
the mass) and that the the link of any vertex of $T$ is an $HS$-surface with positive mass
(see Definition 4.2 of \cite{colI}).

We will  also require some natural causal properties.
First we will consider the case where $M$ is topologically the product $S_g\times\mathbb R$, where
$S_g$ is a closed surface of genus $g$. We require that
there is a sequence of embedded closed space-like surfaces
$S_1, S_2,\ldots, S_n$ in $M$ which do not meet the vertices of $T$ such that:
\begin{itemize}
\item $S_{k+1}$ is contained in the future of $S_k$,
\item the region bounded by $S_k$ and $S_{k+1}$ contains exactly one collision,
\item the past of $S_1$ and the future of $S_n$ do not contain any collision point,
\item inextendible causal curves meet every $S_k$ at one point.
\end{itemize}

Since the future and the past of each  point of $T$  is defined, curves ending at $T$
are extendible. Moreover, since it is possible to construct an inextendible causal curve
contained in $T$, $T$ meets each surface $S_k$.

\begin{defi} \label{df:admissible}
\label{def:admissible}
An AdS spacetime with collisions is {\bf admissible} if it satisfies the conditions above.
\end{defi}

\begin{remark}
In general, the sequence $S_k$ is not unique up to isotopies of the pair $(M, T)$.
However $S_1$ and $S_n$ are uniquely
determined up to isotopy by the property that the past of $S_1$ and the future of $S_n$
do not contain collision points.
\end{remark}

\subsection{Maximal spacetimes}
\label{sub:maximal}

Let $M$ be an admissible  spacetime with \jms{collisions}.
Let us consider a space-like surface
 $S_i$ (resp. $S_f$) in $M$
such that the past of $S_i$  and the future of $S_f$
do not contain any collision point.

The past of $S_i$ in $M$ can be thickened to a  maximal globally
hyperbolic spacetime without collision containing $\slice_i$, say $M_i$ .
Analogously the past of $S_f$ can be thickened to a maximal globally hyperbolic
spacetime without collision, say $M_f$. \jms{Note that the notion of maximal globally
hyperbolic spacetime without collision used here is taken from \cite{colI}, that is,
it corresponds to a spacetime which extends up to the point where a collision 
between the particles is bound to happen. The existence of the maximal globally
hyperbolic spacetimes $M_i$ and $M_f$ follow from \cite[Proposition 6.24]{colI}.}

The maximality of $M_i$ shows that
$I^-_M(\slice_i)$ isometrically embeds in $M_i$. In general its image
in $M_i$ is contained in the past of $\slice_i$ in $M_i$, but does not
coincide with it.

We say that $M$ is a $m$-spacetime, if the following conditions hold.
\begin{itemize}
\item $I^-_M(\slice_i)$ isometrically embeds in $M_i$ and coincides with
$I^-_{M_i}(\slice_i)$,
\item $I^+_M(\slice_f)$ isometrically embeds in $M_f$ and coincides with
$I^+_{M_f}(\slice_f)$.
\end{itemize}

\begin{lemma}\label{mm:lem}
Every  admissible spacetime with \jms{collisions} embeds in a $m$-spacetime.
\end{lemma}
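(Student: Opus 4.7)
The plan is to construct the $m$-spacetime $\tilde M$ by gluing, at the past end, the full maximal extension $M_i$ to $M$ along the isometric embedding $\iota_i:I^-_M(\slice_i)\hookrightarrow M_i$, and symmetrically, at the future end, gluing $M_f$ along $\iota_f:I^+_M(\slice_f)\hookrightarrow M_f$. Formally, $\tilde M$ is the pushout of the diagram $M_i\leftarrow I^-_M(\slice_i)\sqcup I^+_M(\slice_f)\rightarrow M$ (and similarly including $M_f$), topologised as a quotient of the disjoint union. By construction $M_i$ and $M_f$ are non-singular (their existence follows from the fact that $I^-_M(\slice_i)$ and $I^+_M(\slice_f)$ are non-singular globally hyperbolic AdS spacetimes, the collision hypothesis excluding singular points from these regions), so the singular graph of $\tilde M$ is exactly $T$, and the same sequence $\slice_1,\ldots,\slice_n$ of Cauchy surfaces witnesses that $\tilde M$ is again admissible.

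Next I would verify that the quotient is Hausdorff, which is the main obstacle. The potential issue is that a sequence in $\iota_i(I^-_M(\slice_i))\subset M_i$ could a priori converge both to a limit in $M_i\setminus\iota_i(I^-_M(\slice_i))$ and, via the gluing, to a limit in $M\setminus I^-_M(\slice_i)$. To rule this out, I would use that $\slice_i$ is a Cauchy surface in $M_i$ and is also contained in the (topological) boundary of $I^-_M(\slice_i)$ in $M$; the isometric embedding $\iota_i$ then extends continuously to a neighborhood of $\slice_i$ in $M$ (take a collar neighborhood of $\slice_i$ consisting of non-singular GH AdS; by maximality of $M_i$ it embeds isometrically into $M_i$ and must agree with $\iota_i$ on the overlap). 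Consequently the closures of $\iota_i(I^-_M(\slice_i))$ in $M_i$ and of $I^-_M(\slice_i)$ in $M$ are canonically identified, yielding Hausdorffness.

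Once $\tilde M$ is a well-defined admissible spacetime with collisions, I would verify the two $m$-spacetime conditions. Because the gluing does not modify a neighborhood of $\slice_i$, the maximal non-singular GH extension of this neighborhood in $\tilde M$ is still $M_i$; and by construction the past of $\slice_i$ in $\tilde M$ is precisely $I^-_{M_i}(\slice_i)$, embedded isometrically. The symmetric statement for $\slice_f$ and $M_f$ is proved in the same way.

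The main difficulty, as indicated, is the Hausdorff verification: one must show that the two ``sides'' of $\slice_i$ in the gluing fit together without producing branched points. Everything else reduces to unwinding definitions, together with the uniqueness-up-to-isometry of the maximal non-singular GH extension, which allows one to recognise $M_i$ (resp.\ $M_f$) intrinsically inside $\tilde M$.
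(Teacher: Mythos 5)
Your overall strategy --- attaching the maximal collision-free developments $M_i$ and $M_f$ at the two ends of $M$ --- is the same as the paper's, and your verification of the two $m$-spacetime conditions at the end is in the right spirit. But there is a genuine error in the construction itself: you glue the \emph{full} spacetime $M_i$ to $M$ along $\iota_i\colon I^-_M(S_i)\hookrightarrow M_i$. Since $M_i$ is the maximal globally hyperbolic development of $S_i$ \emph{without collisions}, it has a nonempty future side $I^+_{M_i}(S_i)$, which near $S_i$ is isometric to a neighborhood of $S_i$ in $I^+_M(S_i)$ but is nowhere identified with it by your pushout (the identification is carried out only over the past of $S_i$). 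The resulting quotient therefore carries two distinct ``futures'' of $S_i$ --- one containing the collisions (coming from $M$) and one without (coming from $M_i$) --- branching off the same past. This makes the quotient non-Hausdorff along $S_i$, and your proposed repair (extending $\iota_i$ over a collar of $S_i$ and identifying closures) does not fix it: it merely pushes the branch locus to the future boundary of the collar, since $M$ and $M_i$ genuinely disagree once the first collision is reached, so no collar identification can absorb the duplicated future. The same problem occurs symmetrically at $S_f$.

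The paper avoids this by gluing in only $V_i:=I^-_{M_i}(U_i)$, the past of $U_i=I^-_M(S_i)$ inside $M_i$ (and symmetrically $V_f$, the future of $U_f$ in $M_f$). These sets add new points only to the past of $S_i$ (resp.\ the future of $S_f$), so no duplication of the future of $S_i$ occurs and the quotient is an honest spacetime. With that single correction your argument goes through essentially as the paper's (which is itself only a sketch). A minor additional point: $M_i$ and $M_f$ are not non-singular --- they still contain the particles crossing $S_i$ and $S_f$ --- they are merely free of collision points.
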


\begin{proof}
Let $\slice_i$  and $\slice_f$ be space-like surfaces as above
 and denote by $U_i$ (resp. $U_f$) the past
(resp. the future) of $\slice_i$ (resp. $\slice_f$) in $M$.  Clearly $U_i$
embeds in $M_i$. Let $V_i$ be the past of $U_i$ in $M_i$
(and analogously let $V_f$ be the future of $U_f$ in $M_f$).

Then $V_i$ and $V_f$ can be glued to $M$ by identifying $U_i$ to its
image in $V_i$ and $U_i$ to its image in $U_f$.  The spacetime obtained
in this way, say $M'$, is clearly a $m$-spacetime.
\end{proof}

Let $M$ be an admissible spacetime with \jms{collisions} and $T$ be its singular locus.
We say that $M$ is maximal if every isometric embedding
\[
    (M,T)\rightarrow (M',T')
\]
that restricted to $T$ is a bijection with the singular locus $T'$ of
$M'$, is actually an isometry.

\begin{lemma} \label{lm:m-unique}
Every $m$-spacetime is maximal. Every admissible spacetime $(M,T)$ isometrically
embeds in a unique $m$-spacetime $(M',T')$ such that each vertex of $T'$
is the image of a vertex of $T$.
\end{lemma}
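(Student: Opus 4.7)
The plan is to split both assertions according to a past/middle/future decomposition relative to the surfaces $\slice_i$ and $\slice_f$, and to bootstrap from the analogous statements in the collision-free setting. Suppose first that $(M,T)$ is an $m$-spacetime and $\phi\colon(M,T)\to(M',T')$ is an isometric embedding with $\phi(T)=T'$; I want to show $\phi$ is onto. Because the bijection $\phi(T)=T'$ forces every vertex of $T'$ to come from a vertex of $T$, and all such vertices lie strictly between $\slice_i$ and $\slice_f$, the open set $U'=I^-_{M'}(\phi(\slice_i))$ meets $T'$ only along time-like particle edges (no collisions). I would first verify that $U'$ is itself globally hyperbolic with particles, so that by the maximality of GHM non-colliding AdS spacetimes with particles (recalled from \cite{colI}) it admits an isometric embedding into $M_i$ extending the inclusion of $I^-_M(\slice_i)$. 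The $m$-spacetime hypothesis $I^-_M(\slice_i)=I^-_{M_i}(\slice_i)$ then forces $U'=\phi(I^-_M(\slice_i))$, so nothing is added in the past of $\phi(\slice_i)$. The symmetric argument with $M_f$ handles the future.

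It remains to show that the intermediate slab $W=I^+_M(\slice_i)\cap I^-_M(\slice_f)$ already fills the corresponding region of $M'$. For this I would slice $W$ by the surfaces $\slice_1,\dots,\slice_n$ so that each sub-slab contains exactly one collision, and argue inductively: once $\phi(\slice_k)$ is identified with the $k$-th Cauchy surface in $M'$, the Cauchy data on $\phi(\slice_k)$ together with the HS-link of the unique collision in the sub-slab determine that sub-slab isometrically (the local model near a collision is rigidly determined by its HS-link, cf.\ \cite{colI}); hence the sub-slab of $M'$ coincides with the image under $\phi$ of the corresponding sub-slab of $M$. Iterating yields surjectivity on $W$, completing the proof of maximality.

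For the second claim, existence of an $m$-envelope is Lemma \ref{mm:lem}, whose construction only prepends $V_i$ and appends $V_f$, neither of which introduces a new vertex of the singular graph. For uniqueness, if $(M,T)$ embeds into two $m$-spacetimes $(M'_1,T'_1)$ and $(M'_2,T'_2)$ preserving vertices, then in each $M'_j$ the past of $\slice_i$ is canonically $I^-_{M_i}(\slice_i)$ (by the $m$-spacetime defining property together with the maximality of $M_i$ in the non-colliding class), the future of $\slice_f$ is canonically $I^+_{M_f}(\slice_f)$, and the middle slab is canonically $W\subset M$. Gluing these three canonical pieces along $\slice_i$ and $\slice_f$ in the unique way extending the identity of $M$ produces an isometry $M'_1\to M'_2$.

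The step I expect to be the main obstacle is the inductive propagation through the middle slab: the standard Cauchy-development argument is routine inside each region of smooth or particle-only geometry, but crossing a collision vertex requires the local rigidity of interactions via their HS-links from \cite{colI}, and some care is needed to check that global hyperbolicity of $M'$ (with particles) restricts correctly to each sub-slab so that the induction actually runs. Once that local rigidity is granted, the rest amounts to bookkeeping with the $m$-spacetime condition together with maximality of GHM non-colliding AdS spacetimes with particles.
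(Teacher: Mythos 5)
Your overall architecture --- splitting $M'$ into the past of $\iota(\slice_i)$, the future of $\iota(\slice_f)$, and the middle slab, handling the two ends via the maximality of the collision-free MGH spacetimes $M_i$, $M_f$ together with the defining property $I^-_M(\slice_i)=I^-_{M_i}(\slice_i)$, and getting uniqueness by gluing canonical pieces --- is exactly the paper's (the paper phrases it as a universal property: every embedding $\iota:(M,T)\to(M',T')$ factors through the $m$-extension $M_m$ of Lemma \ref{mm:lem}, which gives maximality and uniqueness simultaneously). The genuine divergence is in the middle slab, and there your proposal has a gap where the paper has a one-line argument.

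For the slab $\Omega=I^+_M(\slice_i)\cap I^-_M(\slice_f)$ the paper does \emph{not} induct across collisions or invoke any rigidity of the local interaction models. It observes that $\overline\Omega$ and $\overline{\Omega'}=\overline{I^+_{M'}(\iota(\slice_i))\cap I^-_{M'}(\iota(\slice_f))}$ are both compact and homeomorphic to $\slice\times[0,1]$, that $\iota$ maps $\overline\Omega$ into $\overline{\Omega'}$ (isometric embeddings preserve causal relations) and $\partial\Omega$ \emph{onto} $\partial\Omega'$; invariance of domain makes $\iota(\Omega)$ open in $\Omega'$, compactness makes it closed, and connectedness forces $\iota(\Omega)=\Omega'$. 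Your replacement --- propagating Cauchy data sub-slab by sub-slab and asserting that ``the Cauchy data on $\phi(\slice_k)$ together with the HS-link of the unique collision determine that sub-slab isometrically'' --- is the step you yourself flag as the main obstacle, and rightly so: that uniqueness-of-development statement across a collision vertex is not proved in the paper or in the quoted parts of the companion paper, and even granting it you would still need to know that the corresponding sub-slab of $M'$ lies in the Cauchy development of $\iota(\slice_k)$ before the rigidity applies, which is essentially the surjectivity you are trying to prove. The hypothesis that $\iota$ is bijective on singular loci already pins every collision of $M'$ inside $\iota(\Omega)$, so no geometric rigidity is needed at all; replace your induction by the topological argument and the rest of your proof goes through as written.
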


\begin{proof}
We sketch the proof, leaving details to the reader.

Let $(M,T)$ be an admissible spacetime and let $\pi:(M,T)\rightarrow(M_{m},
T_{m})$ be the $m$-extension constructed in Lemma \ref{mm:lem}.

We need to prove that given any isometric embedding
\[
    \iota:(M,T)\rightarrow (M',T')
\]
which restricts to a bijection between $T$ and $T'$,
there is an embedding $\pi':(M',T')\rightarrow (M_{m}, T_{m})$
such that $\pi=\pi'\circ\iota$.

Let $\slice_i,\slice_f$ be space-like surfaces in $M$ as in Lemma
\ref{mm:lem}.  The embedding $\iota$ identifies $\slice_i$ and
$\slice_f$ with disjoint space-like surfaces in $M'$ (that with some
abuse we will still denote by $\slice_i,\slice_f$). Moreover
$\slice_i$ is in the past of $\slice_f$ in $M'$. Since $\iota$ bijectively sends $T$
to $T'$, the past of $\slice_i$ in $M'$ and the future of $\slice_f$ in $M'$
do not contain collision points and are globally hyperbolic domains.

Now the closure of the domains $\Omega=I^+_M(\slice_i)\cap
I^-_M(\slice_f)$ and $\Omega'=I^+_{M'}(\slice_i)\cap I^-_{M'}(\slice_f)$ are
both homeomorphic to $\slice\times[0,1]$ and $\iota$ sends
$\overline\Omega$ to $\overline\Omega'$ and $\partial\Omega$ onto
$\partial\Omega'$. A standard topological argument shows that
$\iota(\Omega)=\Omega'$.

Finally if $M_i$ (resp. $M_f$) denotes the GHMC spacetime without collisions containing
$\slice_i$ (resp. $\slice_f$), we have that $I^-_{M'}(\slice_i)$
(resp. $I^+_{M'}(\slice_f)$) embeds in $I^-_{M_i}(\slice_i)$
(resp. $I^+_{M_f}(\slice_f)$).

Thus we can construct the map $\pi':M'\rightarrow M_{m}$ in such a way that
\begin{itemize}
\item on $I_{M'}^+(\slice_i)\cap I_{M'}^-(\slice_f)$ we have $\pi'=\iota^{-1}$;
\item on $I_{M'}^-(\slice_i)$ it coincides with the embedding in
  $I^-_{M_i}(\slice_i)$;
\item on $I_{M'}^+(\slice_f)$ it coincides with the embedding in
  $I^+_{M_f}(\slice_f)$.
\end{itemize}
\end{proof}

\subsection{The deformation space of maximal spacetimes}

In this section we introduce the space of deformations of a spacetime with collisions.
Let us fix $g$, an oriented graph $T$ in $\slice_g\times\mathbb R$
such that non-compact edges are properly embedded, and a family
of numbers $\theta=(\theta_e)_{e\in T_1}$, where $T_1$ is the set of
edges of $T$.

We consider the space $\tilde\Omega$ of
maximal admissible AdS-structures
on $\slice_g\times\mathbb R$ with
singular locus $T$ such that
\begin{itemize}
\item every edge $e$  is a particle of angle $\theta_e$;
\item the orientation of $e$ agrees with the time-orientation induced by
 the AdS structure;
\item every vertex of $T$ admits a neighborhood in $M$ which embeds in
$e(\Sigma)$ for some HS-surface $\Sigma$ 
\end{itemize}


We denote by $\struct$ the   quotient space: an element of
$\struct$ is  a singular metric with the properties
described above, up to isotopies relative to $T$.



There is a natural forgetful map from $\struct$ to the set of
AdS structures on $(\slice_g\times\mathbb R)\setminus T$ up to
isotopy.
Proposition \ref{ext:prop} ensures that  this map is injective, so $\struct$ can be identified
to a subset of the space of anti-de Sitter structures on $\slice\times\mathbb
R\setminus T$.  Thus $\struct$ inherits from this
structure space a natural topology (see \cite[Section 1.5]{canary-epstein-green} for a
discussion on the topology of the space of $(G,X)$-structures on a
fixed manifold).

\begin{prop}\label{ext:prop}
Let $\mu,\nu$ be two singular metrics on $\slice\times\mathbb R$
with singular locus equal to $T$.

Then any isometry
\[
   \psi:(\slice\times\mathbb R\setminus
   T,\mu)\rightarrow(\slice\times\mathbb R\setminus T,\nu)
\]
extends to an isometry $\bar\psi:(\slice\times\mathbb
R,\mu)\rightarrow(\slice\times\mathbb R,\nu)$.
\end{prop}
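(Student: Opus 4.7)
The strategy is to show that $\psi$ admits a unique continuous extension to a bijection $\bar\psi$ defined on all of $\slice\times\mathbb{R}$, after which the fact that $\bar\psi$ is an isometry on the dense open set $\slice\times\mathbb{R}\setminus T$, together with continuity, forces $\bar\psi$ to be an isometry of the singular metrics. The problem is thus local: it suffices to prove that $\psi$ extends continuously at every point $p\in T$, and to apply the symmetric argument to $\psi^{-1}$. Two cases arise, depending on whether $p$ is an interior point of an edge of $T$ or a vertex of $T$.

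For the edge case, $p$ lies in the interior of an edge $e$ with cone angle $\theta_e\in(0,2\pi)$, and a neighborhood of $p$ in $(\slice\times\mathbb{R},\mu)$ is isometric to a neighborhood of the axis in the model cone-spacetime $\AdS_3^{\theta_e}$. Fix a sequence $x_n\to p$ in the smooth part; the claim is that $\psi(x_n)$ converges to a point $q$ lying on an edge of the singular locus of $\nu$ with matching cone angle. First, $\psi(x_n)$ cannot accumulate at a smooth point $y$ of the target: otherwise $\psi^{-1}$ would extend continuously (hence isometrically) from a neighborhood of $y$ to a neighborhood of $p$, producing a local isometry between a smooth AdS neighborhood and one with cone angle $\theta_e\neq 2\pi$, which is impossible since the cone angle is an isometric invariant. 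Next, the images $\psi(x_n)$ remain in a bounded region: picking a precompact neighborhood $V$ of $p$ and combining a local time function with a Cauchy foliation yields an auxiliary Riemannian metric on $V$ whose intrinsic distance is controlled by $\mu$, and $\psi$ is Lipschitz for the corresponding intrinsic length metrics. Finally, any two sequences $x_n,y_n\to p$ satisfy $d_\mu(x_n,y_n)\to 0$ in the intrinsic length metric induced by $\mu$ on the smooth part, and since $\psi$ preserves this intrinsic metric, $\psi(x_n)$ and $\psi(y_n)$ converge to the same singular point $q$.

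For the vertex case, $p$ is a vertex $v$ of $T$, and here I would invoke the link analysis of \cite{colI}. A small neighborhood of $v$, minus $v$, retracts onto the link $\link_v$, an HS-surface with positive mass whose geometry completely determines the germ of the AdS structure at $v$ up to isometry. The restriction of $\psi$ to a punctured neighborhood of $v$ induces, in the limit (using the Lipschitz control established in the edge case to push the argument to the vertex), an isometry between $\link_v$ and the link $\link_w$ of some vertex $w$ of the singular locus of $\nu$. The rigidity of neighborhoods of positive-mass HS-singularities established in \cite{colI} then promotes this link isometry to a continuous extension of $\psi$ at $v$ sending $v$ to $w$.

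The main obstacle is the vertex case. The edge case is essentially a standard removable-singularity argument: the metric completion of a punctured cone-neighborhood is the cone-neighborhood itself, and isometries of Cauchy-dense open subsets extend uniquely to the completion. The vertex case is more delicate because the local geometry is not given by a simple one-parameter model but by an HS-surface with several massive particles, and the extension relies on matching the link structure between source and target; this matching is exactly the content of the local rigidity results for collision points developed in \cite{colI}, which I would apply as a black box.
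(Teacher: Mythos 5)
There is a genuine gap, and it sits at the heart of your edge-case argument: the claim that $\psi$ is Lipschitz for an auxiliary Riemannian (intrinsic length) metric built from a time function and a Cauchy foliation, and that two sequences $x_n,y_n\to p$ have ``$d_\mu(x_n,y_n)\to 0$'' in an intrinsic metric preserved by $\psi$. A Lorentzian isometry preserves the Lorentzian structure, not any auxiliary Riemannian distance; the Riemannian metric you build depends on a choice of time function and foliation that $\psi$ has no reason to respect, and the Lorentzian ``distance'' itself is not a metric (it vanishes on null-related pairs and satisfies a reverse triangle inequality). Consequently neither the boundedness of $\{\psi(x_n)\}$ nor the independence of the limit from the approaching sequence follows from your argument. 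The same defect propagates to your closing remark that ``the metric completion of a punctured cone-neighborhood is the cone-neighborhood itself, and isometries extend to the completion'': this is a Riemannian removable-singularity principle with no Lorentzian counterpart. The paper replaces both steps by genuinely causal arguments. For boundedness: approach $p$ along a $\mu$-geodesic $c$ trapped in a diamond $I^+_\mu(r_-)\cap I^-_\mu(r_+)$ with $r_\pm$ regular; since $\psi$ preserves causal relations, $\psi\circ c$ is trapped in $I^+_\nu(S_-)\cap I^-_\nu(S_+)$ for spacelike surfaces $S_\pm$ through $\psi(r_\pm)$, which is compact, so $\psi\circ c$ accumulates on $T$ (it cannot accumulate at a regular point, being inextensible there). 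For well-definedness: two nearby geodesics $c,d$ ending at $p$ bound a geodesic triangle; the chords $I_t$ from $c(t)$ to $d(t)$ have Lorentzian length tending to $0$, so their images $\psi(I_t)$, confined to a compact set, converge either to a point or to a lightlike arc in $T$ --- and the latter is excluded because $T$ is timelike. Nothing in your proposal supplies a substitute for this last exclusion, which is where the hypothesis that the singular locus is timelike actually enters.

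A secondary point: your case split between edge points and vertices is unnecessary for the extension itself --- the causal trapping and shrinking-chord arguments above define $\bar\psi(p)$ uniformly for every $p\in T$. Where the link and HS-surface analysis of the collision points genuinely enters (and where your instinct to cite the local rigidity of positive-mass HS-singularities is on target) is only in the final step: showing that the continuous extension is an \emph{isometry} at a vertex, which the paper does by checking that the induced map $\psi^{\#}$ between links is an HS-isomorphism via a completeness argument near the singular points of the link. So the black box you invoke is the right one, but it is needed for the regularity of the extension, not for its existence, and it cannot repair the missing convergence argument.
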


\begin{proof}


Let us take $p\in T$.  Consider some small space-like $\mu$-geodesic
$c:[0,1]\rightarrow\slice\times\mathbb R$ such that $c(1)=p$ and
$c([0,1))\cap T=\emptyset$. If $c$ is small enough, we can find two
  points $r_-$ and $r_+$ in $\slice\times\mathbb R\setminus T$ such
  that $c[0,1)\subset I^+_\mu(r_-)\cap I^-_\mu(r_+)$.

Now let us consider the space-like $\nu$-geodesic path $c'(t)=\psi(c(t))$ defined
in $[0,1)$.  Notice that $c'$ is an inextendible geodesic path in
  $\slice\times\mathbb R\setminus T$.

We know that $c'$ is contained in $I^+_\nu(\psi(r_-))\cap
I^-_\nu(\psi(r_+))$. Thus if $\slice_\pm$ is a space-like surface through
$r_\pm$, we have that $c'\subset I^+_\nu(\slice_-)\cap
I^-_\nu(\slice_+)$ that is a compact region in $\slice\times\mathbb
R$.
Thus $c'(t)$ has accumulation points as $t\rightarrow 1$. All these accumulation points lie in $T$; if
there are two different accumulation points, then there is a segment in $T$ accumulated by $c'$.
This is a contradiction since $c'$ is space-like whereas $T$ is time-like.

Hence, $c'(t)$ converges to some point in $T$ as $t\rightarrow 1$.
We define $\hat c=\lim_{t\rightarrow 1}c'(1)$.

To prove that $\psi$ can be extended on $T$ we have to check that
$\hat c$ only depends on the endpoint $p$ of $c$. In other words, if
$d$ is another space-like geodesic arc ending at $p$, we have to prove that $\hat
c$ is equal to $\hat d=\lim_{t\rightarrow1}\psi\circ d(t)$.  By a
standard connectedness argument, there is no loss of generality if we
assume that $d$ is  close to $c$. In particular we may assume
that there exists the space-like geodesic triangle $\Delta$ with vertices $c(0),
d(0), p$.

Consider now the $\mu$-geodesic segment in $\Delta$, say $I_t$, with
endpoints $c(t)$ and $d(t)$.  The image $\psi(I_t)$ is a
$\nu$-geodesic segment contained in $\slice\times\mathbb R\setminus T$.
Arguing as above, we can prove that
all these segments $(\psi(I_t))_{t\in[0,1)}$ are contained in some
compact region of $\slice\times\mathbb R$.  Thus either they converge
to a point (that is the case $\hat c=\hat d$), or they converge to
some geodesic path in $T$ with endpoints $\hat d$ and $\hat c$.

On the other hand, the $\nu$-length of $\psi(I_t)$ goes to zero as
$t\rightarrow 1$.  Thus either $\psi(I_t)$ converges to a point or it
converges to a lightlike path. Since $T$ does not contain any
lightlike geodesic it follows that $\psi(I_t)$ must converge to a
point.  Thus $\hat d=\hat c$.

Finally we can define
\[
   \psi(p)=\hat c
\]
where $c$ is any space-like $\mu$-geodesic segment with endpoint equal to $p$.

Let us prove now that this extension is continuous.
For a sequence of points $p_n$ converging to $p\in T$ we have to
check that $\psi(p_n)\rightarrow\psi(p)$.
We can reduce to consider  two cases:
\begin{itemize}
\item $(p_n)$ is contained in $T$,
\item $(p_n)$ is contained in the complement of $T$.
\end{itemize}
In the former case we consider a point $q$ in the complement of $T$ close to $p$.
Let us consider the $\mu$-geodesic segment $c$ joining $q$ to $p$ and
the segments $c_n$ joining $q$ to $p_n$. Clearly for every $t\in (0,1]$ the points
$c_n(t)$ and $c(t)$
are time related and their Lorentzian distance converges to the
distance between $p_n$ and $p$ as $t\rightarrow 1$. On the other hand, since
$\psi(c_n(t))$, $\psi(c(t))$ converges respectively to $\psi(p_n)$, $\psi(p)$ as
$t\rightarrow 1$, we can conclude that
\[
    d(\psi(p_n),\psi(p))=d(p_n,p)
\]
where $d$ denotes the Lorentzian distance along $T$. This equation implies that
$\psi(p_n)\rightarrow\psi(p)$ as $n\rightarrow+\infty$.

Let us suppose now that the points $p_n$ are contained in the complement of $T$.
We can take $r_+$ and $r_-$ such that $p_n\in I^+_\mu(r_-)\cap I^-_\mu(r_+)$ for $n\geq n_0$.
Thus the same argument used to define $\psi(p)$ shows  that $(\psi(p_n)) $ is contained in
some compact subset of $\slice\times\mathbb R$. To conclude it is sufficient to prove that
if $\psi(p_n)\rightarrow x$ then $x=\psi(p)$.
Clearly $x\in
T$. Moreover either $x$ coincides with $\psi(p)$ or there is a
piece-wise geodesic segment in $T$ connecting $x$ to $\psi(p)$. Since the length of this
geodesic should be equal to the limit of $d(p_n,p)$, that is $0$, we
conclude that $x=\psi(p)$.

Eventually we have  to check that the map $\psi$ is an isometry at $p$. Let us note
that $\psi$ induces a map
\[
\psi^\#:\link_p\rightarrow \link'_{\psi(p)}~,
\]
where $\link_p$ and $\link'_{\psi(p)}$ are respectively the link of $p$ with
respect to $\mu$ and the link of $\psi(p)$ with respect to $\nu$.
Simply, if $c$ is the tangent vector of a geodesic arc $c$ at $p$, we
define $\psi^\#(v)=w$ where $w$ is the tangent vector to the arc
$\psi\circ c$ at $\psi(p)$.  Notice that $\psi$ is an isometry around
$p$ if and only if $\psi^\#$ is a $HS$-isomorphism.

Clearly $\psi^\#$ is bijective and is an isomorphism of $HS$ surfaces
outside the singular locus. On the other hand,
the singularities
are contained in the hyperbolic regions, which are the metrics completions of their
regular parts. Hence the bijection $\psi^\#$ is an extension of the isometry between
the regular parts, therefore a $HS$-isomorphism.
\end{proof}

\section{The holonomy map on the space of admissible spacetimes}
\label{sc:holonomy}

\subsection{Holonomies of singular AdS-spacetimes around the singular locus}
\label{ssc:holonomies}

Let $(M, T)$ be a an admissible AdS structure on $\slice_g\times\mathbb R$. Recall
that $M_{reg}$ is the regular part $M \setminus T$. We consider the holonomy
\[
   \hol:\pi_1(M_{reg})\rightarrow SO(2,2)~.
\]
Fix a collision point, $p$,  of
$M$, and let $\Sigma_p$ be the link of the point $p$.
Notice that the inclusion map $\iota:\Sigma_p\rightarrow M$ produces
an inclusion of groups well-defined up to conjugation
\[
  \iota_\star:\pi_1(\Sigma_{p,reg})\rightarrow\pi_1(M_{reg})\, ,
\]
 where $\Sigma_{p,reg}$ is the
regular part of $\Sigma_p$.

In this section we will investigate the behavior of the restriction
of $\hol$ to $\pi_1(\Sigma_{p,reg})$. 

\begin{lemma}\label{lm:holadm}
If $\gamma$ is a meridian loop in $\pi_1(M_{reg})$ around a particle $e$
\jms{--- a loop going once around $e$ ---}
then $\hol(\gamma)$ is a rotation of angle $\theta_e$ around a time-like
geodesic of $AdS_3$.

If $p$ is a collision point, then there is $x_0\in AdS_3$ which is fixed
by $\hol(\gamma)$ for any $\gamma\in \pi_1(\Sigma_{p,reg})$.
\end{lemma}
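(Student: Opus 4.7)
The plan is to treat the two claims separately, both by reducing to a local model near the singular set, following the standard approach for cone manifolds (and the specific description of neighborhoods of collision points given in \cite{colI}).

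For the first claim, the strategy is to unfold the definition of a massive particle. Pick a regular interior point $q$ of the edge $e$ and a sufficiently small tubular neighborhood $U$ of $q$. By definition of a cone singularity of angle $\theta_e$ along a time-like curve, $U$ is isometric to a neighborhood of the singular axis in a model space $\AdS_3^{\theta_e}$ constructed exactly as $\HH^3_\theta$ in the introduction, but with the geodesic $\Delta$ taken to be time-like. Concretely, $\AdS_3^{\theta_e}$ is obtained by removing a wedge of angle $2\pi-\theta_e$ from $\AdS_3$ around $\Delta$ and gluing the boundary half-planes by the identity along $\Delta$. A meridian loop $\gamma$ around $e$ based at a point of $U_{\reg}$ can then be lifted to the universal cover of the wedge, where it connects a point in one bounding half-plane to the corresponding point in the other; the holonomy of $\gamma$ is therefore precisely the rotation of angle $\theta_e$ around $\Delta$ which identifies the two half-planes. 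The conjugacy class of this rotation in $SO(2,2)$ is independent of the base point, which is what the statement asserts.

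For the second claim, I would rely on the local description of a collision point as given in \cite{colI}. There it is shown that a sufficiently small neighborhood $U$ of $p$ in $M$ admits a developing-type map onto a cone neighborhood of a distinguished point $x_0 \in \AdS_3$, under which $p \mapsto x_0$ and the radial geodesics emanating from $p$ correspond to the geodesic rays from $x_0$. In particular the restriction of this map to $U_{\reg} = U \setminus T$ is a local isometry, and identifies the link $\Sigma_{p,\reg}$ with an HS-surface developing into the HS-link of $x_0$ in $\AdS_3$ (this is precisely how the HS-structure on $\Sigma_p$ is defined in \cite{colI}). Any loop $\gamma \in \pi_1(\Sigma_{p,\reg})$ can be realized inside $U_{\reg}$, hence its holonomy coincides with the holonomy computed in the cone model. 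Since the cone model is invariant under the isotropy group of $x_0$ in $SO(2,2)$, the holonomy of $\gamma$ lies in this isotropy group, and in particular fixes $x_0$.

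The only genuine obstacle is bookkeeping: one must make sure that the map $\iota_\star\colon \pi_1(\Sigma_{p,\reg}) \to \pi_1(M_{\reg})$ and the developing map of $M_{\reg}$ are compatible with the cone developing map on $U$, so that the two notions of holonomy truly agree. This is essentially automatic once one checks that the inclusion $U_{\reg} \hookrightarrow M_{\reg}$ induces the map $\iota_\star$ up to the ambient conjugation that already enters the statement, which is where the phrase ``well-defined up to conjugation'' in the set-up of $\iota_\star$ is used. Both claims then follow by restricting the developed image appropriately.
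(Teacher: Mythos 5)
Your proposal is correct and follows essentially the same route as the paper: the first claim is reduced to the wedge model of a massive particle, where the meridian holonomy is by construction the rotation of angle $\theta_e$ identifying the two bounding half-planes, and the second claim is reduced to the description of a neighborhood of $p$ as the AdS cone over the HS-surface $\Sigma_p$, whose developing map sends radial geodesics from $p$ to geodesic rays from a point $x_0$, so that the holonomy of any $\gamma\in\pi_1(\Sigma_{p,\mathrm{reg}})$ lies in the stabilizer of $x_0$ (with differential the HS-holonomy of $\Sigma_p$). No substantive difference from the paper's argument.
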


\begin{proof}
In the first case a regular neighborhood of $\gamma$ embeds in the local
model of the particle whose holonomy is a rotation of angle $\theta_e$.

In the second case, notice that a neighborhood of the point $p$ is isometric
to the AdS cone of the $HS$ surface $\Sigma_p$. So in order to prove the statement
it is sufficient to check that the holonomy of the AdS cone of a $HS$ surface fixes
a point in $AdS_3$.

Now fix any point $x_0\in AdS_3$ and identify $T_{x_{0}}AdS_3$ with the
Minkowski space $\mathbb R^{2,1}$. In this way the $HS$-sphere is identified with the
space of geodesic rays starting from $\tilde x_0$.
If $c_0:\tilde\Sigma_p\rightarrow HS_2$ is the developing map of $\Sigma_p$, then the
developing map of the AdS  cone of $\Sigma_p$ is the map
\[
dev:\tilde\Sigma_p\times\mathbb R\rightarrow AdS_3
\]
such that if $x$ is not a photon of $\tilde\Sigma_p$ then
$dev(x,t)=c_0(x)[t]$, where $c_0(x)[\bullet]$ is the arc-length parameterization
of the segment $c_0(x)$.

In particular, if $\gamma$ is an element of $\pi_1(\Sigma_{p,reg})$ then the holonomy
of the AdS cone around $\gamma$ is the transformation of $SO(2,2)$ which fixes
 $x_0$ such that its differential at $x_0$ coincides with the holonomy of
 $\Sigma_p$ (as $HS$-surface) around $\gamma$.
 \end{proof}

\subsection{The holonomy map}

After the preliminary material in the previous section, we now turn to the statement
and proof of Theorem \ref{struct:thm}. We first define the notion of ``admissible''
holonomy representations.

\begin{defi}\label{df:adrep}
We say that a representation
\[
\hol:\pi_1(M_{reg})\rightarrow SO(2,2)
\]
is admissible if
\begin{itemize}
\item for any meridian loop $\gamma$ in $M_{reg}$ around an edge $e$ of $T$
$\hol(\gamma)$ is a rotation around a time-like geodesic in $AdS_3$ of angle $\theta_e$,
\item for any vertex $p\in T$, there is a point $x_0\in AdS_3$ fixed by
$\hol(\gamma)$ for any $\gamma\in\pi_1(\Sigma_{p,reg})$.
\end{itemize}
\end{defi}

We denote by $\mathcal R(g, T, \theta)$ the space of admissible representations up to
conjugacy.

By Lemma \ref{lm:holadm} the holonomy of any structure of $\struct$ lies
in $\mathcal R(g, T, \theta)$. We prove now that $\struct$ is locally
homeomorphic to $\mathcal R(g, T, \theta)$.

\begin{theorem} \label{struct:thm}
The holonomy map
\[
\struct\rightarrow \mathcal R(g, T, \theta)
\]
is a local homeomorphism.
\end{theorem}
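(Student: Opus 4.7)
The plan is to apply the Ehresmann--Thurston deformation principle for $(G,X)$-structures to the regular part $M_{\reg}=M\setminus T$ and then upgrade the conclusion across $T$ using the admissibility conditions together with Proposition \ref{ext:prop}. Continuity of the holonomy map is immediate from the way the topology on $\struct$ is defined as a subspace of the space of AdS structures on $(\slice_g\times\R)\setminus T$.

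For local injectivity at a fixed $g_0\in\struct$: if two structures $g,g'$ close to $g_0$ have the same class of holonomy in $\mathcal R(g,T,\theta)$, then the Ehresmann--Thurston argument applied to the open manifold $M_{\reg}$ produces an isometry $\psi\colon(M_{\reg},g)\to(M_{\reg},g')$ isotopic to the identity. By Proposition \ref{ext:prop}, $\psi$ extends to an isometry $(M,g)\to(M,g')$ which preserves $T$, and hence $g=g'$ in $\struct$.

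For local surjectivity, given $\rho\in\mathcal R(g,T,\theta)$ close to $\rho_0=\hol(g_0)$, the Ehresmann--Thurston theorem supplies an AdS structure $g$ on $M_{\reg}$, close to $g_0|_{M_{\reg}}$, with holonomy $\rho$; one must show that $g$ extends uniquely to an admissible AdS structure on $(M,T)$. Along an edge $e\subset T$, a punctured tubular neighborhood has cyclic fundamental group generated by a meridian $\gamma_e$, and by admissibility $\rho(\gamma_e)$ is a rotation of angle $\theta_e$ about a time-like geodesic $\Delta\subset\AdS_3$. A completeness argument for the developing map in a convex tube around $\Delta$ identifies this punctured neighborhood with the punctured local model of a massive particle of angle $\theta_e$, and refilling the axis recovers $e$. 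Around a vertex $p\in T$, admissibility gives a fixed point $x_0\in\AdS_3$ for $\rho|_{\pi_1(\Sigma_{p,\reg})}$; the derivative action at $x_0$ equips the link $\Sigma_p$ with an HS-structure, and the AdS cone over this HS-surface (exactly as in Lemma \ref{lm:holadm}) is a local model whose developing map agrees with that of $g$ on a punctured neighborhood of $p$. Gluing this local model in produces the desired extension.

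The principal difficulty is the vertex extension. One must verify that the developing map of $g$ converges radially to $x_0$ at $p$, so that the local model matches $g$ near $p$ rather than only away from $p$; this is handled by a completeness argument of the same type as the one at the end of the proof of Proposition \ref{ext:prop}. One must also check that the HS-surface $\Sigma_p$ obtained by deformation still satisfies the positive-mass admissibility of Definition 4.2 of \cite{colI}, but positive mass is an open condition on HS-structures and therefore persists for $\rho$ sufficiently close to $\rho_0$. Finally, the Cauchy surfaces $S_k$ witnessing admissibility of $g_0$ remain spacelike after a $C^0$-small deformation of the metric, and no additional collision can appear in the compact region between $S_1$ and $S_n$; hence the extended structure again lies in $\struct$ near $g_0$, providing the local inverse to the holonomy map.
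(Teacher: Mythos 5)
Your overall strategy (Ehresmann--Thurston on the regular part, then extension across $T$) is in the spirit of the paper's proof, but there are two genuine gaps, and they are exactly the points where the paper has to work hardest. First, you apply the Ehresmann--Thurston principle directly to $M_{reg}=M\setminus T$, which is a non-compact, open manifold (indeed $M\cong \slice_g\times\R$ is itself non-compact). The deformation principle in the form you need it --- holonomy map open, and two nearby structures with the same holonomy isomorphic --- is valid for closed manifolds, and only in a weakened form (openness plus embeddings of slightly smaller pieces, as in Lemma \ref{ep:lem}) for compact manifolds with boundary. On an open manifold two nearby $(G,X)$-structures with equal holonomy need not be isomorphic, so your local injectivity argument does not get off the ground, and your local surjectivity does not produce a structure on all of $M_{reg}$. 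The paper avoids this by working on the compact slab $M_0=I^+(\slice_1)\cap I^-(\slice_2)$ with a regular neighborhood of $T$ removed, applying Lemma \ref{ep:lem} there, and then recovering the full spacetime as the \emph{maximal} extension $M(h')$; since $\struct$ consists of maximal structures, the injectivity statement ultimately rests on the uniqueness of maximal extensions (Lemma \ref{mm:lem} and the lemma following it), which your argument never invokes.

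Second, at a collision point you assert that the derivative action of $\rho|_{\pi_1(\link_{p,reg})}$ at the fixed point $x_0$ ``equips the link $\link_p$ with an HS-structure.'' A representation into $SO(2,1)$ does not by itself produce a geometric structure on the link surface: one must prove that for $\rho$ close to $\rho_0$ there \emph{exists} an HS-structure on $\link_p$, close to the original one, with the prescribed cone angles and with holonomy $L\rho$, and moreover that this HS-structure is \emph{unique} in a neighborhood of the original. This is the content of Lemma \ref{bo:lem} in the paper, proved by decomposing the link into a compact core and disks around the cone points and matching the pieces; both the existence and the uniqueness are then used --- existence to build the AdS cone $C(h')$ that fills in the vertex, uniqueness to show that any nearby $m$-spacetime with holonomy $h'$ embeds in $M(h')$. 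Without this lemma (or a substitute), neither your vertex extension nor your injectivity claim is complete. The remaining points you raise (openness of positive mass, persistence of spacelike Cauchy surfaces, convergence of the developing map at the vertex) are reasonable, but they are secondary to these two missing ingredients.
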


To prove this proposition we will use the following well-known fact
about $(G,X)$-structures on compact manifolds with boundary, see
\fb{Theorem 1.7.1} in \cite{canary-epstein-green}.
Let us recall that a collar of a manifold $N$ with boundary is a
neighborhood
of the boundary homeomorphic to $\partial N\times[0,1)$.

\begin{lemma}\label{ep:lem}
Let $N$ be a smooth compact manifold with boundary and let $N'\subset
N$ be a submanifold such that $N\setminus N'$ is a collar of $N$.
\begin{itemize}
\item
Given a $(G,X)$-structure $M$ on $N$ let $hol(M):\pi_1(N)\rightarrow G$
be the corresponding holonomy (that is defined up to conjugacy).
Then, the holonomy map from the space of $(G,X)$-structures on $N$ to
  the space of representations of $\pi_1(N)$ into $G$ (up to conjugacy)
\[
    M\mapsto hol(M)
\]
is an open map.
\item
Let $M_0$ be a $(G,X)$-structure on $N$ and denote by $M_0'$ the
restriction of $M_0$ to $N'$.
There is a neighborhood $\mathcal U$ of $M_0$ in the set of
$(G,X)$-structures on $N$ and a neighbourhood $\mathcal V$ of $M'_0$
in the set of $(G,X)$-structures on $N'$ such that
\begin{enumerate}
\item If $M\in\mathcal U$ and $M'\in\mathcal V$ share the same holonomy,
there is an embedding as $(G,X)$-manifolds
\[
    M'\hookrightarrow M.
\]
homotopic to the inclusion $N'\hookrightarrow N$.
\item For every $M'\in\mathcal V$ there is $M$ in $\mathcal U$ such that $hol(M)=hol (M')$.
\end{enumerate}
\end{itemize}
\end{lemma}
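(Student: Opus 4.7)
The plan is to prove both parts of this classical Ehresmann--Thurston type lemma via developing maps. Recall that a $(G,X)$-structure on a manifold $N$ is encoded, up to $\mathrm{Diff}_0(N) \times G$, by a pair $(\mathrm{dev},\rho)$ where $\mathrm{dev}:\tilde N \to X$ is a local diffeomorphism and $\rho:\pi_1(N) \to G$ is a representation satisfying $\mathrm{dev}(\gamma\tilde x) = \rho(\gamma)\mathrm{dev}(\tilde x)$. The topology on the space of structures is the compact-open $C^\infty$-topology on $\mathrm{dev}$ together with the representation variety topology on $\rho$, and the holonomy map reads $[\mathrm{dev},\rho] \mapsto [\rho]$.

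For openness, given $M_0$ with developing map $\mathrm{dev}_0$ and holonomy $\rho_0$, and a representation $\rho$ close to $\rho_0$, the plan is to produce a $\rho$-equivariant local diffeomorphism $\mathrm{dev}_\rho$. I would fix a compact fundamental domain $D \subset \tilde N$ for the $\pi_1(N)$-action, together with a slightly larger compact set $D'$ such that only finitely many $\gamma \in \pi_1(N)$ satisfy $\gamma D' \cap D' \neq \emptyset$. On $\mathrm{int}(D)$, set $\mathrm{dev}_\rho = \mathrm{dev}_0$, and extend by $\rho$-equivariance to the entire $\pi_1(N)$-orbit. On the overlaps where several definitions compete, I would use a $\pi_1(N)$-equivariant partition of unity (pulled back from one on $N$) subordinate to $\{\gamma D'\}$ to interpolate the finitely many candidate values $\rho(\gamma)\mathrm{dev}_0\gamma^{-1}$, realized in a local exponential chart around $\mathrm{dev}_0(\tilde x) \in X$. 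Closeness of $\rho$ to $\rho_0$ ensures all candidate values are $C^1$-close to $\mathrm{dev}_0$, so the interpolated map stays a local diffeomorphism and is $\rho$-equivariant by construction, giving a new $(G,X)$-structure on $N$ with holonomy $\rho$.

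For the second bullet, since $N \setminus N'$ is a collar, the inclusion $N' \hookrightarrow N$ is a homotopy equivalence, inducing a canonical identification $\pi_1(N') \cong \pi_1(N)$ and hence of representation varieties. Statement (2) then follows by applying the openness step on $N$: a nearby $M' \in \mathcal V$ has holonomy $\rho$ close to $\rho_0$, and the above construction produces $M \in \mathcal U$ with $hol(M) = \rho = hol(M')$. For (1), given $M \in \mathcal U$ and $M' \in \mathcal V$ with the same holonomy $\rho$, I would pick $\rho$-equivariant developing maps $\mathrm{dev}_M$ on $\tilde N$ and $\mathrm{dev}_{M'}$ on $\tilde{N'}$; after post-composing $\mathrm{dev}_{M'}$ by an element of $G$, one may arrange that they agree at a chosen lift of a basepoint in $N'$. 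A connectedness argument using the fact that two $\rho$-equivariant local diffeomorphisms $\tilde{N'} \to X$ agreeing at a point must coincide then yields $\mathrm{dev}_{M'} = \mathrm{dev}_M|_{\tilde{N'}}$, which descends to the required $(G,X)$-embedding $M' \hookrightarrow M$, homotopic to $N' \hookrightarrow N$.

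The technical crux is the smoothing step in the openness argument: producing a single smooth, globally $\rho$-equivariant local diffeomorphism from the finitely many locally valid definitions on the overlaps of translates of $D$. The interpolation scheme must simultaneously (i) be genuinely $\pi_1(N)$-equivariant, forcing the use of a partition of unity descending from $N$; (ii) remain a local diffeomorphism, which follows from $C^1$-openness of the immersion condition together with the smallness of $\|\rho - \rho_0\|$; and (iii) handle the geometry of $X$, which is routinely addressed via a local exponential chart at each point of the image of $D$. All remaining parts of the lemma reduce to this openness step together with the rigidity of developing maps given their holonomy.
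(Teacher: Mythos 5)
First, a point of comparison: the paper does not prove this lemma at all --- it is quoted as a well-known fact with a reference to \cite{canary-epstein-green}. So your proposal is necessarily a ``different route''; the question is whether it is a correct one. It is the standard Ehresmann--Thurston architecture, but as written it has two gaps, one technical and one fatal.

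The technical gap is in the openness step. Your interpolation of the candidate values $\rho(\gamma)\,\mathrm{dev}_0\,\gamma^{-1}$ is performed ``in a local exponential chart around $\mathrm{dev}_0(\tilde x)$''. That chart center transforms under the deck group by the \emph{old} holonomy $\rho_0$ (since $\mathrm{dev}_0$ is $\rho_0$-equivariant), while the candidate values transform by the \emph{new} holonomy $\rho$; consequently the interpolated map is not $\rho$-equivariant, which was the whole point. This can be repaired, but only with an extra idea: one needs an averaging operation on clusters of points in $X$ that commutes with the $G$-action --- e.g.\ the affine center of mass associated to a $G$-invariant connection on $X$ (which exists for $\AdS_3$, $\HH^2$, and the other homogeneous spaces relevant here, but not for arbitrary $(G,X)$, e.g.\ projective structures), or the more careful iterative gluing of \cite{canary-epstein-green}. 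As stated, condition (i) of your ``technical crux'' is exactly what fails.

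The fatal gap is in part (1) of the second bullet. You claim that two $\rho$-equivariant local diffeomorphisms $\tilde N'\to X$ agreeing at a single point must coincide. This is false: rigidity of $(G,X)$-maps gives unique continuation only when two developing maps of the \emph{same} structure agree on an \emph{open} set; developing maps of two different structures with the same holonomy need not agree anywhere beyond the normalized point. (Take $N'$ simply connected, so equivariance is vacuous: any two immersions into $X$ agreeing at a point give a counterexample.) Worse, the conclusion your argument would yield --- $\mathrm{dev}_{M'}=\mathrm{dev}_M|_{\tilde N'}$, i.e.\ $M'$ isometric to $M|_{N'}$ --- is strictly stronger than what the lemma asserts and is false in general; this is precisely why the lemma only claims an \emph{embedding} $M'\hookrightarrow M$ and why $\mathcal U$, $\mathcal V$ must be small neighborhoods of $M_0$, $M_0'$ (a hypothesis your argument never uses). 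The correct argument runs differently: since $\mathrm{dev}_M$ and $\mathrm{dev}_{M'}$ are both $C^1$-close to $\mathrm{dev}_0$ on the compact core, $\mathrm{dev}_M$ has a uniform injectivity radius there, so $f=\mathrm{dev}_M^{-1}\circ\mathrm{dev}_{M'}$ is well defined locally, globally consistent, and $\rho$-equivariant; it descends to a local $(G,X)$-diffeomorphism $N'\to N$ that is $C^1$-close to the inclusion, and a local diffeomorphism of a compact manifold $C^1$-close to an embedding is an embedding, homotopic to the inclusion because $N\setminus N'$ is a collar. Your reduction of part (2) to openness via $\pi_1(N')\cong\pi_1(N)$ is fine, but it inherits the equivariance gap above.
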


First we prove Theorem \ref{struct:thm} assuming just one collision
in $M$.  Let $p_0$ be the collision point of $M$ and $\link_0$ be the link
of $p_0$ in $M$ (that is a HS-surface). Denote by
$\link_{0,reg}$ the regular part of $\link_0$
and by $G_0<\pi_1(\slice\times\mathbb R\setminus T)$ the fundamental group
of $\link_{0,reg}$.

Given any representation $h\in\mathcal R$, let us denote by $x_0$ the point
fixed by $h(G_0)$ and by  $Lh:G_0\rightarrow SO(2,1)=SO(T_{x_0}AdS_3)$
the action of $G_0$ at the tangent space of $x_0$.
Notice that, identifying $SO(T_{x_0}AdS_3)$ with $SO(2,1)$,
the conjugacy class of $Lh$ only depend of the conjugacy class of $h$. Moreover
the map sending $h$ to $Lh$ is a continuous map between $\mathcal R$
and the space of conjugacy classes of representations of $G_0$ into
$SO(2,1)$.

\begin{lemma}\label{bo:lem}
There is a neighborhood $\mathcal U_0$ of $\link_0$ in the space of
HS-surfaces homeomorphic to $\link_0$ such that the holonomy map on
$\mathcal U_0$ is injective.

Moreover, there is a neighbourhood $\mathcal V$ of $h$ in $\mathcal R(g, T, \theta)$ such
that for every $h'\in\mathcal V$ there is an HS-surface in
$\mathcal U_0$, say $\link(h')$, such that the holonomy of $\link(h')$ is conjugate to
$Lh':G_0\rightarrow SO(2,1)$.
\end{lemma}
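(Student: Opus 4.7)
The plan is to reduce both assertions of the lemma to Lemma \ref{ep:lem} by truncating $\link_0$ at its singular points, and then to exploit the fact that the germ of an HS-structure near one of its singularities is rigidly determined by the conjugacy class of the meridian holonomy.

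First, I would pick a compact surface with boundary $N\subset \link_0$ obtained by excising a small open disk around each singular point of $\link_0$, together with a slightly smaller compact $N'\subset N$ such that $N\setminus N'$ is a collar of $\partial N$. The HS-structure of $\link_0$ restricts to an honest $(HS_2, \SO(2,1))$-structure on $N$, and $\pi_1(N)=\pi_1(\link_{0,\reg})=G_0$. Applying Lemma \ref{ep:lem} to this pair furnishes neighborhoods $\mathcal W$ of $\link_0|_N$ and $\mathcal W'$ of $\link_0|_{N'}$ in the spaces of $(G,X)$-structures satisfying the openness and realisability properties stated there.

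For the injectivity claim, I would take $\mathcal U_0$ to be a neighborhood of $\link_0$ in HS-surfaces small enough that restriction to $N$ sends it into $\mathcal W$. If $\link_1,\link_2\in\mathcal U_0$ have the same holonomy $G_0\to \SO(2,1)$, their restrictions to $N$ have the same holonomy, so by Lemma \ref{ep:lem} there is a $(G,X)$-isomorphism between the restrictions of $\link_1$ and $\link_2$ to $N'$. Around each removed disk the HS-model (massive cone point, photon, etc., in the classification of \cite{colI}) is determined up to HS-isomorphism by the conjugacy class of the meridian holonomy; so the isomorphism of collars extends uniquely across each singular point, yielding an HS-isomorphism $\link_1\simeq \link_2$.

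For the existence claim, given $h'\in\mathcal R(g,T,\theta)$ sufficiently close to $h$, the point $x_0$ moves continuously with $h'$ and the induced representation $Lh'\colon G_0\to \SO(2,1)$ is close to the holonomy of $\link_0|_N$. By Lemma \ref{ep:lem}(2), there is a $(G,X)$-structure $M'$ on $N$, close to $\link_0|_N$, with holonomy $Lh'$. To construct $\link(h')$, I would glue back a standard singular HS-model of the same combinatorial type as in $\link_0$ across each boundary circle of $M'$. The admissibility of $h'$ in the sense of Definition \ref{df:adrep} pins down the conjugacy class of $Lh'$ on each peripheral subgroup of $G_0$ exactly to the class realised by the prescribed singularity model, so the gluing is compatible and produces an HS-surface $\link(h')\in\mathcal U_0$.

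The main obstacle is the local rigidity step appearing in both directions: one must verify, singularity type by singularity type, that the HS-model at a singular point of $\link_0$ is uniquely determined by its meridian holonomy in a way which is continuous in the representation, and that the admissibility constraints of Definition \ref{df:adrep} on the ambient AdS holonomy $h'$ really translate into the correct conjugacy class on every peripheral subgroup of $G_0$. This hinges on the classification of HS-surface singularities from \cite{colI}, applied to the particular links arising at a collision point.
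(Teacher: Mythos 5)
Your proposal is correct and follows essentially the same route as the paper: truncate $\link_0$ near its cone points, apply Lemma \ref{ep:lem} to the resulting $(G,X)$-structure on the compact complement, and use the fact that the holonomy of the meridian rigidly determines the local cone-point model to reattach the singular disks. The only place where the paper is more explicit than you are is the compatibility of the two maps on the overlap annuli: for injectivity it invokes the uniqueness (up to rotation) of an isometry of a hyperbolic annulus into a disk containing a cone point, and for existence it uses three nested disks $\Delta_3(i)\subset\Delta_2(i)\subset\Delta_1(i)$ and a second application of Lemma \ref{ep:lem} to embed the annulus $\Delta_2(i)\setminus\Delta_3(i)$ isometrically into the deformed cone disk, rather than merely matching peripheral holonomy conjugacy classes.
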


\begin{proof}
Around each cone  point $q_i$ of $\link_0$ take small disks
\[
   \Delta_1(i)\supset\Delta_2(i)
\]
Let now $\link,\link'$ be two HS-surfaces close to $\link_0$ sharing the same
holonomy.  By Lemma \ref{ep:lem}, up to choosing $\mathcal U_0$
sufficiently small, there is an isometric embedding
\[
  f:(\link\setminus\bigcup\Delta_2(i))\rightarrow \link'.
\]
 Moreover, $\Delta_1(i)$ equipped with the structure induced by $\link$
 embeds in $\link'$ (this because the holonomy locally determines the
 HS-structure near the singular points of HS-surfaces). It is not difficult to see that
 such an inclusion coincides with $f$ on
 $\Delta_1(i)\setminus\Delta_2(i)$ (basically this depends on the fact
 that an isometry of a hyperbolic annulus into a disk containing a
 cone point is unique up to rotations).  Thus gluing those maps we
 obtain an isometry between $\link$ and $\link'$.

To prove the last part of the statement, let us consider for each cone
point a smaller disk $\Delta_3(i)\subset\Delta_2(i)$.  Let
$U=\link_0\setminus\bigcup\Delta_3(i)$. Clearly we can find a
neighbourhood $\mathcal V$ of $h$ such that if $h'\in \mathcal V$ then
there is a structure $U'$ on $U$ close to the original one with holonomy
$Lh'$. On the other hand it is clear that there exists a structure, say
$\Delta'_1(i)$, on $\Delta_1(i)$ with cone singularity with holonomy
given by $Lh'$ and close to the original structure.  By Lemma
\ref{ep:lem}, if $h'$ is sufficiently close to $h$, then
$\Delta_2(i)\setminus\Delta_3(i)$ equipped with the structure given by
$U'$ embeds in $\Delta'_1(i)$. Moreover $\partial \Delta_2(i)$ bounds
in $\Delta'_1(i)$ a disk $\Delta(i)$ containing the cone point. Thus
we can glue the $\Delta_1(i)$ to $U'$ to obtain the HS-surface with
holonomy $Lh'$.
\end{proof}

Let $C(h')$ be the AdS cone on $\link(h')$.
By construction, the holonomy of $C(h')$
is conjugated to $h'|_{G_0}$.

Consider now two space-like surfaces $\slice_1,\slice_2$ in $M$
orthogonal to the singular locus that are disjoint and such that
$p_0\in I^+(\slice_1)\cap I^-(\slice_2)$.  Let $M_0= I^+(\slice_1)\cap
I^-(\slice_2)$. Clearly $\slice_1$ is the past boundary of $M_0$ and
$\slice_2$ is its future boundary.

Take the neighborhood $\mathcal V$ of $h$ in $\mathcal R$ given by
Lemma \ref{bo:lem} and, for $h'\in\mathcal V$, consider the AdS cone
$C(h')$ constructed above.  The following is a simple application of
Lemma~\ref{ep:lem}, we leave the proof to the reader.

\begin{cor}\label{bo:cr}
Let $N_0$ be the AdS-manifold with boundary obtained by cutting a
regular neighborhood of $T$ from $M_0$,
and let $U$ be a collar of $\partial N_0$ in $N_0$.
If $N'$ is a slight deformation of the AdS-structure on $N_0$ with holonomy
$h'$ then $U$, with the AdS-structure induced by $N'$,
embeds in $C(h')$.
\end{cor}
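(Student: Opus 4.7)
The plan is to apply Lemma~\ref{ep:lem}(2) inside the undeformed cone $C(h)$, using the canonical $AdS$-isometric identification between a neighborhood of the collision point $p_0$ in $M_0\setminus T$ and a neighborhood of the cone point of $C(h)$.

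First, I would identify $U$ (equipped with its original $AdS$-structure) with an annular neighborhood of the cross-section $\link_0$ in $C(h)$. Choose a slightly thickened region $W\supset U$ in $C(h)$ such that $W\setminus U$ is a collar of $\partial W$; this is possible because $\link_0$ sits as an interior cross-section of $C(h)$, so $U$ can be thickened on both sides. Applying Lemma~\ref{ep:lem}(2) to the pair $(W,U)$ with base structures the restrictions of $C(h)$ yields open neighborhoods $\mathcal U\ni C(h)|_W$ and $\mathcal V\ni C(h)|_U$ in the corresponding spaces of $AdS$-structures.

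Next, I would produce two compatible deformed structures. On $U$, the structure induced by $N'$ is a small deformation $\mu'$ of $C(h)|_U$: for $N'$ sufficiently close to $N_0$ we have $\mu'\in\mathcal V$, and by construction its holonomy on $G_0=\pi_1(\link_{0,\reg})$ is $h'|_{G_0}$. On $W$, Lemma~\ref{bo:lem} guarantees that $\link(h')$ is close to $\link_0$, so $C(h')$ is a small deformation of $C(h)$ and $\mu:=C(h')|_W$ lies in $\mathcal U$; by the explicit description of the holonomy of an $AdS$-cone in the proof of Lemma~\ref{lm:holadm}, its holonomy on $G_0$ is also $h'|_{G_0}$.

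Finally, since $\mu\in\mathcal U$ and $\mu'\in\mathcal V$ share the same holonomy, Lemma~\ref{ep:lem}(2)(1) provides an $AdS$-embedding $(U,\mu')\hookrightarrow(W,\mu)$. Composing with the inclusion $(W,\mu)=C(h')|_W\hookrightarrow C(h')$ yields the desired embedding of $U$, equipped with the structure induced by $N'$, into $C(h')$. The most delicate point is the verification that the two deformed holonomies coincide as representations into $SO(2,2)$; once this is granted (via the holonomy computation in the proof of Lemma~\ref{lm:holadm} and the HS-correspondence of Lemma~\ref{bo:lem}), the conclusion follows immediately from Lemma~\ref{ep:lem}.
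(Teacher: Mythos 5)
Your argument is correct and follows exactly the route the paper intends: the authors state that the corollary "is a simple application of Lemma~\ref{ep:lem}" and leave the proof to the reader, and your write-up supplies precisely that application, correctly identifying the two key inputs — that $C(h')$ is a small deformation of $C(h)$ with holonomy conjugate to $h'|_{G_0}$ (via Lemma~\ref{bo:lem} and the cone construction of Lemma~\ref{lm:holadm}), and that $N'|_U$ is a small deformation of $C(h)|_U$ with the same holonomy — before invoking Lemma~\ref{ep:lem}(2) to produce the embedding.
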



Now up to shrinking $\mathcal V$ we may suppose that:
\begin{itemize}
\item For any $h'\in\mathcal V$, there is a deformation of the AdS structure on $N_0$,
say $N_0(h')$, with holonomy $h'$.
\item If $U(h')$ denotes the AdS structure induced by $N_0(h')$ on $U$, then
$U(h')$ isometrically embeds in $C(h')$.
\item
The image of the boundary of $N_0(h')$ through this embedding is the frontier
of a regular neighborhood $B$ of the singular locus in $C(h')$.
\item
The image of $U(h')$ is disjoint from $B$
\end{itemize}

The spacetime obtained by gluing $B$ to $N_0(h')$, by identifying
the boundary of $N_0(h')$ with the frontier of $B$,
is a spacetime with \jms{collisions} with holonomy $h'$.
Its maximal extension, say $M(h')$, is a $m$-spacetime
with holonomy $h'$.

To conclude we have to prove that if $h'$ is sufficiently close to
$h$, then $M(h')$ is unique in a neighborhood of $M_0$.

In fact, it is sufficient to show that
any given $m$-spacetime $M'$ with holonomy $h'$ close to $M$ must contain
a spacetime $M'_0\subset M'$ containing the collisions which embeds isometrically in $M(h')$.
We can assume $M'_0$ close to  $M_0$ (this
precisely means that $M'_0$ is obtained by deforming slightly the metric
on $M_0$).

Take small neighborhoods $U_2\subset U_1$ of the singular locus in $M'_0$.
By Lemma \ref{ep:lem} $M'_0\setminus U_2$ embeds in $M(h')$.
By the uniqueness of the $HS$-surface with holonomy $h'$, $U_1$ embeds
in $M(h')$ as well.

It is not difficult to check that there exists a unique
isometric embedding $U_1\cap U_2\rightarrow M(h')$, so
the embeddings $U_1\hookrightarrow M(h')$ and $M'_0\setminus
U_2\hookrightarrow M(h')$ coincide on the intersection. So they can be combined 
to an embedding $M'_0\hookrightarrow M(h')$.

This concludes the proof of Theorem \ref{struct:thm} when only
one interaction occurs.  The following lemma allows to conclude in the
general case by an inductive argument.

\begin{lemma}
Let $\slice$ be a space-like surface of $M$, and
let $M_-$, $M_+$ be the past and the future of $\slice$ in $M$.
Suppose that for a small deformation $h'$ of the holonomy $h$ of $M$
there are two spacetimes with collisions $M_-'\cong M_-$ and
$M_+'\cong M_+$ such that the holonomy
of $M_\pm'$ is equal to $h'|_{\pi_1(M_\pm)}$. Then there is a
spacetime $M'$ close to $M$ containing both $M_-'$ and $M_+'$.
\end{lemma}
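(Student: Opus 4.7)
My plan is to glue $M_-'$ and $M_+'$ along a common deformed thickening of $\slice$, using Lemma \ref{ep:lem} as the main tool.

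First, I would choose a small two-sided tubular neighborhood $W$ of $\slice$ in $M$ containing no collision point, and set $\tilde M_- := M_- \cup W$, $\tilde M_+ := W \cup M_+$. Since $\tilde M_-$ deformation retracts onto $M_-$, the representation $h'|_{\pi_1(M_{-,reg})}$ extends naturally to an admissible representation of $\pi_1(\tilde M_{-,reg})$. By the openness part of Lemma \ref{ep:lem}, this representation is realized by a small deformation $\tilde M_-'$ of the AdS-structure on $\tilde M_-$; and by the uniqueness-up-to-embedding part of the same lemma, the given $M_-'$ embeds isometrically into $\tilde M_-'$ via a map homotopic to the inclusion. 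The symmetric construction yields $\tilde M_+'$ extending $M_+'$. Their intersection is a topological neighborhood of $\slice$ carrying two close-by deformed AdS-structures, both with holonomy $h'|_{\pi_1(\slice_{reg})}$.

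Applying Lemma \ref{ep:lem} once more to these two structures on $W$, a slightly smaller collar of $\slice$ equipped with the $\tilde M_-'$-structure embeds isometrically into the $\tilde M_+'$-structure via a map $\Phi$ homotopic to the identity. I would then use $\Phi$ to glue $\tilde M_-'$ and $\tilde M_+'$ into a single topological manifold $M'$ carrying an AdS-structure away from $T$. By construction $M'$ contains both $M_-'$ and $M_+'$ isometrically; a van Kampen argument applied to the decomposition $M \setminus T = (M_-\cup W)\setminus T \,\cup\, (W\cup M_+)\setminus T$, intersected along $W\setminus T$ which retracts to $\slice_{reg}$, shows that the holonomy of $M'$ agrees with $h'$. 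Admissibility near the edges of $T$ is preserved because $W$ avoids all collision points, so the particles passing through $\slice$ remain cone singularities with the prescribed angles $\theta_e$ (these being encoded by the meridian holonomies in $h'$).

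The main technical obstacle is verifying that the gluing via $\Phi$ is well-defined: $\Phi$ must be uniquely determined on overlaps, and must extend continuously across the time-like edges of $T$ meeting $\slice$. Uniqueness on the regular part follows from the embedding clause of Lemma \ref{ep:lem} applied to nested subcollars together with the local rigidity of AdS cone-structures of prescribed angle around a time-like edge; extension across the singular points $\slice \cap T$ is handled by the completeness-type argument used in Proposition \ref{ext:prop}. Once $\Phi$ is known to be a well-defined isometric embedding, the glued space $M'$ inherits the Hausdorff property from the fact that the identification is open and respects the original identifications $M_-' \cong M_-$, $M_+' \cong M_+$. Closeness of $M'$ to $M$ in the deformation space is immediate from the closeness of each of $\tilde M_\pm'$ to the corresponding piece of $M$.
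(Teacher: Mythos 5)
Your argument is correct in outline, but it takes a genuinely different route from the paper's. You glue the two deformed pieces \emph{directly}, after thickening each across $\slice$ and invoking Lemma \ref{ep:lem} to produce an isometric overlap map $\Phi$ on a subcollar of $W$. The paper instead interposes a canonical third object: the maximal globally hyperbolic spacetime with particles $N(h')$ on $\slice\times\R$ with holonomy $h'|_{\pi_1(\slice)}$. Collars $U_\pm$ of $\slice$ in $M_\pm'$ are embedded into $N(h')$ with controlled images, and $M_-'$, $M_+'$ are glued to the fixed slab $\slice\times[-\epsilon/2,\epsilon/2]\subset N(h')$ rather than to each other. What the paper's detour buys is canonicity: the inclusion of a globally hyperbolic spacetime with particles into its maximal extension is \emph{unique}, so the two embeddings $i_\pm$ are forced to be compatible and the glued object is independent of choices --- a fact the subsequent Remark on uniqueness of the $m$-spacetime relies on explicitly. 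In your version this canonicity is exactly the point you flag as ``the main technical obstacle'': Lemma \ref{ep:lem} produces \emph{an} embedding $\Phi$ homotopic to the identity but says nothing about its uniqueness, so you must argue separately (via rigidity of the local models along the particles and the extension argument of Proposition \ref{ext:prop}) that the gluing is well defined and Hausdorff. Your sketch of that step is plausible but is the one place where your proof carries a real burden that the paper's choice of intermediary discharges automatically; conversely, your approach avoids having to invoke the existence of the maximal GH extension $N(h')$ for the deformed holonomy. The remaining ingredients --- extending $h'$ over the thickenings by homotopy invariance, the van Kampen computation of the holonomy of the glued space, and preservation of the cone angles because $W$ avoids the collisions --- match the paper's reasoning.
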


\begin{proof}
Let  $N(h)$ denote the maximal GH structure with
particles on $\slice\times\mathbb R$ whose  holonomy is $h|_{\pi_1(\slice_{\fb{reg}})}$.
There is a neighborhood of $\slice$ in $M$ which embeds in $N(h)$.
We can suppose that $\slice\subset M$ is sent to $\slice\times\{0\}$
through this embedding.

Now let $U_\pm$ be a collar of $\slice$ in $M_\pm$ such that the image of
$U_-$ in $N(h)$ is $\slice\times [-\epsilon, 0]$ and the image of
$U_+$ is $\slice\times[0,\epsilon]$ for some $\epsilon>0$.

 If $h'$ is sufficiently close to $h$, then there is an isometric
 embedding of $U_\pm$ (considered as subset of $M_\pm'$) into $N(h')$
 \[
    i_\pm: U_\pm\hookrightarrow N(h')
 \]
 such that the image of $U_-$ is contained in $\slice\times
 [-2\epsilon, \epsilon/3]$ and contains $\slice\times\{-\epsilon/2\}$,
 and that the image of $U_+$ is contained in
 $\slice\times[-\epsilon/3,2\epsilon]$ and contains
 $\slice\times\{\epsilon/2\}$. Thus we can glue $M'_\pm $ and
 $\slice\times [-\epsilon/2,\epsilon/2]$ by identifying $p\in
 U_\pm\cap i_\pm^{-1}(\slice\times[-\epsilon/2,\epsilon/2])$ with its
 image.  The spacetime we obtain, say $M'$, clearly contains $M'_-$ and
 $M'_+$.
\end{proof}

\begin{remark}
To prove that there is a unique $m$-spacetime in a \fb{neighborhood} of $M$
with holonomy $h'$, we again use an inductive argument. Suppose we can
find in any small neighborhood of $M$ two $m$-spacetimes $M'$ and
$M''$ with holonomy $h'$. We fix a space-like surface
 $\slice$ in $M$ such that both the future and the past of
$\slice$, say $M_\pm$, contain some collision points.  Let $U\subset
V$ be regular \fb{neighborhoods} of $\slice$ in $M$ with space-like
boundaries.  We can consider collars $U'\subset V'$ in $M'$ and
$U''\subset V''$ in $M''$ such that
\begin{itemize}
\item $U'\cup U''$ and $V'\cup V''$ are close to $U$ and $V$ respectively,
\item they do not contain any collision,
\item they have space-like boundary.
\end{itemize}
Applying the inductive hypothesis on the connected regions of the
complement of $U'$ in $M'$ and $U''$ in $M'$ we have that for $h'$
sufficiently close to $h$ there is an isometric embedding
\[
    \psi:M'\setminus U'\rightarrow M''
\]
such that $\psi(\partial U')$ is contained in $V''$.

Now consider the isometric embeddings
\[
   u':V'\rightarrow N(h')\qquad u'':V''\rightarrow N(h')
 \]
 \fb{where $N(h')$ is the GHMC structure on $\slice\times\mathbb R$
 whose holonomy is $h'|_{\pi_1(\slice_{reg})}$. Notice}
  that the maps $u'$ and $u''\circ\psi$ provide two isometric embeddings
 \[
     V'\setminus U'\rightarrow N(h')
 \]
 so they must coincide (we are using the fact that the inclusion of a
 GH spacetime with particles in its maximal extension is uniquely
 determined).

 Finally we can extend $\psi$ on the whole $M'$ by setting on $V'$
 \[
    \psi= (u'')^{-1}\circ u'\,.
\]
\end{remark}

\section{The left and right metrics on space-like slices of good spacetimes}
\label{sc:leftright}

The main goal of this section is 
to construct, for each space-like slice containing no particle collision, two
hyperbolic metrics with cone singularities on a surface. It is the sequence
of those pairs of hyperbolic metrics (or, more precisely, the graph of those
pairs of hyperbolic metrics) which provide a complete description of a
spacetime with interacting particles, as seen in Theorem \ref{tm:main}.

\subsection{The left and right connections} \label{ssc:leftright}

The constructions of the left and right hyperbolic metrics, below, can be understood
in a fairly simple manner through two flat linear connections
on the tangent bundle of  an AdS 3-manifold. In this first part
we consider an AdS manifold $M$, which could for instance be the regular
part of an AdS manifold with particles.

\begin{defi}\label{df:Dleft}
Let $M$ be an AdS manifold and $\nabla$ be its Levi-Civita connection.
On $M$ we consider two linear connections defined by
\[
D^l_vu=\nabla_vu+u\times v~,~~D^r_vu=\nabla_vu-u\times v~,
\]
where $\times$ is
the cross-product in $AdS_3$ --- it can be defined by $(v\times y)^*=*(v^*\wedge
y^*)$, where $v^*$ is the 1-form dual to $v$ for the AdS metric and $*$ is the
Hodge star operator.
\end{defi}

\begin{lemma}
$D^l$ and $D^r$ are flat connections compatible with the AdS-metric.
\end{lemma}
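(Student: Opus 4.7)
My plan is to verify the two properties by direct computation, exploiting the fact that the cross product on a Lorentzian $3$-manifold is totally antisymmetric with respect to $g$ and that $\nabla$ is both metric-compatible and torsion-free.

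For metric compatibility of $D^l$, I would expand
\[
g(D^l_v u, w) + g(u, D^l_v w) = g(\nabla_v u, w) + g(u,\nabla_v w) + g(u\times v, w) + g(u, w\times v).
\]
The first two terms sum to $v(g(u,w))$ since $\nabla$ is Levi-Civita. For the remaining terms, I would use the defining identity $g(a\times b, c) = \omega(a,b,c)$ with $\omega$ the Lorentzian volume form, which is totally antisymmetric; hence $g(u\times v, w) = -g(w\times v, u) = -g(u, w\times v)$, and the two extra terms cancel. The same argument works for $D^r$ since only the sign of the cross product is reversed.

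For flatness, I would compute the curvature directly. Using that $\nabla$ commutes with the cross product via the Leibniz rule $\nabla_X(Z\times Y) = (\nabla_X Z)\times Y + Z\times(\nabla_X Y)$ (a consequence of metric compatibility), expanding $D^l_X D^l_Y Z - D^l_Y D^l_X Z - D^l_{[X,Y]}Z$ gives, after cancellations using the torsion-free identity $\nabla_X Y - \nabla_Y X = [X,Y]$,
\[
R^{D^l}(X,Y)Z = R^\nabla(X,Y)Z + (Z\times Y)\times X - (Z\times X)\times Y.
\]
Now, since $AdS_3$ has constant sectional curvature $-1$, $R^\nabla(X,Y)Z = -g(Y,Z)X + g(X,Z)Y$. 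The key computational step is the triple cross-product identity in Lorentzian signature $(2,1)$: a direct check on an orthonormal frame shows that $(a\times b)\times c = g(b,c)a - g(a,c)b$. Applying this,
\[
(Z\times Y)\times X - (Z\times X)\times Y = \bigl(g(Y,X)Z - g(Z,X)Y\bigr) - \bigl(g(X,Y)Z - g(Z,Y)X\bigr) = g(Y,Z)X - g(X,Z)Y,
\]
which exactly cancels $R^\nabla(X,Y)Z$. Thus $R^{D^l}\equiv 0$. The same computation, with the sign of $\times$ reversed, yields $R^{D^r}\equiv 0$.

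The main delicate point is getting the signs right in the triple cross product in signature $(2,1)$; it is crucial that the ambient $AdS_3$-curvature contribution is exactly cancelled by the new quadratic terms, which is the geometric reason behind the construction of $D^l$ and $D^r$. Once this identity is in hand, both assertions follow in a few lines.
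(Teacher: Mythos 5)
Your proof is correct and follows essentially the same route as the paper: metric compatibility from the total antisymmetry of $g(\cdot\times\cdot,\cdot)$, and flatness by expanding the curvature of $D^l$, using torsion-freeness to reduce it to $R^\nabla$ plus two iterated cross-product terms, and cancelling these against the constant-curvature term. The only difference is that you make explicit the Lorentzian triple-product identity that the paper leaves as ``a direct computation,'' and your signs check out.
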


\begin{proof}
The fact that $D^l$ and $D^r$ are compatible with the metric easily follows from
the property of the cross-product.

\fb{
Since the cross product is flat with respect to the Levi-Civita connection,
there is a simple relation between 
 the curvature $R^l$ of $D^l$ to the curvature $R$ of $\nabla$,
 that can be proved by a direct computation.
In fact we have
\[
  R^l(v,w)u=R(v,w)u+v\times(w\times u)-w\times(v\times u)~.
\]

A basic point is that the Riemann curvature tensor of a
Lorentzian space form of constant curvature $K$  
can be easily expressed in terms of the cross product.
Indeed if $v,w,u$ are tangent vectors in $M$ we have
\begin{equation}\label{eq:spfrm}
   R(v,w)u=K(v\times w)\times u~.
\end{equation}
So we get
\[
  R^l(v,w)u=u\times(v\times w) + v\times(w\times u) + w\times(u\times v)=0
\]
where the last identity holds for the Jacobi identity for the cross product.
}
\end{proof}

\fb{
\begin{remark}
For a $3$-dimensional Riemannian space form, formula (\ref{eq:spfrm}) holds
with the opposite sign. For this reason the  construction above applied to
the Riemannian setting produces two flat connections on the unit tangent
bundle of a 3-dimensional spherical manifold.

This phenomenon is closely related to the fact that the isometry group
of the three dimensional sphere 
$S^3$ (as well as the isometry group of $AdS_3$) has a natural product structure.
\end{remark}
}

\begin{defi} \label{df:unittmlk}
We call $T^{1,t}M$ the bundle of positively directed unit
time-like vectors on $M$.
\end{defi}

Notice that if $V$ is a unit time-like vector field, then
$D^l_xV$ and $D^r_xV$ are orthogonal to $V$ at any point.
In particular they belong to the tangent space of $T^{1,t}M$.

In this section we want to relate the holonomies of connections $D^l$
and $D^r$ --- that are representations $\pi_1(M)\rightarrow SO_0(2,1)$ ---
to the holonomy of the AdS-structure on $M$, that is a representation
$\pi_1(M)\rightarrow SO(2,2)$.

First we prove that the holonomy of the model space $AdS_3$ is trivial.

\begin{lemma}\label{lm:fl}
The holonomy of $D^l$ and $D^r$ on $AdS_3$ is trivial.
\end{lemma}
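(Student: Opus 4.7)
My plan is to exhibit, for each of $D^l$ and $D^r$, a global frame of three pointwise linearly independent, parallel vector fields on $AdS_3$. The existence of such a frame forces parallel transport around every loop to act as the identity on the tangent space, so the holonomy representation $\pi_1(AdS_3)\to SO(2,1)$ is trivial (this is non-trivial to say in the first place, since $\pi_1(AdS_3)\cong \mathbb{Z}$). The key is to exploit the Lie-group structure of $AdS_3$ together with the fact that its metric is bi-invariant, which makes left- and right-invariant vector fields natural candidates for the desired parallel frames.

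Concretely, I would identify $AdS_3$ (possibly after rescaling the model metric) with $PSL(2,\mathbb{R})$ equipped with a multiple of its Killing form, realizing the $AdS$ metric as a bi-invariant Lorentzian metric of constant sectional curvature $-1$. Koszul's formula then gives the well-known identity $\nabla_X Y = \tfrac12 [X,Y]$ for left-invariant $X, Y$; by tensoriality in the first slot this extends to $\nabla_V X = \tfrac12 [V, X]$ for any left-invariant $X$ and arbitrary tangent vector $V$. Moreover, on a three-dimensional oriented Lorentz vector space the Lie bracket on $\mathfrak{sl}(2,\mathbb{R})$ is proportional to the cross product induced by the metric and orientation, and with a compatible choice of orientation and normalization one has $[V,X] = 2\, V \times X$. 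Combining these yields the one-line cancellation
\[
D^l_V X \;=\; \nabla_V X + X \times V \;=\; \tfrac12 [V,X] + X\times V \;=\; V\times X + X \times V \;=\; 0,
\]
so every left-invariant vector field is $D^l$-parallel. Extending any basis of $\mathfrak{sl}(2,\mathbb{R})$ to a global left-invariant frame produces three linearly independent $D^l$-parallel sections of $TAdS_3$, and the holonomy of $D^l$ is therefore trivial.

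The $D^r$ case is symmetric: for a right-invariant vector field $X$ one has $\nabla_V X = -\tfrac12[V, X]$ (the sign flip stemming from the fact that right-invariant fields give the opposite Lie algebra structure), whence
\[
D^r_V X \;=\; \nabla_V X - X\times V \;=\; -\tfrac12[V,X] - X\times V \;=\; -V\times X - X\times V \;=\; 0,
\]
and three linearly independent right-invariant fields provide a global $D^r$-parallel frame. The only genuine bookkeeping obstacle is fixing the orientation of $\mathfrak{sl}(2,\mathbb{R})$ and the scaling of the Killing form so that $[V,X] = 2\, V \times X$ holds with \emph{exactly} this constant rather than $\pm c$ for some other $c$. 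This can either be done once and for all by a direct check on a single pair of basis vectors of $\mathfrak{sl}(2,\mathbb{R})$ at the identity, or avoided entirely by reducing to that basis and propagating the vanishing $D^l_V X = 0$ to every point via the left-invariance of both $\nabla$ and the cross product (both are invariant under isometries of $AdS_3$).
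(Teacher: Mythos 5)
Your proof is correct, but it takes a genuinely different route from the paper. The paper exploits the fact that $\pi_1(AdS_3)\cong\Z$ is generated by the single closed geodesic $\gamma(t)=(\cos t,\sin t,0,0)$ and computes the $D^l$-parallel transport along it explicitly: writing $V$ for the $\nabla$-parallel extension of a vector $v$ orthogonal to $\dot\gamma$, the $D^l$-parallel field is $V^l(t)=\cos(t)V(t)-\sin(t)V(t)\times\dot\gamma$, which is $2\pi$-periodic, so the holonomy of the generator is the identity. You instead identify the quadric $AdS_3$ with $SL(2,\R)$ carrying the bi-invariant metric $\langle A,B\rangle=\tfrac12\tr(AB)$ (note it is $SL(2,\R)$, not $PSL(2,\R)$, though this is immaterial for your argument), use $\nabla_VX=\tfrac12[V,X]$ for left-invariant $X$, and cancel against the cross-product term to produce a global $D^l$-parallel frame; similarly for $D^r$ with right-invariant fields. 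Both arguments are sound. The paper's computation is elementary, stays entirely inside the quadric model, and the explicit formula for $V^l(t)$ exposes the ``rotation by arc length along time-like geodesics'' behavior of $D^l$ that recurs later in the paper. Your argument is more structural: it explains \emph{why} the holonomy is trivial (the $D^l$- and $D^r$-parallel sections are exactly the left- and right-invariant fields, which is the conceptual source of the splitting $\isomz\cong PSL(2,\R)\times PSL(2,\R)$), it reproves flatness as a byproduct, and it avoids any appeal to the generator of $\pi_1$. The price is the one-time verification that, with the normalization giving curvature $-1$, one has $[V,X]=\pm 2\,V\times X$ on $\mathfrak{sl}(2,\R)$ (the constant $2$ does work out for $\langle A,B\rangle=\tfrac12\tr(AB)$); the residual sign ambiguity from the choice of orientation at worst swaps the roles of $D^l$ and $D^r$ and so does not affect the conclusion, as you correctly observe.
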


\begin{proof}
Since the fundamental group of $AdS_3$ is generated by
the geodesic curve $\gamma(t)=(\cos t,\sin t,0,0)$, it is sufficient to
compute the linear maps
\[
  h^l(\gamma): T_{\gamma(0)}AdS_3\ni v\mapsto V^l(2\pi)\in T_{\gamma(0)}AdS_3
\]
where $V^l(t)$ is the $D^l$-parallel field along $\gamma$ with initial condition
$V^l(0)=v$.

If $v=\dot\gamma(0)$, it is easy to see that $V^l(t)=\dot\gamma(t)$, so
$h^l(\gamma)(v)=v$.

If $v$ is orthogonal to $\dot\gamma$, denote by $V$ the $\nabla$-parallel
field extending $v$. Then we easily see that
\[
          V^l(t)=\cos (t) V(t)\ -\ \sin(t) V(t)\times\dot\gamma~.
\]
Since $V(2\pi)=v$, we obtain that $h^l(\gamma)(v)=v$.

By linearity we conclude that $h^l(\gamma)$ is the identity.
Similarly we can prove that $h^r(\gamma)$ is  the identity.
\end{proof}

Let us fix a base point $x_0\in AdS_3$ and consider the maps
\[
  \tau^l(x), \tau^r(x):T_{x_0}AdS_3\rightarrow T_{x}AdS_3
\]
obtained by using parallel transport with respect to $D^l$ and $D^r$
along any curve joining $x_0$ to $x$. By Lemma \ref{lm:fl}, these maps are
well-defined.

We identify once for all $T_{x_0}AdS_3$ with Minkowski space, and
$O(T_{x_0}AdS_3)$ with $O(2,1)$.

\fb{
Given any isometry $g\in SO(2,2)$, we can consider the linear
transformations of $T_{x_0}M$ obtained by composing the 
differential map $dg(x_0):T_{x_0}AdS_3\rightarrow T_{g(x_0)}AdS_3$
by the inverse of the parallel
transports $\tau_l(g(x_0)),\tau_r(g(x_0)):T_{x_0}AdS_3\rightarrow
T_{g(x_0)}AdS_3$. Namely
\[
\begin{array}{l}
g_l=\tau_l(g(x_0))^{-1}\circ dg(x_0)~, ~~
g_r=\tau_r(g(x_0))^{-1}\circ dg(x_0)~.
\end{array}
\]
}
Notice that $g_l$ and $g_r$ are elements of $SO_0(2,1)$.
\begin{lemma}
The map
\begin{equation}\label{eq:can}
I:SO_0(2,2)\ni g\mapsto (g_l,g_r)\in SO_0(2,1)\times SO_0(2,1)
\end{equation}
is a surjective homomorphism and its kernel is the 
$\mathbb Z/2\mathbb Z$-subgroup generated by the antipodal map.
\end{lemma}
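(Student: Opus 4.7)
The approach is to first verify that $I$ is a homomorphism using the naturality of the construction, then to identify $\AdS_3$ with the Lie group $SL(2,\R)$ equipped with its bi-invariant Killing metric so that both $I$ and its kernel become fully explicit.

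For the homomorphism property, the key observation is that $D^l$ and $D^r$ are built entirely out of $\nabla$ and the cross product, both of which are preserved by orientation- and time-orientation preserving isometries. Thus for any $g\in SO_0(2,2)$, the pushforward $g_{*}$ preserves $D^l$ and therefore sends $D^l$-parallel fields to $D^l$-parallel fields. Reading this through the flat trivialisation $\tau^l$ yields, for every $x\in \AdS_3$, the identity
\[
dg(x)\circ \tau^l(x) \;=\; \tau^l(g(x))\circ g_l ,
\]
with $g_l$ independent of $x$ (setting $x=x_0$ recovers its definition). Applying this identity with $g$ at $x=h(x_0)$ and combining it with the corresponding identity for $h$ at $x_0$ immediately gives $(gh)_l=g_l h_l$. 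The same argument applies verbatim to $D^r$. The metric-compatibility of $D^l$ and $D^r$ together with the connectedness of $SO_0(2,2)$ forces the image to lie in $SO_0(2,1)\times SO_0(2,1)$.

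To compute the image and kernel, I identify $\AdS_3$ with $SL(2,\R)$ carrying the bi-invariant metric induced by its Killing form; under this identification $SO_0(2,2)\cong (SL(2,\R)\times SL(2,\R))/\{\pm(I,I)\}$ acts by $(A,B)\cdot X=AXB^{-1}$. Using the standard identity $\nabla_V U=\tfrac12[V,U]$ for left-invariant fields on a Lie group with bi-invariant metric, together with the fact that in three dimensions the Lie bracket on $\mathfrak{sl}(2,\R)$ coincides (up to a global sign fixed by the orientation) with twice the cross product on $\R^{2,1}$, one checks that $D^l$ is precisely the left-invariant flat (Cartan) connection and $D^r$ the right-invariant one. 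Taking $x_0=e$ and trivialising by left-, respectively right-, invariant vector fields, a short direct computation gives
\[
g_l=\mathrm{Ad}(B^{-1}), \qquad g_r=\mathrm{Ad}(A^{-1}).
\]
Surjectivity of $I$ then follows from that of $\mathrm{Ad}\colon SL(2,\R)\to SO_0(2,1)$, the classical $2{:}1$ cover $SL(2,\R)\to PSL(2,\R)\cong SO_0(2,1)$. The kernel forces $A,B\in Z(SL(2,\R))=\{\pm I\}$; modulo the diagonal class $\pm(I,I)$ this is a group of order two, whose non-trivial element is the class of $(-I,I)$ (equivalently $(I,-I)$), acting on $\AdS_3$ as $X\mapsto -X$, i.e.\ as the antipodal map.

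The main technical obstacle is the sign bookkeeping in the second step: verifying that the $+\,u\times v$ in Definition~\ref{df:Dleft}, with the conventions fixed for the orientation of $\AdS_3$ and the cross product on $\R^{2,2}$, really matches the \emph{left} Cartan connection rather than the right one. Once these conventions are aligned, both the surjectivity and the identification of the kernel reduce to standard facts about $SL(2,\R)$, its adjoint representation, and the centre $\{\pm I\}$.
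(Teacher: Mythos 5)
Your proof is correct, and while the first half (the homomorphism property, deduced from the naturality of $D^l$- and $D^r$-parallel transport under isometries) is essentially the paper's own argument, the second half takes a genuinely different route. The paper stays intrinsic: it computes the differential of $I$ at the identity on a Killing field $X$ as $dI_{id}(X)=(D^lX(x_0),D^rX(x_0))$, checks that this is an isomorphism of Lie algebras, concludes that $I$ is a covering map (hence surjective onto the connected target), and then identifies the kernel abstractly --- a discrete normal subgroup of a connected group is central, and since $SO_0(2,1)$ has trivial center the kernel must be exactly the center of $SO_0(2,2)$, i.e.\ the group generated by the antipodal map. You instead pass to the model $\AdS_3\cong SL(2,\R)$, recognize $D^l$ and $D^r$ as the two flat Cartan connections, and obtain the closed formulas $g_l=\mathrm{Ad}(B^{-1})$ and $g_r=\mathrm{Ad}(A^{-1})$ for the isometry $X\mapsto AXB^{-1}$; surjectivity and the kernel then fall out of the surjectivity of $\mathrm{Ad}\colon SL(2,\R)\rightarrow SO_0(2,1)$ and of $Z(SL(2,\R))=\{\pm I\}$, with the nontrivial kernel class $[(-I,I)]$ visibly acting as $X\mapsto -X$. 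Your route buys an explicit description of $I$ (in effect recovering the identification the paper only alludes to in its remark on Mess's double ruling of the quadric), at the price of the sign bookkeeping you rightly flag and of having to verify that the connections of Definition \ref{df:Dleft} really are the Cartan connections under the normalization of the Killing metric giving curvature $-1$; the paper's route avoids any choice of model but identifies the kernel only through the general structure theory of covering homomorphisms rather than by exhibiting it. Both arguments are complete.
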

\begin{proof}
Notice that $\tau_l(hg(x_0))=\tau'_l\circ\tau_l(h(x_0))$ where
$\tau'_l$ is the parallel transport $T_{h(x_0)}M\rightarrow
T_{hg(x_0)}M$. Since $D^l$ is preserved by isometries of $AdS_3$ we
deduce that $\tau'_l=dh(g(x_0))\circ\tau_l(g(x_0))\circ
(dh(x_0))^{-1}$.  From these formulas we easily get that
$I$ is a homomorphism.

Given a Killing vector field $X\in\mathfrak{so}(2,2)$ we have that
\[
    dI_{id}(X)=(D^lX(x_0), D^rX(x_0))~.
\]
(Notice that $D^lX$ and $D^rX$ are skew-symmetric operators of $T_{x_0}M$.)
This formula easily shows that $dI_{id}$ is an isomorphism.
We conclude that $I$ is a covering map. Since the center of
$SO_0(2,1)$ is trivial, $\ker I$ is the center of $SO_0(2,2)$, that
is, the group generated by the antipodal map.
\end{proof}

\begin{remark}
Mess \cite{mess} described this map $\SO_0(2,2)\rightarrow
PSL(2,\mathbb R)\times PSL(2,\mathbb R)$ in a different way, using the
double ruling of the projective quadric $C=\{[x]\in \mathbb{P}(\mathbb
R^{2,2})|\langle x,x\rangle=0\}$.
\end{remark}

\begin{lemma}\label{lm:stab}
Through the identification between $SO_0(2,2)$ with $SO_0(2,1)\times
SO_0(2,1)$, the stabilizer in $SO_0(2,2)$ of a point $x\in AdS_3$
corresponds to a subgroup of $SO_0(2,1)\times SO_0(2,1)$ conjugated to
the diagonal subgroup.
\end{lemma}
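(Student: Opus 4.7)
My plan is to reduce the statement to the special case $x=x_0$, since $\SO_0(2,2)$ acts transitively on $\AdS_3$ and $I$ is a homomorphism: for any $x$, choose $h\in \SO_0(2,2)$ with $h(x_0)=x$, so that $\mathrm{Stab}(x)=h\,\mathrm{Stab}(x_0)\,h^{-1}$, and hence $I(\mathrm{Stab}(x))=(h_l,h_r)\,I(\mathrm{Stab}(x_0))\,(h_l,h_r)^{-1}$; thus it suffices to show that $I(\mathrm{Stab}(x_0))$ is exactly the diagonal subgroup $\Delta\subset \SO_0(2,1)\times \SO_0(2,1)$.

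For the base point, the key observation is that if $g(x_0)=x_0$, then both $\tau^l(g(x_0))$ and $\tau^r(g(x_0))$ are parallel transports along loops at $x_0$, so Lemma \ref{lm:fl} (triviality of the holonomy of $D^l$ and $D^r$ on $AdS_3$) gives
\[
 \tau^l(g(x_0))=\tau^r(g(x_0))=\mathrm{Id}_{T_{x_0}AdS_3}~.
\]
Plugging this into the definitions yields $g_l=g_r=dg(x_0)^{-1}$, so $(g_l,g_r)\in\Delta$. This is the main content of the lemma and the step I expect to be most delicate to write carefully, since one has to be sure that the parallel-transport map $\tau^l(x)$ (defined via any path from $x_0$ to $x$) really does reduce to the identity when $x=x_0$; but once triviality of the holonomy is invoked, the computation is immediate.

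To finish, I need surjectivity onto $\Delta$. The stabilizer $\mathrm{Stab}(x_0)$ is a Lie subgroup of $\SO_0(2,2)$ isomorphic to the identity component of $O(T_{x_0}AdS_3,\langle\cdot,\cdot\rangle)\cong \SO_0(2,1)$, hence has dimension $3$. The kernel of $I$ is the group generated by the antipodal map, which does not fix $x_0$; therefore $I$ restricted to $\mathrm{Stab}(x_0)$ is injective. The image is then a connected $3$-dimensional subgroup of the $3$-dimensional connected Lie group $\Delta\cong \SO_0(2,1)$, and hence equals $\Delta$. Combining this with the reduction of the first paragraph proves the lemma.
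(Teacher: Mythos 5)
Your proposal is correct and follows essentially the same route as the paper: reduce to $x=x_0$ by conjugation, observe that $g(x_0)=x_0$ forces $g_l=g_r=dg(x_0)^{-1}$ (the paper calls this ``clear by definition''; you justify it via the triviality of the $D^l,D^r$-holonomy, which is exactly why $\tau^l(x_0)=\tau^r(x_0)=\mathrm{Id}$), and then conclude equality with the diagonal by a dimension count. The extra details you supply (injectivity of $I$ on the stabilizer, connectedness of $\Delta$) are harmless refinements of the paper's one-line dimension argument.
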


\begin{proof}
It is sufficient to prove the statement in the case where $x=x_0$.
In that case it is clear by definition that if $g$ fixes $x_0$, then
$g_l=g_r=dg(x_0)^{-1}$. So the stabilizer of $x_0$ is contained
in the diagonal subgroup. Since those groups have the same dimension,
they must coincide.
\end{proof}

\fb{
Let us fix $p_0\in M$. For any loop $\gamma$ centered at $p_0$
let us denote by $\hol_l(\gamma), \hol_r(\gamma)\in SO_0(T_{x_0}M)\cong SO_0(2,1)$ the holonomy along 
$\gamma^{-1}$ with respect to $D^l$ and $D^r$. The reason why we consider the parallel transport along the
 inverse of $\gamma$ is that in this way 
$\hol_\bullet(\gamma\delta)=\hol_\bullet(\gamma)\hol_\bullet(\delta)$.

Since $D^l$ and $D^r$ are flat,  $\hol_l(\gamma),\hol_r(\gamma)$  only depend on the homotopy
class of $\gamma$. In particular two holonomy representations
 $\hol_l,\hol_r:\pi_1(M)\rightarrow SO_0(2,1)$ are associated to $D^l$ 
and $D^r$. Though the construction depends on the choice of a point $p_0$,
those representations are well-defined up to conjugation.
}


\begin{lemma} \label{lm:rho}
Up to conjugation we have that
\[
  I\circ \hol=(\hol_l,\hol_r)~.
\]
\end{lemma}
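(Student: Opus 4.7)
The plan is to unwind the definitions through the developing map. Let $\dev: \tilde M \to \AdS_3$ be an $\AdS$-developing map with basepoint $\tilde x_0$ mapped to $x_0$, satisfying $\dev \circ \gamma = \hol(\gamma) \circ \dev$ for every $\gamma \in \pi_1(M)$. By Lemma \ref{lm:fl}, the connection $D^l$ on $\AdS_3$ has trivial holonomy, so the parallel transport $\tau^l(y): T_{x_0} \AdS_3 \to T_y \AdS_3$ from the definition preceding the lemma is path-independent; the same holds for $D^r$. Since $\tilde M$ is simply connected, the pulled-back $D^l$-parallel transport $\tilde \tau^l(\tilde x): T_{\tilde x_0} \tilde M \to T_{\tilde x} \tilde M$ is likewise well-defined (and similarly for $D^r$).

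The first key step is to observe that $D^l$ and $D^r$ are defined canonically from the Lorentzian metric, orientation and time-orientation (through the Hodge star and cross-product), and hence are preserved by any oriented local isometry. In particular $\dev$ intertwines the two parallel transports: $d\dev(\tilde x) \circ \tilde \tau^l(\tilde x) = \tau^l(\dev(\tilde x)) \circ d\dev(\tilde x_0)$, and analogously for the $r$-version. This is the geometric content on which everything else rests.

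The second step is bookkeeping. For $\gamma \in \pi_1(M)$, the connection holonomy $\hol_l(\gamma)$ is computed by parallel-transporting along the lift $\tilde\gamma$ of a loop from $\tilde x_0$ to $\gamma \tilde x_0$ and then applying the deck transformation: after identifying $T_{x_0}M$ with $T_{\tilde x_0}\tilde M$, one has $\hol_l(\gamma) = d\gamma^{-1}(\gamma \tilde x_0) \circ \tilde \tau^l(\gamma \tilde x_0)$. Substituting the intertwining identity from the first step, using the equivariance differential $d\dev(\tilde x_0) \circ d\gamma^{-1}(\gamma \tilde x_0) = d\hol(\gamma)^{-1}(\hol(\gamma)x_0) \circ d\dev(\gamma \tilde x_0)$, and conjugating by the fixed linear isomorphism $d\dev(\tilde x_0): T_{\tilde x_0}\tilde M \to T_{x_0} \AdS_3$, the composition collapses to $d\hol(\gamma)^{-1}(\hol(\gamma) x_0) \circ \tau^l(\hol(\gamma) x_0)$. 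By the definition of $g_l$ given before the lemma, this is exactly $\hol(\gamma)_l$. Repeating the argument with $D^r$ in place of $D^l$ yields $\hol_r(\gamma) = \hol(\gamma)_r$ under the same conjugation. Combining, $I \circ \hol = (\hol_l, \hol_r)$ up to the single conjugation by $d\dev(\tilde x_0)$, which is absorbed by the ``up to conjugation'' clause in the statement.

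The proof requires no new geometric idea beyond Lemma \ref{lm:fl} and the invariance of $D^l$, $D^r$ under isometries. The main obstacle I expect is purely the bookkeeping: keeping track of basepoints through the deck action and the developing map, and checking that a single fixed identification $d\dev(\tilde x_0)$ simultaneously realizes the conjugation for every $\gamma$ and for both the left and the right factors. Once this is set up, the computation is entirely routine.
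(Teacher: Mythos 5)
Your proof is correct and follows essentially the same route as the paper: the paper's argument is a commutative diagram whose lower-left square encodes exactly your intertwining identity (that $\dev$, being a local isometry, carries $D^l$-parallel fields to $D^l$-parallel fields) and whose lower-right square encodes the equivariance $\dev\circ L=\hol(\gamma)\circ\dev$, with the single conjugation realized by $d\pi(\tilde p_0)\circ(d(\dev)(\tilde p_0))^{-1}$. Your bookkeeping of basepoints and the final collapse to $d\hol(\gamma)^{-1}\circ\tau_l(\hol(\gamma)x_0)=\hol(\gamma)_l$ matches the paper's computation.
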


\begin{proof}
Let us fix a universal covering map $\pi:\tilde M\rightarrow M$,
a base point $p_0\in M$, and a point $\tilde p_0\in\pi^{-1}(p_0)$.
Without loss of generality we may suppose that the developing map
sends $\tilde p_0$ to $x_0$.

Let $\gamma:[0,1]\rightarrow M$ be a closed loop in $M$ such that
$\gamma(0)=\gamma(1)=p_0$.  Consider the lift $\gammab$ of $\gamma$ to
$\tilde M$ with starting point $\tilde p_0$ and denote by
$L=L(\gamma)$ the covering automorphism such that $L(\tilde
p_0)=\gammab(1)$.  Since $\pi\circ L=\pi$ we have
\begin{equation}\label{eq:d0}
d\pi\circ dL=d\pi\,.
\end{equation}

Let $g=\hol(\gamma)$ and $\bar\tau_l$ be the $D^l$-parallel transport
along $\gammab^{-1}$ we have the following commutative diagram
\fb{
\begin{equation}\label{eq:diag}
\begin{CD}
T_{p_0}M                @=     T_{p_0}M                 @>\hol_l(\gamma)>>      T_{p_0}M\\
@Ad\pi AA                       @Ad\pi AA                                      @Ad\pi AA\\
T_{\tilde p_0}\tilde M   @>dL>> T_{L(\tilde p_0)}\tilde M  @>\bar\tau_l>>         T_{\tilde p_0}\tilde M\\
@Vd(dev)VV                        @Vd(dev)VV                                  @Vd(dev)VV\\
T_{x_0}AdS_3            @>dg>> T_{g(x_0)}AdS_3           @>\tau_l(g(x_0))^{-1}>> T_{x_0}AdS_3
\end{CD}~.
\end{equation}
}

Indeed, the commutativity of the squares on the upper row is easy to check.
The commutativity of the second square of the second lower row relies on the fact
that $dev$ sends $D^l$-parallel vector field on $\tilde M$
to $D^l$-parallel vector field on $AdS_3$. Finally the commutativity of the first square
of the lower row follows from the fact that, by definition of holonomy, $g\circ dev=dev\circ L$.

By diagram (\ref{eq:diag}),
identifying $T_{p_0}M$ with $T_{x_0}AdS_3$ through the map
$d\pi(\tilde p_0)\circ (d(dev)(\tilde p_0))^{-1}:
T_{x_0}AdS_3\rightarrow T_{p_0}M$, we have that
\fb{
$\hol_l(\gamma)=\tau_l(g(x_0))^{-1}\circ dg(x_0)=g_l$ and analogously
$\hol_r(\gamma)=\tau(g(x_0))^{-1}\circ dg(x_0)=g_r$}.
\end{proof}

\subsection{The left and right metrics} \label{ssc:lrmetrics}

Every smooth curve $V(t)=(x(t),v(t))$ in $TM$ can be regarded as a
vector field along the curve $x(t)=\pi(V(t))$, so we can consider its covariant derivative
with respect to $D^l$ and $D^r$.

There is a splitting of $T(TM)$ associated with the connections $D^l$ and $D^r$.
Namely we have
\[
   T(TM)=T^V(TM)\oplus H^l=T^V(TM)\oplus H^r
\]
where
\begin{itemize}
\item $T^V(TM)$ is the vertical tangent space, that is the tangent space to the fiber (it
is independent of the connection),
\item $H^l$ and $H^r$ are the horizontal spaces (depending on the connection):
a vector $\xi\in T_{(p_0,v_0)}(TM)$ lies in $H^l$ (resp. $H^r$) if and only if
there exists a $D^l$-parallel (resp. $D^r$-parallel) curve $V(t)=(x(t),v(t))$
with $\dot V(0)=\xi$.
\end{itemize}

Each of these splittings provides  a  linear projection
\[
P^l,P^r:T_{(p,v)}(TM)\rightarrow T^V_{(p,v)}(TM)=T_pM
\]
and we easily see that
$P^l(\xi)=\frac{D^lV}{dt}(0)$ whereas $P^r(\xi)=\frac{D^rV}{dt}(0)$ where
$V(t)=(x(t),v(t))$ is any curve in $TM$ such that $\dot V(0)=\xi$.

Notice that if $(x,v)\in T^{1,t}M$ (cf. Definition \ref{df:unittmlk}) and $\xi\in T_{(x,v)}(T^{1,t}M)$, we can construct
the curve $V(t)=(x(t),v(t))$ so that $\langle v(t),v(t)\rangle=-1$.
Since $D^l$ and $D^r$ are compatible with the metric, we get that
$P^l(\xi)$ and $P^r(\xi)$ are orthogonal to $v$ in $T_xM$, so either they are $0$
or they are space-like.

\begin{defi}
We call $M_l$ and $M_r$ the two degenerate metrics (everywhere of rank 2)
defined on $T^{1,t}M$ as follows:
\[
M_l(\xi)=||P^l(\xi)||^2\,,~~M_r(\xi)=||P^r(\xi)||^2\,.
\]
\end{defi}

By construction, $M_l$ and $M_r$ are symmetric quadratic forms on the
tangent space of $T^{1,t}M$, and they are semi-positive, of rank $2$ at
every point.

We will derive a more concrete expressions of metrics $M_l$ and $M_r$
that will be useful in the sequel.  We use a natural identification
based on the Levi-Civita connection $\nabla$ of $M$:
$$ \forall (x,v)\in T^{1,t}M\,,\quad T_{(x,v)}(T^{1,t}M)\simeq T_xM\times
v^\perp\subset T_xM\times T_xM~. $$ In this identification, given
$v'\in v^\perp$, the vector $(0,v')$, considered as a vector in
$T(T^{1,t}M)$, corresponds to a ``vertical'' vector, fixing $x$ and
moving $v$ according to $v'$.  And, given $x'\in T_xM$, the vector
$(x',0)$, considered as a vector in $T(T^{1,t}M)$, corresponds to a
``horizontal'' vector, moving $x$ according to $x'$ while doing a
parallel transport of $v$ (for the connection $\nabla$).

Notice that $P^l(0,v')=P^r(0,v')=v'$: indeed by definition
there exists a curve $V(t)=(x,v(t))$ in $T_{x}M$ whose derivative in $0$ is
$(0,v')$, and we easily see that its covariant derivative (for any connection),
coincides with $v'$.

On the other hand, given a vector $(x',0)$ in $T_{(x,v)}TM$,
it can be extended to a curve $V(t)=(x(t),v(t))$ which is $\nabla$-parallel
and such that $\dot x(0)=x'$.
So we have
\[
\frac{D^l}{dt}V(t)=\frac{DV}{dt}+v(t)\times \dot x(t)\,,
\]
\fb{and} we conclude that $P^l(x',0)=v\times x'$. Analogously $P^r(x',0)=-v\times x'$.
So we conclude that
$$ M_l((x',v'),(x',v')) = \| v' + v\times x'\|^2~,~~
    M_r((x',v'),(x',v')) = \| v' - v\times x'\|^2~. $$

\begin{lemma}
With those definitions:
\begin{itemize}
\item $M_l$ and $M_r$ vanish on the integral curves of the geodesic flow of $M$.
\item $M_l$ and $M_r$ are invariant under the
geodesic flow of $M$.
\end{itemize}
\end{lemma}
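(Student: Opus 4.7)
\emph{Approach.} The plan is to exploit the explicit expression
$M_l((x',v'),(x',v'))=\|v'+v\times x'\|^2$ (and its analogue for $M_r$) derived just above the statement, treating the two items separately: the first reduces to $v\times v=0$, and the second to a Jacobi field computation that uses, in an essential way, the constant curvature~$-1$ of the ambient metric.

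\emph{Vanishing on integral curves of the geodesic flow.} If $\phi_t(x_0,v_0)=(x(t),v(t))$ is such an orbit, then $\dot x=v$ and $\nabla_{\dot x}v=0$, so in the splitting $T_{(x,v)}(T^{1,t}M)\simeq T_xM\oplus v^\perp$ its tangent vector has horizontal part $x'=v$ and vertical part $v'=0$. Therefore $v'+v\times x'=v\times v=0$, giving $M_l=0$; by the same computation $v'-v\times x'=0$, so $M_r=0$ as well.

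\emph{Invariance.} Given $\xi\in T_{(x_0,v_0)}(T^{1,t}M)$, I extend it to a short smooth curve and push it forward by $\phi_t$: the result is a variation of geodesics whose Jacobi field $J(t)$ satisfies $J(0)=x'$ and $\nabla_{\dot x}J(0)=v'$, and $d\phi_t(\xi)$ is represented in the splitting at $(x(t),v(t))$ by $(J(t),\nabla_{\dot x}J(t))$. Hence
\[ M_l(d\phi_t\xi)=\|W(t)\|^2,\qquad W(t):=\nabla_{\dot x}J(t)+v(t)\times J(t). \]
The key step will be to establish the identity $\nabla_{\dot x}W=v\times W$; once this is in hand, $\tfrac{d}{dt}\|W\|^2=2\langle v\times W,W\rangle=0$ (the cross product is orthogonal to its arguments), so $\|W\|^2$ is constant and the invariance follows. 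To prove the identity one expands $\nabla_{\dot x}W$ using $\nabla_{\dot x}v=0$, then substitutes the Jacobi equation $\nabla_{\dot x}^{\,2}J=-R(J,v)v$; evaluating in constant curvature $-1$ gives $R(J,v)v=J+\langle J,v\rangle v$, and the Lorentzian triple cross product identity (the very one used earlier in the proof that $D^l$ is flat) yields $v\times(v\times J)=-J-\langle v,J\rangle v$, so that all non-$v\times W$ terms cancel. The argument for $M_r$ is identical with the sign of the cross product reversed throughout.

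\emph{Main obstacle.} There is no deep obstruction; the only care needed is in bookkeeping the signs in the Lorentzian curvature tensor and the triple-cross-product identity. Conceptually the cleanest reformulation would observe that $D^l_{\dot x}\dot x=0$ (since $\nabla_{\dot x}\dot x=0$ and $\dot x\times\dot x=0$), so on a simply connected open set the $D^l$-parallel trivialization identifies $T^{1,t}M$ with a product $U\times\mathbb H^2$ in which every geodesic flow orbit is constant on the $\mathbb H^2$ factor; since $M_l$ is by construction the pullback under the projection to $\mathbb H^2$ of the hyperbolic metric on that factor, both items of the lemma become transparent. This viewpoint also explains why the same proof applies to $M_r$.
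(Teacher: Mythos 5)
Your proof is correct, but for the second item it takes a more computational route than the paper. The paper proves invariance by considering the two-parameter family $W(s,t)=\phi_s(V(t))$, noting that $\tfrac{D^lW}{ds}=0$ along orbits of the geodesic flow (because $\nabla$-, $D^l$- and $D^r$-geodesics coincide), and then commuting the $D^l$-covariant derivatives in $s$ and $t$, which is legitimate precisely because $D^l$ was already shown to be flat; no Jacobi fields or explicit curvature formulas appear. Your argument unpacks exactly this: the identity $\nabla_{\dot x}W=v\times W$ is equivalent to $D^l_{\dot\gamma}W=0$, i.e.\ to the assertion that $P^l(d\phi_t\xi)$ is $D^l$-parallel along the geodesic, and your verification via the Jacobi equation, the constant-curvature expression for $R(J,v)v$ and the triple cross product identity is a re-derivation of the vanishing of the curvature of $D^l$ evaluated on $(v,J,v)$. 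The paper's version is shorter and works for any flat metric connection with the same geodesics as $\nabla$; yours is self-contained at the level of the explicit formula $M_l(x',v')=\|v'+v\times x'\|^2$ and makes the cancellation mechanism visible (at the cost of some sign bookkeeping that the abstract argument avoids). Your closing reformulation --- trivializing $T^{1,t}M$ by $D^l$-parallel transport so that $M_l$ becomes the pullback of the hyperbolic metric and the geodesic flow acts trivially on the $\HH^2$ factor --- is precisely the content and proof of Lemma \ref{local:lm}, which immediately follows this statement in the paper. The first item is handled identically in substance in both arguments: $v\times v=0$, equivalently the orbits are tangent to $H^l\cap H^r$.
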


\begin{proof}
We denote by $\phi_\bullet:TM\rightarrow TM$ the geodesic flow on $TM$.
Let us notice that the geodesic equation of the connection $\nabla$
coincides with the geodesic equation of $D^l$ and $D^r$. Thus
$\phi_\bullet$ can be regarded as the geodesic flow of the connection
$D^l$ ($D^r$) as well.

Since the orbits of the geodesic flow are tangent to the horizontal
space $H^l$ (resp. $H^r$),
$M_l$ and $M_r$ vanish on the direction tangent to the geodesic flow.

Given a point $(x,v)\in TM$ and $\xi\in T(TM)$, let us consider any curve $V(t)=(x(t),v(t))$ such that
$\xi=\dot  V(0)$. Putting $W(s,t)=\phi_s(V(t))=(y(s,t), w(s,t))$, by definition we have that
$y(\bullet,t)$ is a geodesic for any fixed $t$ and that $\frac{\partial y}{\partial s}(s,t)=w (s,t)$.
In particular $\frac{D^lW}{ds}=0$.

By definition,
\[
\frac{d\,}{ds}M_l(d\phi_s(\xi))=\frac{d}{ds}\left\langle\frac{DW}{dt}, \frac{DW}{dt}\right\rangle(s,0)\,.
\]
On the other hand, since $D^l$ is flat we have
\[
\frac{d\,}{ds}\left\langle\frac{DW}{dt},
\frac{DW}{dt}\right\rangle=2\left\langle\frac{D}{dt}\frac{DW}{ds},\frac{DW}{dt}\right\rangle=0
\]
and this shows that $M_l(d\phi_s(\xi))$ is constant.
\end{proof}

\begin{defi}
We denote by $G(M)$ the space of time-like maximal geodesics in $M$,
and by $m_l$ and $m_r$ the degenerate metrics on $G(M)$ induced by $M_l$ and
$M_r$, respectively.
\end{defi}

\begin{lemma}\label{local:lm}
$(G(M),m_l\oplus m_r)$ is locally isometric to $\HH^2\times \HH^2$.
\end{lemma}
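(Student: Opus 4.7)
The strategy is to build an explicit local isometry from $G(M)$ to $\HH^2\times \HH^2$ using the parallel transports of $D^l$ and $D^r$. Since the statement is local, I may replace $M$ by a simply connected open subset of $AdS_3$. By Lemma \ref{lm:fl}, the holonomies of $D^l$ and $D^r$ on $AdS_3$ are trivial, so after fixing a basepoint $x_0$ I obtain well-defined parallel transport maps $\tau^l,\tau^r:TAdS_3\to T_{x_0}AdS_3\cong \mathbb R^{2,1}$, which are linear isometries fibrewise and preserve time orientation. They therefore map each fibre of $T^{1,t}AdS_3$ to $\HH^2\subset \mathbb R^{2,1}$, and I consider
\[
\Phi:T^{1,t}AdS_3\to \HH^2\times \HH^2,\qquad (x,v)\mapsto (\tau^l(v),\tau^r(v)).
\]

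First I would verify that $\Phi$ is constant along geodesic flow orbits, so that it descends to a map $\bar\Phi:G(AdS_3)\to \HH^2\times\HH^2$. Along a time-like geodesic $\gamma$ with unit velocity $v(t)$, one has $\nabla_{\dot\gamma}\dot\gamma = 0$ and $\dot\gamma\times \dot\gamma = 0$, so $v(t)$ is parallel for both $D^l$ and $D^r$; hence $\tau^l(v(t))$ and $\tau^r(v(t))$ are independent of $t$.

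The core of the argument is the computation of $d\Phi$. For a curve $V(t)=(x(t),v(t))$ in $T^{1,t}AdS_3$ with $\dot V(0)=\xi$, the defining property of parallel transport yields $(d/dt)|_{0}\tau^l(v(t)) = \tau^l(P^l(\xi))$, and similarly for $\tau^r$. Since $\tau^l$ and $\tau^r$ are linear isometries onto $T_{x_0}AdS_3$, the pull-back of the product metric on $\HH^2\times\HH^2$ equals $\|P^l(\xi)\|^2+\|P^r(\xi)\|^2 = M_l(\xi)+M_r(\xi)$. Using the explicit formulas $P^l(x',v') = v'+v\times x'$ and $P^r(x',v') = v'-v\times x'$, one sees that $\ker d\Phi$ is exactly the line spanned by $(v,0)$, i.e.\ the geodesic flow direction (the two equations force $v'=0$ and $v\times x'=0$, and since $v$ is unit timelike the latter forces $x'\parallel v$). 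Hence $d\bar\Phi$ is injective, and since both $G(AdS_3)$ and $\HH^2\times\HH^2$ have dimension $4$, $\bar\Phi$ is a local diffeomorphism; combined with the metric identity it is a local isometry, proving the lemma.

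The only non-routine point is matching the pull-back metric with $m_l\oplus m_r$: one needs to recognise that differentiating a curve in $T^{1,t}AdS_3$ via $\tau^l$ is precisely the covariant derivative $P^l(\xi)$, which is exactly how $M_l$ was defined. Everything else reduces to linear algebra in the fibres together with the elementary behaviour of the cross product in $AdS_3$.
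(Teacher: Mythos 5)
Your proof is correct and follows essentially the same route as the paper: parallel transport for $D^l$ and $D^r$ to a fixed fibre identified with $\HH^2$, the identity $\langle d\phi^\bullet(\xi),d\phi^\bullet(\xi)\rangle=\|P^\bullet(\xi)\|^2$ (which the paper derives via a homotopy and flatness, where you invoke the standard derivative-of-parallel-transport identity), constancy along geodesic flow orbits, and non-degeneracy of $m_l\oplus m_r$ from the explicit formulas for $P^l$ and $P^r$. The only cosmetic difference is that you work in an open subset of $AdS_3$ itself rather than in the simply connected manifold $M$, which is harmless for a local statement.
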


\begin{proof}
Since the statement is local, we may suppose that $M$ is simply connected.
Let us fix a point $x_0\in M$.  Notice that the set of  time-like unit-vector at $x_0$,
say $T^{1,t}_{x_0}M$, is a space-like surface in $T_{x_0}M$ which is isometric
to the hyperbolic plane. We isometrically identify $T^{1,t}_{x_0}M$ with $\mathbb H^2$.

Let us consider the maps $\phi^l,\phi^r:T^{1,t}M\rightarrow T^{1,t}_{x_0}M=\mathbb H^2$ defined
by using the parallel transport for $D^l$ and $D^r$.

We claim that $M_l=(\phi^l)^*(g_\mathbb H)$ and $M_r=(\phi^r)^*(g_{\mathbb H})$.

To check the claim, let $V(t)=(x(t),v(t))$ for $t\in(-\epsilon,\epsilon)$ be
 any curve in $TM$ with $V(0)=(x,v)$ and $\dot V(0)=\xi$.
There is a homotopy $\sigma(t,s):(-\epsilon,\epsilon)\times[0,1]\rightarrow M$
such that $ \sigma(t,0)=x(t)$ and $\sigma(t,1)=x_0$.
The field $V$ can be uniquely extended to a field $W$ on $\sigma$ so that
$\frac{D^lW}{ds}=0$.

By definition we have that $W(t,1)=\phi^l(V(t))$, so $d\phi^l(\xi)=\frac{D^lW}{dt}(0,1)$.
In particular we have
\[
\left\langle d\phi^l(\xi), d\phi^l(\xi)\right\rangle=\left\langle\frac{D^lW}{dt}(0,1),\frac{D^lW}{dt}(0,1)\right\rangle\,.
\]
On the other hand, since $D^l$ is flat we have
\[
\frac{d}{ds}\left\langle\frac{D^lW}{dt}, \frac{D^lW}{dt}\right\rangle=
2\left\langle\frac{D^l\,}{dt}\frac{D^lW}{ds},\frac{D^lW}{dt}\right\rangle=0\,,
\]
so we deduce that
\[
\left\langle d\phi(\xi), d\phi(\xi)\right\rangle=\left\langle\frac{D^lW}{dt}(0,0), \frac{D^lW}{dt}(0,0)\right\rangle=
\left\langle\frac{D^lV}{dt}(0), \frac{D^lV}{dt}(0)\right\rangle=||P^l(\xi)||^2\,.
\]

We can obtain in the same way that the pull-back of the hyperbolic metric through $\phi^r$ is
$M^r$. In particular, considering the map $\phi(v)=(\phi^l(v),\phi^r(v))\in\mathbb H^2\times\mathbb H^2$,
$M_l\oplus M_r$ is the pull-back of the sum of hyperbolic metrics through $\phi$.

Notice that the orbits of the geodesic flow are horizontal for both $D^l$ and $D^r$ (this because
geodesics of $\nabla$ coincide with geodesics of $D^l$ and $D^r$), so we deduce that
$\phi$ is constant on the orbits of geodesic flow, so it induces a map
\[
\bar\phi: G(M)\rightarrow\mathbb H^2\times\mathbb H^2
\]
and we have that $\bar\phi^*(g_{\mathbb H}\oplus g_{\mathbb H})=m_l\oplus m_r$.

In order to conclude we should  prove that $\bar\phi$ is a local diffeomorphism.
This is equivalent to showing that $m_l\oplus m_r$ is non-degenerate.
Since the tangent space of $G(M)$ is the quotient of the tangent space of $T^{1,t}M$ along
the line tangent to the orbit of the geodesic flow, it is sufficient to prove that vectors
$\xi\in T_{(x,v)}(T^{1,t}M)$ such that $M_l(\xi)=M_r(\xi)=0$ are tangent to the orbit of the geodesic flow.

Taking such a  $\xi=(x',v')$, we deduce that $v'+v\times x'=v'-v\times x'=0$, so $v'=0$
and $v\times x'=0$. Thus $\xi=(x',0)$ with $x'$ parallel to $v$, and this is the condition to be tangent
to the geodesic flow.
\end{proof}

\begin{remark}
The proof of Lemma \ref{local:lm} shows that in the general case the
developing map of $m_l\oplus m_r$ is the map
\[
\bar\phi:\tilde G(M)=G(\tilde M)\rightarrow\mathbb H^2\times\mathbb H^2
\]
described above.
Its holonomy is given by the pairs of representations $(\hol_l,\hol_r)$
(up to the identification of $\pi_1(G(M))$ with $\pi_1(M)$).
\end{remark}

Note that there is another possible way to obtain the same hyperbolic metrics
$m_l$ and $m_r$, using the identification of $\HH^2\times \HH^2$
with $PSL(2,\R)\times PSL(2,\R)/O(2)\times O(2)$. We do not elaborate on this point
here since it appears more convenient to use local considerations.

\subsection{Transverse vector fields and associated hyperbolic metrics}
\label{ssc:transverse}

The construction of the left and right hyperbolic
surfaces associated to an AdS
3-manifold is based on the use of a special class of
surfaces, endowed with a unit time-like vector field behaving
well enough, in particular with respect to the singularities.


Let us first consider the case without particle:

\begin{defi} \label{df:muleft}
Let $M$ be any smooth AdS manifold.
Let $S\subset M$ be a space-like surface, and let $V$ be a field of time-like
unit vectors defined along $S$. It is {\bf transverse} if
\fb{for all $x\in S$, the maps $v\mapsto D^l_vV$ and $v\mapsto D^r_vV$ have rank
$2$}.
\end{defi}

It is not essential to suppose that $S$ is space-like, and the weaker
topological assumption that $S$ is isotopic in $M$ to a space-like
surface would be sufficient. The definition is restricted to space-like
surface for simplicity.

\begin{defi} \label{df:610}
We still assume that $M$ is a regular AdS spacetime. Let $S\subset M$ be a 
space-like surface, and let $V$ be a transverse vector
field on $S$. Let $\delta:S\rightarrow G(M)$ be the map sending a
point $x\in S$ to the time-like geodesic parallel to $V$ at $x$.
We call $\mu_l:=\delta^*m_l$ and $\mu_r:=\delta^*m_r$.
\end{defi}

Notice that the field $V$ can be regarded as a map $S\rightarrow T^{1,t}M$,
and we have $\mu_l=V^*(M_l)$ and $\mu_r=V^*(M_r)$. In particular we easily see
that
\[
  \mu_l(v,v)=||D^l_vV||^2 ~,~~\mu_r(v,v)=||D^r_vV||^2~.
\]
So the metrics $\mu_l$ and $\mu_r$ are not degenerate.

If $\tau_l,\tau_r:T^{1,t}\tilde M\rightarrow T^{1,t}_{x_0}\tilde M=\mathbb H^2$
are the maps obtained by parallel transport for $D^l$ and $D^r$ on the universal covering,
as in Lemma \ref{local:lm}, we have that the developing map of $\mu_l$ is the map
$dev_l(x)=\tau_l(\tilde V(x))$ where $\tilde V$ is the lift of $V$ on $\tilde S\subset \tilde M$.
Analogously $dev_r(x)=\tau_r(\tilde V(x))$ is a developing map for $\mu_r$.

Now consider the case where $M$ contains some particles, and denote by
$M_{reg}$ the smooth part of $M$. Let $S$ be a space-like surface meeting the particles
orthogonally and let $V$ be a transverse vector field on $S_{reg}=M_{reg}\cap S$. The field $V$
defines two hyperbolic metrics $\mu_l$ and $\mu_r$ on $S_{reg}$ with holonomy $\hol_l$ and
$\hol_r$ respectively. However   in general the behavior
of the metrics around the particles can be very degenerate.
We say that $V$ is a transverse vector field on $S$ if it satisfies the following
conditions,  which ensure that $\mu_l$ and $\mu_r$ are hyperbolic metrics
with cone singularities  around the particles.

\begin{defi}\label{part:defi}
Let $T$ be a particle in $M$ and $p$ be the intersection point of $T$
with $S$.  We consider a neighborhood $W$ of $p$ that is obtained by
glueing a wedge $\hat W\subset AdS_3$ of angle $\theta$ as explained
in \cite[3.7.1]{colI}. The intersection $\Delta=S\cap W$ corresponds
to a surface $\hat \Delta$ on $\hat W$, with $p$ corresponding to a
point $\hat p$ on $\hat \Delta$.
The surface $S$ is smooth around $p$ if $\hat \Delta$ can be extended
to a smooth surface in a neighborhood of $\hat p$.

The vector field $V$ is transverse around a particle $T$ if the
following conditions are satisfied:
\begin{itemize}
\item $V$ extends to a unit vector field in $p$ tangent to $T$.
\item
The induced vector field $\hat V$ on $\hat W$ extends to a smooth
vector field in a neighborhood of $\hat W$ in $AdS_3$,
\item The rank of $D^l\hat V$ and $D^r\hat V$ at $\hat p$ is $2$.
\end{itemize}
\end{defi}

First let us exhibit a large class of vector fields that
satisfy the conditions of this definition\fb{.}

\begin{lemma}\label{norm:lm}
If the cone angle around the particle is $\theta\in (0,2\pi), \theta\neq\pi$ and
$S$ is a smooth surface orthogonal to the particle,
its unit normal vector field satisfies the conditions of Definition \ref{part:defi} at $p$.
\end{lemma}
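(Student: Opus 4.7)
The plan is to verify the three bulleted conditions of Definition \ref{part:defi} in turn, for the unit time-like normal field $\hat V$ of the smooth extension $\hat\Delta \subset AdS_3$ of $\hat\Delta \cap \hat W$ through $\hat p$. The first condition is essentially immediate: since $\hat\Delta$ is smooth through $\hat p$ and orthogonal to the time-like geodesic $\hat T$, the unit normal $\hat V(\hat p)$ is collinear with the direction of $\hat T$, and so $V$ extends to $p$ as a unit vector tangent to $T$. For the second condition, I extend $\hat V$ from $\hat\Delta$ to a tubular neighborhood of $\hat\Delta$ in $AdS_3$ by parallel transport along the geodesics normal to $\hat\Delta$; since $\hat\Delta$ is smooth and space-like, this produces a smooth unit time-like vector field on a neighborhood of $\hat\Delta$, and hence in a neighborhood of $\hat W$ in $AdS_3$.

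The main content is the rank condition at $\hat p$. Using $D^l_v\hat V = \nabla_v\hat V + \hat V\times v$ and $D^r_v\hat V = \nabla_v\hat V - \hat V\times v$, one observes that the image of each map lies in $\hat V(\hat p)^\perp$, a space-like $2$-plane, so ``rank $2$'' means surjectivity onto that plane. In an orthonormal frame $(e_1,e_2)$ of $T_{\hat p}\hat\Delta$ diagonalising the shape operator $B$ of $\hat\Delta$ with principal curvatures $\lambda_1,\lambda_2$, and using that the Lorentzian cross product $v\mapsto\hat V\times v$ restricts to $\hat V^\perp$ as a rotation $J$ by $\pi/2$ with $J^2=-\mathrm{Id}$ (together with $\nabla_{\hat V}\hat V = 0$ for the parallel extension), a direct computation gives $D^l\hat V|_{\hat V^\perp} = -B + J$ and $D^r\hat V|_{\hat V^\perp} = -B - J$, both with determinant $\lambda_1\lambda_2 + 1$.

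The hard part will be showing that $\lambda_1\lambda_2 + 1 \neq 0$ at $\hat p$. My plan is to combine the Gauss equation in $AdS_3$, which expresses $\det B$ in terms of the intrinsic Gaussian curvature of $\hat\Delta$, with the hypothesis $\theta\neq\pi$. For $\theta \neq \pi$ the wedge gluing does not impose a half-turn symmetry on the extension, so in the exceptional case where the natural parallel extension yields a degenerate $-B + J$ on $\hat V^\perp$, one may perturb the off-surface values of $\hat V$ so that the additional contribution $\nabla_{\hat V}\hat V(\hat p)$ supplies the missing direction in $\hat V^\perp$ and restores rank $2$; in the generic case no such perturbation is needed.
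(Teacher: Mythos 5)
Your reduction of the third condition to the non-vanishing of $\det(-B\pm J)=\det B+1=\lambda_1\lambda_2+1$ at $\hat p$ is correct and matches the computation the paper uses elsewhere (for good surfaces), but the proposal stops exactly where the real content of the lemma begins, and the route you sketch for that last step does not work. The Gauss equation only translates the condition into $K\neq 0$ at $\hat p$, which is not automatic for an arbitrary smooth surface orthogonal to the particle; and the suggested fallback --- perturbing the off-surface values of $\hat V$ so that $\nabla_{\hat V}\hat V(\hat p)$ ``supplies the missing direction'' --- is not available. First, the lemma asserts the property for the unit normal field itself, not for a perturbation of it. Second, and more fundamentally, the non-degeneracy needed for $\mu_l,\mu_r$ is that of $v\mapsto D^l_v\hat V$ restricted to $T_{\hat p}\hat\Delta$; adding a contribution in the normal direction changes nothing about that restriction, so the degeneracy you are trying to repair would persist in the induced metrics on the surface.

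The missing idea, which is where the hypothesis $\theta\neq\pi$ actually enters, is the equivariance of $\hat V$ under the wedge identification. Since $\hat V$ is the pull-back of a field defined on $W$, at $\hat p$ one has $\nabla_{R(v)}\hat V=R\,\nabla_v\hat V$, where $R$ is the rotation of angle $\theta$ about the tangent to the particle. For $\theta\neq 0,\pi$ the only endomorphisms of the space-like plane $\hat V(\hat p)^\perp$ commuting with $R$ are the conformal ones, so $\nabla\hat V(\hat p)=\lambda I+\mu J$. Because $\hat V$ is the unit normal of a surface, $\nabla\hat V(\hat p)$ is self-adjoint, which forces $\mu=0$; hence the shape operator is umbilic at $\hat p$, $\lambda_1=\lambda_2=\lambda$, and $\det B+1=\lambda^2+1>0$. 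Without this equivariance-plus-symmetry argument there is no reason for $\lambda_1\lambda_2+1$ to be nonzero, so as written the proof has a genuine gap at its central step.
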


\begin{proof}
The first two conditions are easily verified.
Let us check the third condition.

Using the fact that $\hat V$ is the pull-back of a vector field on $W$,
at $\hat p$ we have that
\[
   \nabla_{R(v)} \hat V=R\nabla_v\hat V~,
\]
where $R:T_{\hat p}AdS_3\rightarrow T_{\hat p}AdS_3$ is the rotation of angle
$\theta$ with axis the line tangent to $T$, and $v$ is a vector orthogonal to $T$ and tangent
to the boundary of $W$.
If $\theta\neq \pi$
this implies that $\nabla \hat V(\hat p)=\lambda I+\mu J$ where $J$ is the rotation of $\pi/2$ around the line
tangent to $T$.

On the other hand, if $V$ is the normal field of $S$, we have that $\nabla \hat V$ is a self-adjoint
operator on $TS$. Thus we have that the skew-symmetric part of $\nabla\hat V$ must vanishes
at $p$. It follows that $\nabla \hat V(\hat p)=\lambda I$.

In particular, since the transformation $v\mapsto \hat V(p)\times v$
coincides with $J$, we deduce that
\[
D^l\hat V(\hat p)=\lambda I+J~,\qquad\qquad~D^r\hat V(\hat p)=\lambda I-J~,
\]
and the third condition in the definition follows.
\end{proof}

\begin{remark}
\label{rk:conepi}
When the cone angle is $\pi$, the same conclusion follows provided that $S$ is convex around
the particle.
\end{remark}

\begin{prop} \label{pr:mulr}
Let $M$ be a space-time with particles and $S$ be a closed smooth
surface orthogonal to the particles. If $V$ is a transverse field on
$S$, then $\mu_l$ and $\mu_r$ are hyperbolic metrics with cone
singularity. Moreover if $p$ is the intersection point of $S$ with a
particle of angle $\theta$, then $p$ is a cone point for both $\mu_l$
and $\mu_r$ of the same angle.
\end{prop}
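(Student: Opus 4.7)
The plan is to treat the regular part of $S$ and the particle intersection points separately. On $S_{\mathrm{reg}} = S \setminus T$, the transversality assumption requires that $v \mapsto D^l_v V$ and $v \mapsto D^r_v V$ have rank $2$, so the formulas
\[
\mu_l(v,v) = \|D^l_v V\|^2, \qquad \mu_r(v,v) = \|D^r_v V\|^2
\]
define non-degenerate positive definite $2$-tensors on $S_{\mathrm{reg}}$. Viewing $V$ as a lift of the map $\delta : S_{\mathrm{reg}} \to G(M_{\mathrm{reg}})$ of Definition \ref{df:610}, we have $\mu_l = \delta^*m_l$ and $\mu_r = \delta^*m_r$. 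Since Lemma \ref{local:lm} identifies $(G(M_{\mathrm{reg}}), m_l \oplus m_r)$ locally with $\HH^2 \times \HH^2$, each of $\mu_l$ and $\mu_r$ is a hyperbolic metric on $S_{\mathrm{reg}}$.

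Next I analyze a neighborhood of a particle intersection point $p \in S \cap T$, using the wedge model $\hat W \subset AdS_3$ of Definition \ref{part:defi}. The conditions imposed there on $V$ guarantee that $\hat V$ extends as a smooth unit time-like vector field to a genuine neighborhood of $\hat p$ in $AdS_3$, and that $\hat \Delta$ extends to a smooth surface through $\hat p$. Consequently $\hat V^* M_l$ is a smooth symmetric semi-definite $2$-tensor on a neighborhood of $\hat p$ in the smooth extension of $\hat \Delta$, and the rank-$2$ condition at $\hat p$ makes it non-degenerate there. Running the argument of Lemma \ref{local:lm} verbatim in the wedge (using that $D^l$ is a flat, metric-compatible connection on $AdS_3$), we see that $\hat V^* M_l$ is a smooth hyperbolic metric in a neighborhood of $\hat p$.

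To identify the cone angle, observe that the actual surface $S$ near $p$ is recovered from the smooth surface $\hat \Delta$ by restricting to the wedge $\hat W$ and gluing its two bounding half-planes via the rotation $R$ of angle $\theta$ around the axis of the particle. By the first condition of Definition \ref{part:defi}, $V(p)$ is tangent to this axis, so $R$ fixes $\hat p$ and its differential at $\hat p$ is a Euclidean rotation of angle $\theta$ in $T_{\hat p} AdS_3$ around $\hat V(\hat p)$. By Lemma \ref{lm:stab}, under the isomorphism $I : SO_0(2,2) \to SO_0(2,1) \times SO_0(2,1)$ the stabilizer of $\hat p$ corresponds to the diagonal subgroup, so $I(R)$ is a pair $(R^l_\theta, R^r_\theta)$ whose two factors fix a common point of $\HH^2$ (namely $\tau_l(\hat V(\hat p))$, resp.\ $\tau_r(\hat V(\hat p))$) and are both rotations of angle $\theta$. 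Combining this with the description of the developing map $dev_l(x) = \tau_l(\hat V(x))$ given after Definition \ref{df:610}, the holonomy of $\mu_l$ around the meridian loop at $p$ is precisely $R^l_\theta$, so $p$ is a cone point of $\mu_l$ of angle $\theta$. The argument for $\mu_r$ is identical.

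The genuinely substantive step is the last one: verifying that the angle of the left (resp.\ right) rotation factor of a rotation of $AdS_3$ around a time-like geodesic equals $\theta$, not $\theta/2$ or any other multiple. This requires matching the explicit description of $I$ coming from $\tau_l, \tau_r$ with the Killing field computation $dI_{\mathrm{id}}(X) = (D^lX(x_0), D^rX(x_0))$: if $X$ generates the rotation of angle $1$ around the axis through $\hat p$, then $D^lX(\hat p)$ and $D^rX(\hat p)$ act on the $2$-plane orthogonal to $\hat V(\hat p)$ as the standard infinitesimal rotation, integrating to rotations of angle $\theta$ on each factor; this is essentially the same linear algebra computation that appears in Lemma \ref{norm:lm}, giving $D^l \hat V = \lambda I + J$ and $D^r \hat V = \lambda I - J$. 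Once this is checked, all three paragraphs combine to yield the proposition.
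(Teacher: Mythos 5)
Your proposal is correct, and it coincides with the paper's proof in its first two steps: the regular part is handled by Lemma \ref{local:lm}, and near a particle one passes to the wedge model of Definition \ref{part:defi}, extends $\hat V$ to a smooth field on a smooth surface through $\hat p$, and observes that $(\Delta,\mu_\bullet)$ is the sector $(\hat\Delta,\hat\mu_\bullet)$ of a smooth hyperbolic disk with its two boundary rays glued. Where you genuinely diverge is in the identification of the cone angle. The paper argues purely infinitesimally: by the computation of Lemma \ref{norm:lm}, $D^\bullet V(\hat p)=\lambda I\pm J$ is a conformal map of $T_{\hat p}\Sigma$, so the $\mu_\bullet$-angle of the sector $P$ equals its AdS-angle $\theta$ (with the case $\theta=\pi$ treated separately, a half-plane having angle $\pi$ for every metric). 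You instead compute the holonomy of the meridian, using Lemma \ref{lm:stab} to see that the left and right factors of the gluing rotation $R$ are both elliptic of angle $\theta$. This buys you a uniform treatment of $\theta=\pi$ and connects the local picture to the global holonomy statements used later, but it determines the cone angle only modulo $2\pi$: to conclude that the angle is $\theta$ rather than $\theta+2\pi$ you must still invoke the fact that $\hat\Delta$ is a proper subsector of the smooth disk $\Sigma$, so its $\hat\mu_\bullet$-angle lies in $(0,2\pi)$. That observation is implicit in your second paragraph but should be stated when you draw the conclusion. One small correction to your last paragraph: the linear algebra needed there concerns the Killing field $X$ generating $R$ (for which $X(\hat p)=0$, so $D^lX(\hat p)=D^rX(\hat p)=\nabla X(\hat p)$ is the standard infinitesimal rotation), not the formula $D^l\hat V=\lambda I+J$ of Lemma \ref{norm:lm}, which is the ingredient of the paper's conformality argument rather than of your holonomy argument.
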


\begin{proof}
The metrics $\mu_l$ and $\mu_r$ are defined on the smooth part
$S_{reg}=S\cap M_{reg}$ and are hyperbolic by Lemma \ref{local:lm}.

Since $V$ is smooth at any particle, it is easy to check that $D^lV$
and $D^rV$ are uniformly bounded operators of $TS_{reg}$. This implies
that $\mu_\bullet$ is bi-Lipschitz to the first fundamental form. In
particular the completion of $(S_{reg},\mu_\bullet)$ is canonically
identified with $S$.

Let $p$ be the intersection point of $S$ with a particle.  A
neighborhood $W$ of $p$ in $M$ is obtained by glueing the boundary of
a wedge $\hat W$ of angle $\theta$ in $AdS_3$.  Let $\Delta=S\cap W$
and $\hat\Delta$ the corresponding surface in $\hat W$.

By hypothesis, $\hat\Delta$ is a sector of a smooth surface $\Sigma$
around $\hat p$ in $AdS_3$ orthogonal to the edge of $\hat W$, and
$\hat V$ can be extended to a smooth vector field on $\Sigma$.

In particular the metrics $\hat\mu_\bullet$ on $\hat\Delta$ extend to
smooth hyperbolic metrics on $\Sigma$ and $(\Delta,\mu_\bullet)$ is
obtained by gluing the boundary of $(\hat\Delta,\hat\mu_\bullet)$ by a
rotation around $\hat p$.

Let us consider in $T_{\hat p}AdS_3$ the sector $P$ of vectors tangent
to curves contained in $\hat\Delta$. It is clearly a sector of angle
$\theta$ for the AdS metric. If we show that $P$ is a sector of
angle $\theta$ also for $\mu_\bullet$, then the result will easily
follows.

Notice that if $\theta=\pi$, then $P$ is a half-plane, so the angle is
$\pi$ for any metric.  If $\theta\neq\pi$, as in Lemma \ref{norm:lm},
we have that $D_\bullet V$ is a conformal transformation at $\hat p$. 
Since $\mu_l(\bullet,\bullet)=\langle D^l_\bullet V,
D^l_\bullet V\rangle$ we see that the angle of $P$ with respect to
$\mu_l$ is still $\theta$ (and analogously for $\mu_r$).
\end{proof}

\subsubsection*{Note.}

The reason this paper is limited to manifolds with massive particles
--- rather than more generally with interacting singularities as in \cite{colI} --- is that we
do not at the moment have good analogs of those surfaces with transverse
vector fields when other singularities, e.g. tachyons, are present.

\subsection{A special case: good surfaces}
\label{ssc:good}

The previous construction admits a simple special case, when the
time-like vector field is orthogonal to the surface (which then
has to be space-like).

\begin{defi}
Let $M$ be an AdS manifold with interacting particles. Let $S$ be a
smooth space-like surface. $S$ is a {\it good} surface if:
\begin{itemize}
\item it does not contain any interaction point,
\item it is orthogonal to the particles,
\item the curvature of the induced metric is negative,
\item the intersections of $S$ with the particles of angle $\pi$ are locally convex.
\end{itemize}
\end{defi}

Note that, given a good surface $S$, one can consider the equidistant
surfaces $S_r$ at distance $r$ on both side. For $r$ small enough
(for instance, if $S$ has principal curvature at most $1$,
when $r\in (-\pi/4,\pi/4)$), $S_r$ is a smooth
surface, and it is also good. So from one good surface one gets a
foliation of a neighborhood by good surfaces.

The key property of good surfaces is that their unit normal vector
field is a transverse vector field, according to the definition given
above. This simplifies the picture since the left and right metrics
are defined only in terms of the surface, without reference to a
vector field. However the construction of a good surface seems to
be quite delicate in some cases, so that working with a more general
surface along with a transverse vector field is simpler.

\begin{lemma}
Let $S$ be a good surface, let $u$ be the unit normal vector field on
$S$, then $u$ is a transverse vector field.
\end{lemma}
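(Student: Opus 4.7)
The plan is to verify in turn the two sets of conditions defining a transverse vector field, namely Definition \ref{df:muleft} at each regular point of $S$ and Definition \ref{part:defi} at each intersection of $S$ with a particle.

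At a particle, since $S$ is orthogonal to the particle, the unit normal $u$ is tangent to the particle at the intersection point, which is the first condition of Definition \ref{part:defi}. The smoothness of $S$ at such a point (which is part of the good-surface hypothesis) yields at once the second condition, namely that the induced field $\hat u$ extends smoothly in a neighborhood of $\hat p$ in $AdS_3$. The rank-$2$ condition at $\hat p$ is exactly the content of Lemma \ref{norm:lm} when the cone angle $\theta \neq \pi$, and follows from the remark immediately after Lemma \ref{norm:lm} when $\theta = \pi$, since the local convexity of $S$ at angle-$\pi$ particles is built into the definition of a good surface.

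It remains to verify the rank condition of Definition \ref{df:muleft} at an arbitrary $x \in S_{\reg}$. Applying the formula $D^l_v u = \nabla_v u + u\times v$, note that $\langle u, u \rangle \equiv -1$ forces $\nabla_v u \in u^\perp = T_x S$, and there $\nabla_v u = B(v)$ for $B$ the shape operator of $S$, which is self-adjoint with respect to the induced Riemannian metric. The map $v \mapsto u\times v$ also sends $T_xS$ into itself (both outputs are orthogonal to $u$ and to $v$) and acts as the rotation $J$ by a quarter turn in the oriented plane $T_xS$. Hence $D^l_v u = (B+J)(v)$ and $D^r_v u = (B-J)(v)$, and what must be checked reduces to $\det(B+J) \neq 0$ and $\det(B-J) \neq 0$.

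Diagonalizing $B$ in an orthonormal basis of $T_xS$ with eigenvalues $\lambda_1, \lambda_2$, a direct calculation in this basis gives $\det(B\pm J) = \lambda_1\lambda_2 + 1 = \det(B) + 1$. The Gauss equation for a space-like surface in a Lorentz $3$-manifold of constant curvature $-1$ reads $K = -1 - \det(B)$, so $\det(B \pm J) = -K$. Since a good surface has $K < 0$, this quantity is strictly positive, whence $B+J$ and $B-J$ are both invertible and the two maps are of rank $2$. The main technical point is keeping track of the correct sign in the Gauss equation: in Lorentzian signature with time-like unit normal, the factor $\epsilon = \langle u, u\rangle = -1$ flips the sign of $\det(B)$ relative to the Riemannian case, a point that can be confirmed by inspecting the simplest model, namely the equidistant surfaces of a totally geodesic space-like $\mathbb H^2$ in $AdS_3$.
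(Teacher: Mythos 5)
Your proof is correct and follows essentially the same route as the paper: writing $D^{l,r}_v u = \pm J v + \nabla_v u$ with $\nabla_v u$ given by the shape operator, computing $\det(B\pm J)=\det(B)+1=-K$ via the Lorentzian Gauss equation, and concluding from $K<0$. The only addition is that you also explicitly verify the conditions of Definition \ref{part:defi} at the particle points by invoking Lemma \ref{norm:lm} and the remark following it, which the paper leaves implicit; this is a harmless (indeed welcome) extra check rather than a different argument.
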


\begin{proof}
Let $x\in S_{reg}$ and let $v\in T_xS$. By definition,
$$ D^l_v u = \nabla_vu + u\times v = -Bv + Jv~, $$
$$ D^r_v u = \nabla_vu - u\times v = -Bv - Jv~, $$
where $B$ is the shape operator of $S$ and $J$ is the complex structure
of the induced metric on $S$. If $S$ is a good surface then its
induced metric has curvature $K<0$. But
$\det(-B\pm J) = \det(B)+1 = -K$, so that $D^l_vu$ and $D^r_vu$
never vanish for $v\neq 0$. This means precisely that $u$ is a
transverse vector field along $S_{reg}$. 

Now the lemma follows from Lemma \ref{norm:lm} and Remark \ref{rk:conepi}.
\end{proof}

\subsubsection*{Example}

Let $s_0$ be a space-like segment in $AdS_3$ of length
$l>0$. Let $d_0, d_1$ be disjoint time-like lines containing the endpoints
of $s_0$ and orthogonal to $s_0$,
chosen so that the angle between the (time-like) plane $P_0$
containing $s_0, d_0$ and the (time-like) plane $P_1$ containing $s_0$
and $d_1$ is equal to some $\theta\in\mathbb R$.
Let $W_0$ (resp. $W_1$) be wedges with
axis $d_0$ (resp. $d_1$) not intersecting $s_0$ or $d_1$ (resp $d_0$)
(see Figure \ref{fig:1}).

Let $M_{\theta}$ be the space obtained from $AdS_3\setminus W_0\cup W_1$ by
gluing isometrically the two half-planes in the boundary of $W_0$
(resp. $W_1$), and let $M_{ex}:=M_\theta$ for $\theta=l$.
We will see that $M_{ex}$ does not contain any good surface, or
even any surface with a transverse vector field.

\begin{figure}
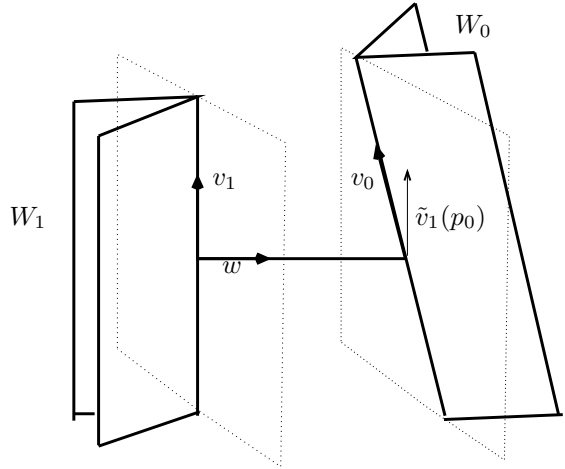

\begin{center}
\input example.pstex_t
\end{center}
\caption{A simple example with no transverse vector field.}
\label{fig:1}
\end{figure}

Let $x_0$ be the end-point of $s_0$ contained in $d_0$.  Let us
identify $SO_0(2,1)$ with the isometry group of $T_{x_0}AdS_3$.  By
definition the holonomy $g_0$ of a loop of $M$ around $d_0$ is a
rotation of axis $d_0$. So $(g_0)_l=(g_0)_r$ are elliptic
transformations with fixed point the vector $v_0$ tangent to $d_0$

On the other hand, if $g_1$ is the holonomy of a loop around $d_1$, 
$(g_1)_l$ and $(g_1)_r$ are elliptic
transformations with fixed point the vectors $\tau_l(v_1)$ and
$\tau_r(v_1)$ respectively, where $v_1$ is the direction tangent to
$d_1$ at the end-point $p_1\in d_1\cap s_0$.  Now if $\tilde v_1$ is
the $\nabla$-parallel field on $s_0$ extending $v_1$ on $s_0$ and $w$
is the unit vector field tangent to $s_0$ and pointing to $p_0$ we
have by our assumption
\[
        v_0=\ch(\theta)\tilde v_1(p_0)\pm \sh(\theta) w\times\tilde v_1(p_0)
\]
(where the sign $\pm$ depends on the way $d_1$ is turned with respect to $d_0$).

On the other hand, the $D^l$ and $D^r$-parallel extensions of $v_1$ along $s_0$
 are respectively
\[
\tau_l(v_1)(p)=\ch(s)\tilde v_1(p)+\sh(s)w\times \tilde v_1(p)\, ,\qquad
\tau_l(v_1)(p)=\ch(s)\tilde v_1(p)+\sh(s)w\times \tilde v_1(p)
\]
where $s$ is the distance on $s_0$ between $p$ and $p_1$.

In particular, assuming $\theta=l$, either $\tau_l(v_1)$ or
$\tau_r(v_1)$ coincides with $v_0$.  This implies that either the left
or the right holonomy is an elementary representation, so it cannot be
the holonomy of a hyperbolic disk with two cone singularities.

Notice that if $\theta<l$, $M_{\theta}$ does contain a
space-like surface with a transverse vector field (we leave the
construction to the interested reader) but with a left hyperbolic
metric, say $\mu_l(\theta)$, which has two cone singularities which
``collide'' as $\theta\rightarrow l$. (This can be seen easily by
taking a surface which contains $s_0$.) If $M_{ex}$ admitted a
surface with a transverse vector field, it could have only one cone
singularity (as it is seen by considering the limit $M_\theta\rightarrow
M_{ex}$), this is impossible.

Note that $M_{ex}$ is obviously not globally hyperbolic, and it contains no
closed space-like surface, it was chosen for its simplicity.

\subsection{Changing the transverse vector field and the space-like slice}
\label{sub:changing}

In this section we will consider the same framework as in the previous
section.  In particular we fix an AdS manifold $M$ with particles, a
closed space-like surface in $M$ and a transverse vector field $V$ on
$S$.  We will investigate how the metrics $\mu_\bullet$ change when
deforming the surface $S$ and the vector field.  The first basic
result is that the class of isotopy of $\mu_\bullet$ is independent of
$S$ and $V$. More precisely, if $S$ and $S'$ are two different
surfaces in $M$ we will prove that there \fb{exist} isometries
$\phi_\bullet:(S,\mu_\bullet)\rightarrow(S',\mu'_\bullet)$ such that
the induced map $\phi_*:\pi_1(S_{reg})\rightarrow\pi_1(S'_{reg})$ makes the
following diagram commutative (up to conjugation)
\[
\begin{CD}
\pi_1(S_{reg})@>i_*>>\pi_1(M_{reg})\\
@V\phi_*VV       @VIdVV\\
\pi_1(S'_{reg})@>i'_*>>\pi_1(M_{reg})
\end{CD}~.
\]
Notice that the commutativity of the diagram determines the isotopy
class of $\phi$.  More geometrically $\phi$ is isotopic to any map
$S\rightarrow S'$ obtained by following the flow of any time-like
field tangent to the particles.

\begin{lemma} \label{lm:surface}
Given $S$, $\mu_l$ and $\mu_r$ do not depend (up to isotopy) on the choice
of the transverse vector field $V$. Moreover, $\mu_l$ and $\mu_r$ do not
change (again up to isotopy) if $S$ is replaced by another surfaces
isotopic to it.
\end{lemma}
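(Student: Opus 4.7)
The plan is to combine two ingredients: first, for any admissible pair $(S,V)$ the pair $(\mu_l, \mu_r)$ is a pair of hyperbolic cone-metrics whose holonomies are the fixed representations $(\hol_l, \hol_r)$ and whose cone angles are the particle angles (and hence also fixed); second, a local rigidity statement for cone hyperbolic structures in the angle range $(0,2\pi)$ will reduce the question to the ability to connect any two admissible pairs $(S_0, V_0)$ and $(S_1, V_1)$ by a continuous family.

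First I would record that the developing map of $\mu_l$ lifted to $\tilde S$ is exactly $dev_l(x) = \tau_l(\tilde V(x))$, as noted after Definition \ref{df:610}. Since $\tau_l$ is built from the flat connection $D^l$ on $M_{reg}$, the holonomy is $\hol_l$ regardless of the choice of $(S,V)$; the same holds on the right. The previous proposition gives the claim about cone angles. So changing $(S,V)$ produces hyperbolic cone-metrics in a single Teichm\"uller space of hyperbolic cone structures on $S$ with prescribed angles, and with prescribed holonomy.

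Second I would connect $(S_0, V_0)$ to $(S_1, V_1)$ through a smooth family of admissible pairs $(S_t, V_t)$. For fixed $S$, the space of transverse vector fields is path-connected: each fibre of $T^{1,t}M|_S$ is a hyperbolic plane, hence convex, and transversality is an open condition, so convex interpolation in each fibre followed by a small generic perturbation works; the boundary conditions of Definition \ref{part:defi} at the particles are preserved since they are linear constraints. When $S_1$ is isotopic to $S_0$, the isotopy can be realised by the flow of a time-like vector field on $M$ tangent to the particles, yielding a continuous family $S_t$ of space-like surfaces orthogonal to the particles along which one chooses $V_t$. Pulling back $\mu_l^{(t)}$ to $S_0$ via the isotopy gives a continuous one-parameter family of hyperbolic cone-metrics on $S_0$ with constant holonomy $\hol_l$ and constant cone angles.

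The conclusion would then follow from the local injectivity of the holonomy map on the Teichm\"uller space of hyperbolic cone structures on $S$ with fixed cone angles in $(0,2\pi)$: a continuous family of such structures with constant holonomy is trivial up to isotopy, so $\mu_l^{(0)}$ and $\mu_l^{(1)}$ are related by an ambient isotopy of $S_0$, and symmetrically for $\mu_r$. The main obstacle is exactly this rigidity step when some cone angles lie in $(\pi, 2\pi)$: the classical Hodgson--Kerckhoff type argument covers only small cone angles, and one must appeal instead to the general cone-manifold deformation theory of Mazzeo--Montcouquiol and Weiss (\cite{mazzeo-montcouquiol, weiss:09}), which is precisely designed to cover the full range used in this paper. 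A subsidiary technical point is the verification that the interpolating family $(S_t, V_t)$ can be kept admissible up to the particles (space-likeness of $S_t$, orthogonality to particles, and the three bulleted conditions of Definition \ref{part:defi}); each of these is an open condition, so a generic smooth interpolation will satisfy them.
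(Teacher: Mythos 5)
Your first step --- that for any admissible pair $(S,V)$ the metrics $\mu_l,\mu_r$ are hyperbolic cone-metrics whose holonomies are the fixed representations $\hol_l,\hol_r$ and whose cone angles are the particle angles --- is exactly the paper's first step. But the way you convert ``same holonomy'' into ``isotopic'' diverges from the paper and contains a genuine gap. The paper does not use local rigidity at all: it proves the \emph{global} statement (Lemma \ref{lm:holonomy}) that a closed hyperbolic surface with cone angles in $(0,2\pi)$ is uniquely determined up to isotopy by its holonomy, via a geodesic-triangulation argument (existence of minimizing geodesics between cone points, Sublemma \ref{ssl:trian}, and the fact that edge lengths and realizability of triangles are read off from the holonomy). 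With that global statement in hand there is nothing to connect: two metrics with equal holonomy are isotopic, full stop.

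Your substitute --- local rigidity plus a path of admissible pairs $(S_t,V_t)$ --- fails at the connectedness step. Transversality of $V$ is the requirement that the linear maps $v\mapsto D^l_vV$ and $v\mapsto D^r_vV$, from the $2$-dimensional space $T_xS$ to the $2$-dimensional space $V(x)^\perp$, be invertible at \emph{every} point of the surface; i.e.\ the section must avoid a codimension-one determinant-zero locus over a $2$-dimensional base. A ``small generic perturbation'' of a convex interpolation does not achieve this (genericity only avoids strata of codimension $>2$), and the space of transverse fields on a fixed $S$ is an open set that has no reason to be path-connected --- for instance the sign of $\det(D^lV)$ is locally constant on components, so fields realizing opposite signs cannot be joined. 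The lemma is nevertheless true for such fields, which is precisely why the global rigidity route is needed. A secondary issue: the rigidity you invoke from \cite{mazzeo-montcouquiol,weiss:09} concerns $3$-dimensional hyperbolic cone-manifolds; what is needed here is a statement about $2$-dimensional cone surfaces, which is what the paper proves directly.
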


The proof uses a basic statement on hyperbolic surfaces with
cone singularities. Although this result might be well known,
we provide a  proof for completeness.

\begin{lemma} \label{lm:holonomy}
A closed hyperbolic surface with cone singularities of angle less than
$2\pi$ is uniquely determined by its holonomy.
\end{lemma}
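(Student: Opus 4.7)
The approach is to show, via developing maps and local rigidity at a cone point, that two hyperbolic cone metrics with the same holonomy induce the same developing map up to the centralizer of that holonomy---which is trivial for non-elementary representations. Let $(\Sigma,\mu_1)$ and $(\Sigma,\mu_2)$ be two closed hyperbolic cone surfaces with the same cone points $p_1,\ldots,p_n$, angles $\theta_j\in(0,2\pi)$, and common holonomy representation $\rho:\pi_1(\Sigma_{\reg})\to PSL(2,\R)$ (after absorbing the conjugation freedom). I pick developing maps $dev_i:\tilde\Sigma_{\reg}\to\HH^2$ equivariant for $\rho$; each $dev_i$ is a local isometry of the lifted metric $\tilde\mu_i$ to the standard metric on $\HH^2$, and extends continuously to every lift $\tilde p_j$ of a cone point, sending it to the unique fixed point $q_j\in\HH^2$ of $\rho(\gamma_j)$, a point that depends only on $\rho$ and not on the metric.

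Fix a cone point $p_j$ and a lift $\tilde p_j$. In a small punctured neighborhood $W$ of $\tilde p_j$, each $dev_i$ realizes the local hyperbolic cone of angle $\theta_j$ as a punctured wedge at $q_j\in\HH^2$; by the rigidity of the standard cone of angle $\theta_j<2\pi$, the two realizations differ by a rotation of $\HH^2$ fixing $q_j$. Thus there exists $R\in PSL(2,\R)$ with $R(q_j)=q_j$ and $dev_1=R\circ dev_2$ on $W$. Both $dev_1$ and $R\circ dev_2$ are real-analytic maps of the connected real-analytic manifold $\tilde\Sigma_{\reg}$ into $\HH^2$, so their agreement on the open set $W$ propagates, by the identity principle for real-analytic maps, to all of $\tilde\Sigma_{\reg}$. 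Now $\rho$-equivariance of both sides forces $R\rho(\gamma)=\rho(\gamma)R$ for every $\gamma\in\pi_1(\Sigma_{\reg})$, so $R$ lies in the centralizer of $\rho(\pi_1(\Sigma_{\reg}))$ in $PSL(2,\R)$. For non-elementary $\rho$, which is the relevant case (the few degenerate low-complexity instances, such as spheres with three or four cone points, can be handled separately by the uniqueness of hyperbolic triangles or polygons with prescribed angles), this centralizer is trivial, so $R=\mathrm{Id}$ and $dev_1=dev_2$ on all of $\tilde\Sigma_{\reg}$.

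Consequently $\tilde\mu_1=dev_1^{*}g_{\HH^2}=dev_2^{*}g_{\HH^2}=\tilde\mu_2$ on $\tilde\Sigma_{\reg}$, hence $\mu_1=\mu_2$ on $\Sigma_{\reg}$; by the uniqueness of the metric completion, $\mu_1=\mu_2$ on all of $\Sigma$. The main obstacle in this plan is the interplay between the local and global rigidity: one must show that the rotational freedom left by the local model at a cone point (a rotation about the image of the tip) is incompatible with the global $\rho$-equivariance unless it is trivial. This reduces to showing that the centralizer of a non-elementary subgroup of $PSL(2,\R)$ is trivial, which is standard but has to be checked (and replaced by a direct argument) in the degenerate configurations where $\rho$ fails to be non-elementary.
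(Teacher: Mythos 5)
There is a genuine gap, and it sits in the local step at the cone point. You assert that on a punctured neighborhood $W$ of a lift of $p_j$ one has $dev_1=R\circ dev_2$ for some rotation $R$ fixing $q_j$. But $dev_i^*g_{\HH^2}=\tilde\mu_i$, so the identity $dev_1=R\circ dev_2$ is \emph{equivalent} to $\mu_1=\mu_2$ on $W$: you are assuming near $p_j$ exactly what is to be proved. The ``rigidity of the standard cone'' only gives an abstract isometry $\psi$ from a $\mu_1$-neighborhood of $p_j$ to a $\mu_2$-neighborhood (a nontrivial self-map of the surface, not the identity), i.e.\ $dev_1=dev_2\circ\tilde\psi$ locally. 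The argument then requires propagating $\psi$ (equivalently, analytically continuing $dev_2^{-1}\circ dev_1$) over all of $\Sigma_{\reg}$, and this is precisely where holonomy-rigidity of incomplete $(G,X)$-structures fails in general: $dev_2$ is not injective, the regular part is incomplete, and the continuation of the local isometry can run into a cone point of $\mu_2$ or acquire monodromy. (Grafting of projective structures is the standard cautionary example: same holonomy, different structure, even though the two structures are locally isomorphic everywhere.) Relatedly, your conclusion $\mu_1=\mu_2$ as tensors on $\Sigma$ is strictly stronger than the lemma and is false: pulling $\mu_1$ back by a diffeomorphism isotopic to the identity fixing the cone points changes the tensor but not the holonomy representation. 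The correct statement, and what the paper proves, is uniqueness up to isotopy.

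The paper avoids all of this by a global geometric argument: using that cone angles are less than $2\pi$, it builds a geodesic triangulation of $(S,\mu_0)$ with vertices at the cone points (Sublemma \ref{ssl:trian}), then shows that both the realizability of each triangle and the lengths of its edges are read off from the holonomy alone (via traces of boundary holonomies of edge neighborhoods), so the same combinatorial triangulation is geodesic with the same edge lengths for $\mu_1$; gluing triangles gives the isometry. If you want to rescue a continuation-style proof you would need a global input of this kind (a convexity or triangulation statement) to control the continuation of the local isometry past the cone points; the centralizer computation at the end, while correct, is not where the difficulty lies.
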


Let $S$ be a closed surface with marked points $x_1,\cdots, x_n$,
let $\theta_1, \cdots, \theta_n\in (0,2\pi)$, and let $\mu_0, \mu_1$ be two
hyperbolic metrics on $S$ with cone singularities of angles $\theta_i$
(resp. $\theta'_i$) on the $x_i$, $1\leq i\leq n$. We suppose that
$\mu_0$ and $\mu_1$ have the same holonomy, and will prove that $\mu_0$
is isotopic to $\mu_1$.

The holonomy of a short curve around a singular point $x_i$ is a rotation
of angle the cone angle at $x_i$. Since $\mu_0$ and $\mu_1$ have the same
holonomy at $x_i$, we see already that $\theta_i=\theta'_i$ for all
$1\leq i\leq n$.

\begin{sublemma}\label{ssl:triang}
There exists a triangulation $T_0$ of $S$ with vertices equal to the
$x_i$, with as edges segments that are geodesic for $\mu_0$.
\end{sublemma}

\begin{proof}
A basic point on metrics with cone singularity with angle less than
$2\pi$ is that any two cone points are connected by a minimizing
geodesic which does not pass through any other cone singularity.
Given a curve $\gamma_0$ between two singular points $x_i$ and
$x_j$, there is a unique curve $\gamma_1$ between $x_i$ and $x_j$
which can be deformed to $\gamma_0$ in the complement of the $x_k$,
and which has minimal length among all such curves. However
$\gamma_1$ can go through some of the $x_k$.

It follows that there is a graph $\Gamma$ embedded in $S$ with vertices
$x_i$ having edges segments with are geodesic for $\mu_0$.

Cutting $S$ along $\Gamma$, we get a surface $\hat S$ with piecewise
geodesic boundary such that the total angle at any vertex of the boundary is
in $(0,2\pi)$. Any loop $c$ in $\hat S$  centered at some vertex of $\partial\hat S$
can be deformed to a geodesic curve (that may possibly contain some segment
of the boundary). In particular, cutting $\hat S$ along geodesic curves which are not
isotopic to the boundary, we eventually get a decomposition of $S$ in
hyperbolic disks $D_1,\ldots, D_n$ which have piecewise geodesic boundary, such that
vertices of $\partial D_i$ correspond to cone points of $S$.

To conclude the proof it is sufficient to show that each $D_i$ admits a geodesic
triangulation with vertices corresponding to the vertices of its boundary.
We use an inductive argument on the number of vertices of $\partial D_i$.

Take any vertex $p$ of $D_i$ and consider the set  $\mathcal A$
of end-points of maximal geodesic segments embedded
in $D_i$ starting at $p$. By maximality, $\mathcal A$ is a subset of
$\partial D_i$. We distinguish two cases. First we suppose that $\mathcal A$ contains
a vertex $q$ of $\partial D_i$ which is not adjacent to $p$. In this case, cutting $D_i$
along the maximal segment joining $p$ to $q$, we decompose $D_i$ into $2$ disks 
with less vertices. In the other case, any segment starting from $p$ and contained in
the interior of $D_i$ must meet a single edge of $D_i$. This means that there exists a geodesic
triangle in $D_i$ with a vertex at $p$ whose edges contains the edges of $\partial D_i$
adjacent to $p$. In this case there is a segment in the interior of $D_i$ joining the
vertices adjacent to $p$. Thus we can cut $D_i$ into a triangle $T'$ contained in $T$
and another disk $D'$ with less vertices.
\end{proof}

Note that the argument given in the previous proof for the existence of a triangulation
could be replaced by another argument, based on Voronoi diagrams, which is somewhat
simpler in the setting considered here. The reason why we favored the slightly
more involved argument used here is that we will repeat the same argument below
in the slightly different setting of surfaces with (convex) boundary. The type
of argument used here works directly for surfaces with boundary, while the
argument based on Voronoi diagrams is less directly applicable there. 

\begin{proof}[Proof of Lemma \ref{lm:holonomy}]

We define a {\it 3-disk} in $S$ as a disk in $S$ containing no marked
point in its interior and exactly three marked points in its
boundary. Those disks are considered up to homotopy of $S$ fixing the
marked points $x_i$. Let $D$ be such a disk, containing in its
boundary the marked points $x_i,x_j,x_k$. Considering the restriction
to $D$ of the developing map of the regular part of $(S,\mu_0)$ we associate to $x_i,x_j$
and $x_k$ a triple $(x'_i,x'_j,x'_k)$ of points in $\HH^2$, defined up
to global isometry of $\HH^2$, as well as a disk $D'$ containing
$x'_i,x'_j$ and $x'_k$ in its boundary ($D'$ is defined up to homotopy
in $\HH^2$ fixing $x'_i, x'_j$ and $x'_k$).

Notice that $D'$ and $(x'_i, x'_j,x'_k)$
are uniquely determined by the holonomy of $\mu_0$ only, because the
holonomy determines the 
cone angles at $x'_i$, $x'_j$, $x'_k$ (through the holonomies 
of loops around these points) and the 
distance between $x'_i$ and $x'_j$ (through
the trace of the holonomy of the boundary of a small neighborhood
of the segment of $\partial D$ between $x_i$ and $x_j$) and
similarly for $x'_j$ and $x'_k$ and for $x'_k$ and $x'_i$.

We say that the 4-tuple $(D',x'_i, x'_j, x'_k)$ is {\it realizable} if
$D'$ can be deformed to a triangle (with geodesic boundary) with
vertices $x'_i,x'_j$ and $x'_k$, without displacing $x'_i, x'_j$ and
$x'_k$. Clearly if $(x_i, x_j,x_k)$ are the vertices of a triangle $D$
of a geodesic triangulation $T$ of $(S,\mu_0)$, then
$(D',x'_i,x'_j,x'_k)$ is realizable. But conversely, if, for any face
$D$ of a triangulation $T$ with vertices $x_i,x_j$ and $x_k$, $(D',
x'_i,x'_j,x'_k)$ is realizable, then considering the developing map
of the metric shows that $T$ can be realized as a geodesic
triangulation.

Since the condition for a 3-disk to be realizable depends only on the
holonomy, the geodesic triangulation $T_0$ of $(S,\mu_0)$ constructed
in Sublemma \ref{ssl:triang}
also corresponds to a geodesic triangulation of $(S,\mu_1)$.
Moreover the length of the edges is the same for the two metrics,
because we have seen that the length of an edge is determined by
the holonomy. So $\mu_1$ is isotopic to $\mu_0$.
\end{proof}

\begin{proof}[Proof of Lemma \ref{lm:surface}]
For the first point consider another transverse  vector field $V'$
on $S$, and let $\mu'_l,\mu'_r$ be the hyperbolic metrics defined
on $S$ by the choice of $V'$ as a transverse vector field.
Let $\gamma$ be a closed curve on the complement of the singular
points in $S$. The holonomy of $\mu'_l$ (resp. $\mu'_r$) on
$\gamma$ is equal to the holonomy
of $D^l$ (resp. $D^r$) acting on the hyperbolic plane, identified
with the space of oriented time-like unit vectors at a point of
$S$. So $\mu_l$ and $\mu'_l$ (resp. $\mu_r$ and $\mu'_r$) have the
same holonomy, so that they are isotopic by Lemma \ref{lm:holonomy}.

The same argument can be used to prove the second part of the lemma.
Let $\gamma_1$ be a closed curve on $S_1$ which does not intersect the
singular set of $M$, and let $\gamma_2$ be a closed curve on $S_2$
which is isotopic to $\gamma_1$ in the regular set of $M$. The holonomy
of $M$ on $\gamma_1$, $\hol(\gamma_1)$, is equal to the holonomy of
$M$ on $\gamma_2$, $\hol(\gamma_2)$. But $\hol=(\hol_l,\hol_r)$ by
Lemma \ref{lm:rho}, and $\hol_l,\hol_r$ are the holonomy representations
of the left and right hyperbolic metrics on $S_1$ and on $S_2$ by
Lemma \ref{lm:rho}. Therefore, $(S_1,\mu_l)$ has the same holonomy
of $(S_2,\mu_l)$, and $(S_1,\mu_r)$ has the same holonomy as
$(S_2,\mu_r)$. The result therefore follows by Lemma
\ref{lm:holonomy}.
\end{proof}

Note that a weaker version of this proposition is proved
as \cite[Lemma 5.16]{minsurf} by a different argument.
The notations $\mu_l,\mu_r$ used here are the same as in
\cite{cone}, while the same metrics
appeared in \cite{minsurf} under the notations $I^*_\pm$. Those
metrics already appeared, although implicitly only, in Mess' paper
\cite{mess}. 
As we have mentioned in \fb{Section} \ref{sub:GH-AdS}, 
\jms{this} paper considers globally hyperbolic AdS manifolds,
which are the quotient of a maximal convex subset $\Omega$ of $AdS_3$ by
a surface group $\Gamma$ acting by isometries on $\Omega$. The identification
of \fb{$SO_0(2,2)$} with $PSL(2,\R)\times PSL(2,\R)$ then determines
two representations of $\Gamma$ in $PSL(2,\R)$ with maximal Euler
number, so that they define hyperbolic metrics. It is proved in
\cite{minsurf} that those two hyperbolic metrics correspond precisely
to the left and right metrics considered here.

\begin{remark}
In general the isometries $\phi_l:(S,\mu_l)\rightarrow(S',\mu'_l)$ and
$\phi_r:(S,\mu_r)\rightarrow (S',\mu'_r)$ are different.
This implies that the pair $(\mu_l,\mu_r)$ is not uniquely determined
up to isotopy (acting on both the factors).
\end{remark}


In the remaining part of this section we will show that
any transverse unit vector field $V$ on $S$ can be extended to a unit vector
field on a neighborhood  $\Omega$ of $S$ such that
\begin{itemize}
\item it is tangent to the particle,
\item it is transverse on any  space-like surface $S'$ contained in $\Omega$,
\item the map $\phi:S\rightarrow S'$ obtained by following the orbits of $V$
is an isometry for both $\mu_l$ and $\mu_r$.
\end{itemize}

In fact there exists $\epsilon>0$ such that the map
\[
  F:S\times(-\epsilon,\epsilon)\ni (p,t)\mapsto \exp_p(tV(p))\in AdS_3\,.
\]
is well-defined and it is a diffeomorphism onto some neighborhood
$\Omega$ of $S$ in $AdS_3$.
Notice that if $p$ is the intersection point of $S$ with some particle,
$F(p,t)$ lies on the particle for every $t$. Moreover, by the assumption on
$V$ it is easy to check that $F$ is a diffeomorphism around the particles.

Clearly the map $F$ induces on $\Omega$ a foliation by time-like geodesics
parallel to $V$, so we can consider the induced map
\[
\hat\delta:\Omega\rightarrow G(M)
\]
and the bilinear forms $\hat\mu_l=\hat\delta^*(m_l)$,
$\hat\mu_r=\hat\delta^*(m_r)$.

Since $\hat \delta(F(p,t))=\delta(p)$, where $\delta:S\rightarrow
G(M)$ is the map defined in Definition \ref{df:610}, we have that
$F^*(\hat\mu_\bullet)=\pi_S^*(\mu_\bullet)$ where
$\pi_S:S\times(-\epsilon,\epsilon)\rightarrow S$ is the projection.

In particular, $\hat\mu_\bullet$ is non-degenerate on every plane that
is not tangent to $\hat V$.  If $S'$ is any space-like surface, $\hat
V$ is transverse to it and we have that the induced metrics
$\mu'_\bullet=\hat\mu_\bullet|_{S'}$. Finally, the map $S'\rightarrow
S$ sending $q\in S'$ to the intersection point of the geodesic leaf
through $q$ with $S$ turns out to be an isometry for both $\mu'_l$,
$\mu_l$ and $\mu'_r$ and $\mu_r$.


\subsection{Left and right metrics on the future of a collision point}

We consider now the case where $S$ is a space-like surface with a transverse
vector field  in a AdS  spacetime  which contains
a unique collision point $p$. Without loss of generality we suppose
 $p$ in the past of $S$.
Clearly $I^+(p)\cap S$ is a disk $D$ with $k$ singular  points where $k$ is the number
of particles starting from $p$.

\begin{defi}
A connected open  subset $U$ of a hyperbolic surface $S$ is
convex if any path  $c$ contained in $U$ can be deformed
to a geodesic path in $U$ keeping the endpoints of $c$ fixed.
\end{defi}

The goal of this section is to prove the following proposition

\begin{prop}\label{pr:isomdsk}
There are convex disks $D_l, D_r$ isotopic to
$D$ in $S$ such that $(D_l, \mu_l)$ is isometric to $(D_r,\mu_r)$.
\end{prop}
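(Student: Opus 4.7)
The plan is to exploit the admissibility condition at the collision point $p$: the holonomy of $\pi_1(\link_{p,reg})$ fixes a point $x_p \in AdS_3$ by Lemma \ref{lm:holadm}, and this rigidity will force the left and right hyperbolic structures on $D$ to coincide up to a single conjugation in $SO_0(2,1)$.

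First, I combine Lemmas \ref{lm:holadm}, \ref{lm:stab} and \ref{lm:rho}: the stabilizer of $x_p$ in $SO_0(2,2)$ corresponds, via the isomorphism $I$ of Lemma \ref{lm:rho}, to a subgroup of $SO_0(2,1)\times SO_0(2,1)$ that is conjugate to the diagonal. Hence there exists $g\in SO_0(2,1)$ with
\[
\hol_r(\gamma) \;=\; g\,\hol_l(\gamma)\,g^{-1}\qquad \text{for every } \gamma\in\pi_1(\link_{p,reg})~.
\]
The inclusion $D_{reg}\hookrightarrow I^+(p)_{reg}$, together with the retraction of $I^+(p)_{reg}$ onto the future hyperbolic component of $\link_{p,reg}$, sends the meridian generators of $\pi_1(D_{reg})$ to meridians of $\pi_1(\link_{p,reg})$, so the same conjugacy relation holds on the image of $\pi_1(D_{reg})$, i.e.\ for the left/right holonomies of $\mu_l$ and $\mu_r$ on $D$.

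Next, by the preceding proposition both $(D,\mu_l)$ and $(D,\mu_r)$ are hyperbolic disks with cone singularities at the $k$ particle-intersection points, with the same cone angles. Moreover the holonomy of the boundary loop $\partial D$, viewed as a loop in $D_{reg}$, is an elliptic element of $SO_0(2,1)$ fixing the image $x_0\in\HH^2$ of $x_p$. Cutting $D$ along disjoint geodesic arcs from a common regular base point to each cone point unfolds $(D,\mu_\bullet)$ into a sector in $\HH^2$ based at $x_0$, whose combinatorial data (positions of the cone points, identifications along the cuts) are entirely determined by the meridian holonomies. I would then construct the convex disks by taking, inside each developed sector, the closed metric ball $B(x_0,R)\subset\HH^2$ for $R$ large enough to contain every developed cone point: this ball is a convex hyperbolic disk carrying $k$ cone singularities, and pulling it back yields convex subdisks $D_l,D_r\subset D$, both isotopic to $D$. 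After conjugating the developing map of $(D_r,\mu_r)$ by $g^{-1}$, the developed images for $\mu_l$ and $\mu_r$ coincide point for point --- same centre $x_0$, same cone points, same radius $R$ --- and the induced identification descends to the required isometry $(D_l,\mu_l)\to(D_r,\mu_r)$.

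The main obstacle is making the ``developed sector'' picture rigorous: one must verify that $x_p$ really appears inside $(D,\mu_\bullet)$ as a common base point from which all meridians emanate, or equivalently that $(D,\mu_\bullet)$ admits a geodesic triangulation with one vertex at a preimage of $x_0$ and the remaining vertices at the cone singularities, in the spirit of Sublemma \ref{ssl:trian}. Once this structural statement is in place, the matching via the conjugating element $g$ is essentially automatic.
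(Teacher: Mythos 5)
You correctly isolate the key algebraic input: by Lemmas \ref{lm:holadm}, \ref{lm:stab} and \ref{lm:rho}, the left and right holonomies of $\pi_1(\link_{p,reg})$, hence of $\pi_1(D_{reg})$, differ by a single conjugation $g\in SO_0(2,1)$. This is exactly what the paper relies on when it says the shape of the triangulation ``depends only on the holonomy of the disk''. The gap is in how you turn this holonomy identity into an actual pair of embedded convex disks. Your construction --- unfold $(D,\mu_\bullet)$ into a sector based at the fixed point $x_0$ and pull back a metric ball $B(x_0,R)$ with $R$ large enough to contain all developed cone points --- does not go through. First, as you yourself flag, there is no reason $x_0$ appears in $(D,\mu_\bullet)$ as a base point from which the disk develops onto a sector; the developing map of $\tilde D_{reg}$ is merely an immersion of a cut-open polygon, and only the holonomy (not the shape of $D$ or of its boundary) is controlled. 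Second, and more fatally, even granting an unfolding, a ball of large radius around $x_0$ need not lie in the developed image of $D$, and its preimage in the closed cone surface $S$ need not be an embedded disk at all (large metric balls in a closed surface overlap themselves), so ``pulling it back yields convex subdisks $D_l,D_r\subset D$'' is not justified. What the holonomy determines is not the geometry of $D$ or of an arbitrary large ball, but only the geometry of the \emph{minimal convex core} of $D$.

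The paper supplies precisely the missing construction: take a sequence of disks $D_n$ isotopic to $D$ whose $\mu_l$-boundary length tends to the infimum; the boundaries converge to a $\mu_l$-geodesic graph $\Gamma$ with vertices at cone points (the hypothesis that all cone angles lie in $(0,2\pi)$ forces the exterior angles to be at least $\pi$, so $\Gamma$ is either a point, a segment, or a circle bounding a convex disk $D'_l$). In the nondegenerate case one triangulates $D'_l$ geodesically with vertices at the cone points as in Sublemma \ref{ssl:trian}; since the realizability and the edge lengths of each triangle are read off from the holonomy, which after conjugating by $g$ is the same for $\mu_r$, the triangulation transfers to a $\mu_r$-geodesic isometric copy, giving the isometric embedding $(D'_l,\mu_l)\rightarrow(S,\mu_r)$ and, after thickening, the disks $D_l,D_r$. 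You would also need to treat separately the degenerate cases of one or two cone points, where the convex core collapses to a point or a segment. To repair your argument you should replace the large-ball step by this boundary-length minimization and the triangulation-transfer argument of Lemma \ref{lm:holonomy}; the conjugation by $g$ then plays exactly the role you intended.
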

(Let us stress that here the isotopies are supposed not to displace
the cone points.)

\fb{
First we show that the statement is true for the holonomies.

\begin{lemma}\label{lm:isomdsk}
The holonomies
of $\mu_l$ and $\mu_r$ restricted to $\pi_1(D_{reg})$
are conjugated.
\end{lemma}
\begin{proof}
We use the fact that the holonomies of $\mu_l$ and $\mu_r$
are the left and right factors of the holonomy of the AdS structure,
as stated in Lemma \ref{lm:rho}.

So we have to show that the restriction of the holonomy of 
$h$ to $\pi_1(D_{reg})$ is conjugated to a diagonal representation
into $SO_0(2,2)=SO_0(2,1)\times SO_0(2,1)$.

If $\Sigma$ is the link of the collision point, the inclusion
$\pi_1(D_{\reg})\rightarrow \pi_1(M_{\reg})$ can be factored as the composition
$\pi_1(D_{reg})\to\pi_1(\Sigma_{reg})\to\pi_1(M_{reg})$. So it is sufficient to prove
that the restriction o $h$ to $\pi_1(\Sigma_{reg})$ is conjugated to a  diagonal representation.

On the other hand, this clearly follows since the holonomy $h$
restricted to $\pi_1(\Sigma_{reg})$ fixes a point (see Lemma \ref{lm:stab}).
\end{proof}

Notice that Lemma \ref{lm:isomdsk} is not sufficient to conclude
the proof of Proposition \ref{pr:isomdsk}, since we have to point out 
concrete disks $D_l$ and $D_r$ such that $(D_l,\mu_l)$ and $(D_r,\mu_l)$ are
isometric. To that aim we will use the same triangulation argument as in Lemma \ref{lm:holonomy}.
}

  The main difference is that in
this case $D_l$ and $D_r$ have boundary, so we need to select them
carefully: the key point in the proof of Lemma \ref{lm:holonomy} is
that pairs of points are joint by a minimizing geodesic. So in order
to apply the same argument we need the convexity of $D_l$ and $D_r$.

\begin{proof}
Take a sequence $D_n$ of disks isotopic to $D$ such that the $\mu_l$-length of
$\partial D_n$ converges to the infimum
of the  $\mu_l$-lengths of boundary curves of disks isotopic to $D$.

$\partial D_n$ converges to a $\mu_l$-geodesic graph $\Gamma$ with
vertices at cone points of $D$.  In fact, locally around each point
$x$ of $\Gamma$ we see two regions $\Omega_1$ and $\Omega_2$ in
$S\setminus\Gamma$. By the minimizing property we easily see that if
$\Omega_i$ is in the limit of $S\setminus D_n$, then the angle
contained in $\Omega_i$ with vertex at $x$ is bigger than $\pi$. Since
singularities are supposed to have angles in $(0,2\pi)$ we easily see
that one of the following possibilities occurs:
\begin{itemize}
\item $D$ contains only one cone point, and $\Gamma$ coincides with it.
\item $D$ contains exactly two cone points and $\Gamma$ is a segment
  with vertices at cone points.
\item $D$ contains more than $2$ cone points and $\Gamma$ is a circle
  bounding a convex disk $D'_l$ such that the regular neighborhoods of
  $D'_l$ are isotopic to $D$.
\end{itemize}

\fb{In the first case it is sufficient to define $D_l$, $D_r$ to be 
disks of radius $\epsilon$ around the cone point for $\mu_l$ and $\mu_r$ 
respectively.

In the second case, notice that the $\mu_l$-length of $\Gamma$, say $a$,
 is determined by the holonomy of $\mu_l$ on $D_{reg}$.
Since the holonomies of $\mu_l$ and $\mu_r$ on $D_{reg}$ coincide
$\Gamma$ can be deformed to an arc $\Gamma'$ which is $\mu_r$ geodesic and such that
the $\mu_r$-length of $\Gamma'$ is also $a$. Then a $\mu_l$-regular neighborhood of
$\Gamma$ and a $\mu_r$-regular neighborhood of $\Gamma'$ are isometric.
}

In the third case, we can construct a $\mu_l$-geodesic triangulation of $D'_l$ as
in Proposition \ref{lm:holonomy}. The shape of this triangulation just depend on
the holonomy of the disk, and this implies that each triangle of this
triangulation can be deformed to a $\mu_r$-triangle.

This implies that there exists an isometric embedding
$(D'_l,\mu_l)\rightarrow (S,\mu_r)$ which is isotopic to the inclusion
$D \rightarrow S$.  Thickening a bit $D'_l$ we get a convex disk $D_l$
isotopic to $D$ such that $(D'_l,\mu_l)$ admits an isometric embedding
(isotopic to the identity) into $(S, \mu_r)$.
\end{proof}

\begin{remark}
In general the intersection of two convex disks is not connected.
On the other hand, the proof of Proposition \ref{pr:isomdsk} shows
that any convex disk isotopic to $D$ must contain $D'_l$. This implies
that if $D_1, D_2$ are convex disks isotopic to $D$, then  $D_1\cap D_2$
contains a convex disk isotopic to $D$.
\end{remark}

\subsubsection*{Example}

If only two particles, $p_1$ and $p_2$, collide, the corresponding
cone points are at the same distance in the left and right hyperbolic
metric of $\Omega$; more precisely, there are two segments of the same
length, one in the left and one in the right hyperbolic metric of
$\Omega$, joining the cone points corresponding to $p_1$ and to $p_2$.
Moreover the length of those segments is equal to the ``angle''
between $p_1$ and $p_2$ at $c$, i.e., to the distance between the
corresponding points in the link of $c$.

\section{Surgeries at collisions} \label{sc:3}
\label{sc:surgeries}

We now wish to understand how the left and right hyperbolic metrics
change when a collision occurs.

\subsection{Good spacial slices}

The first step in understanding AdS manifolds with colliding particles
is to define more easily understandable pieces.

\begin{defi}
Let $M$ be an AdS manifold with colliding particles. A {\bf spacial slice}
in $M$ is a subset $\Omega$ such that
\begin{itemize}
\item there exists a closed surfaces $S$
with marked points $x_1,\cdots, x_n$ and a homeomorphism
$\phi:S\times [0,1]\rightarrow \Omega$,
\item $\phi$ sends $\{ x_1,\cdots,x_n\}\times [0,1]$ to the singular set of
$\Omega$,
\item $\phi(S\times \{ 0\})$ and $\phi(S\times \{ 1\})$ are space-like
surfaces.
\end{itemize}
$\Omega$ is a {\bf good} spacial slice if in addition
\begin{itemize}
\item it contains a space-like surface with a transverse vector field.
\end{itemize}
\end{defi}

Hence, we have constructed at the end of Section \ref{sub:changing} a good spacial slice in the neighborhood
of any Cauchy surface equipped with a transverse vector field. Observe that spacial slices do not contain interactions.

It is useful to note that Lemma \ref{lm:surface}, along with its
proof, applies also to surfaces with boundary, with a transverse
vector field, embedded in a good spacial slice. Such surfaces
determine the holonomy of the restriction of the left and right
metrics to surfaces with boundary, as explained in the following
remark.  The proof is a direct consequence of the arguments used in
Section \ref{sc:leftright}, and more specifically in the proof of Lemma
\ref{lm:surface}.

\begin{remark} \label{rk:surface}
Let $\Omega$ be a good spacial slice, let $D\subset \Omega$ be a space-like
surface with boundary, and let $u'$ be a transverse vector field on $D$.
Then $u'$ determines a left and a right hyperbolic metric, $\mu'_l, \mu'_r$
on $D$, as for closed surfaces above. Moreover for any closed curve
$\gamma$ contained in $D$, the holonomies of $\mu'_l$ and $\mu'_r$
on $\gamma$ are equal respectively to the left and right parts of the
holonomy of $\gamma$ in $M$.
\end{remark}

\subsection{Surgeries on the left and right metrics} \label{ssc:surgeries}

In this section we consider in details how the left and right metrics
change when a collision occurs. The first step is to define some simple
notions of surgery on hyperbolic surfaces with cone singularities, and on
pairs of such surfaces. We will later prove that those surgeries are exactly
those that can happen on the left and right metrics of spacial slices of
an AdS manifold with particles when a collision occurs.

\subsubsection{Surgery on hyperbolic surfaces}

The basic building block of the surgeries considered here is a simple
operation where one replaces a disk, in a hyperbolic surface with cone
singularities, by another disk with only one singularity.

\begin{defi}
Let $S_-$ and $S_0$ be two hyperbolic cone-surfaces, and
let $D_-\subset S_-$ be homeomorphic to an open disk. We say that $S_0$
is obtained from $S_-$ by {\bf collapsing} $D_-$ if
\begin{itemize}
\item there exists an isometric embedding $i:S_-\setminus D_-\rightarrow S_0$,
\item $S_0 \setminus i(S_-\setminus D_- )$ is homeomorphic to an open disk and contains
exactly one cone singularity $s_0$, of angle $\theta\in (0,2\pi)$.
\end{itemize}
We call $s_0$ the collapsed singularity of $S_0$.
\end{defi}

Note that the geometry of the disk $S_0 \setminus i(S_-\setminus D_- )$ depends only
on the geometry of $D_-$. In other terms, there is a
hyperbolic disk $D_{0,-}$ with exactly one cone singularity, depending only on
$D_-$, and an isometric embedding $j_-:D_{0,-}\rightarrow S_0$ such that
$S_0\setminus j(D_{0,-})=S_-\setminus D_-$.

We now introduce the surgery on {\it pairs} of hyperbolic
cone-surfaces, which corresponds --- as it will be seen below --- to what
occurs to the left and right hyperbolic metrics of an AdS manifold
with particles when a particle collision occurs. The basic idea is
that a disk surgery is done on both $S_-^l$ and $S_-^r$, collapsing
the {\it same} disk $D_-$ (up to isometry of course) and yielding the
same disk $D_+$ (again up to isometry). However an additional
condition is necessary, stating that the ``relative position'' of
$D_-$ and $D_+$ is the same on the left and on the right side.

We consider a surface $S$ with a couple of hyperbolic cone metrics
$\mu_l, \mu_r$ and we suppose that (up to isotopy) they coincide in a
neighborhood of a singular convex disk $D$. Moreover we will
suppose that the holonomy of $\partial D$ (for both $\mu_l$ and
$\mu_r$) is elliptic of angle $\theta\in (0,2\pi)$ and that a collar neighborhood of $\partial
D$ admits an isometric embedding (for both $\mu_l$ and $\mu_r$)
$i:\partial D\rightarrow H_\theta$, where $H_\theta$ is the model of
the singularity of angle $\theta$.  The image of those embeddings bound
a disk $D_0$ in $H_\theta$ containing the singular point.  We consider
now the surface $S_0$ obtained by cutting from $S$ the disk $D$ and
pasting the disk $D_0$ using $i$ as glueing map. Notice that the
metrics $\mu_l$ and $\mu_r$ glue to the metric $\mu$ of $D_0$,
yielding two singular metrics which coincide on a disk $D$.

\begin{defi}\label{df:dcollaps}
We say that the triple $(S_0, \mu_l^0, \mu_r^0)$ is obtained by $(S,
\mu_l, \mu_r)$ by collapsing $D_-$.
\end{defi}

Changing $\mu_l, \mu_r$ by two different isotopies (but requiring that
they coincide on some disk $D'$ isotopic to $D$), we get another
collapsed surface $S_0', {\mu'_l}^0, {\mu'_r}^0$.  Clearly there are
natural isometries
\[
  \phi_l:(S_0,\mu^0_l)\rightarrow (S_0', {\mu'_l}^0)\qquad
  \phi_r:(S_0, \mu^0_r)\rightarrow (S_0', {\mu'_r}^0)~.
\]

Though those isometries are in general different, the following lemma
establishes that when $D$ contains at least two cone points, $\phi_l$
and $\phi_r$ coincide in a neighborhood of the collapsed singularity.

\begin{lemma}\label{lm:rig}
If $D$ contains at least two singular points, then $\phi_l$ and $\phi_r$
coincide in a neighborhood of the collapsed singularity of $S_0$.
\end{lemma}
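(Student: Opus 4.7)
The plan is to argue locally near the collapsed singularities. By construction of the collapse, in a neighborhood $V$ of $s_0\in D_0\subset S_0$ we have $\mu_l^0=\mu_r^0$ (both equal the model cone metric of $H_\theta$), and similarly ${\mu'_l}^0={\mu'_r}^0$ on a neighborhood $V'$ of $s_0'$. Therefore $\phi_l|_V$ and $\phi_r|_V$ are both isometries $V\to V'$ sending $s_0$ to $s_0'$. Since any local isometry of $H_\theta$ fixing the cone point is a rotation, $\phi_r\circ \phi_l^{-1}|_{V'}$ is a rotation $R_\alpha$ about $s_0'$ of some angle $\alpha\in\mathbb R/\theta\mathbb Z$, and it suffices to show $\alpha=0$.

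To pin down $\alpha$, I would use the hypothesis that $D$ contains at least two cone points $p_1,p_2$. In the common metric $\mu_l|_D=\mu_r|_D$, applying Sublemma \ref{ssl:trian} to $(D,\mu_l|_D)$ gives a geodesic triangulation of $D$ with vertices including $p_1,p_2$. From it I extract, for each $p_i$, a distinguished direction at a point of $\partial D$ (for instance, the tangent at $\partial D$ of a geodesic segment from $p_i$ meeting $\partial D$ transversely). Using the isometric collar embedding $i:\partial D\to H_\theta$ and then radial geodesics in $H_\theta$, these transport to a pair of directions at $s_0$, defining a canonical framing $\mathcal F(D)$ attached to the data of the collapse. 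Exactly the same construction applied to $(D',\mu'_l|_{D'})=(D',\mu'_r|_{D'})$ produces a framing $\mathcal F(D')$ at $s_0'$.

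The key claim is that both $\phi_l$ and $\phi_r$ carry $\mathcal F(D)$ to $\mathcal F(D')$. This is because outside a collar of $s_0$, the restriction $\phi_l|_{S_0\setminus V}$ (resp.\ $\phi_r|_{S_0\setminus V}$) is determined by an isometry $(D,\mu_l|_D)\to(D',\mu'_l|_{D'})$ (resp.\ $(D,\mu_r|_D)\to(D',\mu'_r|_{D'})$) sending cone points to cone points and preserving the boundary parameterization induced by $i$. Since $D$ has at least two cone points, a hyperbolic disk with this structure has trivial isometry group relative to such data (this is a standard consequence of the rigidity statement of Lemma \ref{lm:holonomy}), so these two isometries coincide on $D$. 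Consequently the framings at the boundary agree, and are transported to the same framing at $s_0'$ under both $\phi_l$ and $\phi_r$.

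Since the framing $\mathcal F(D')$ comprises two labelled directions at $s_0'$ (distinguished because $p_1\ne p_2$ are tracked separately through the construction), the rotation $R_\alpha$ must fix each of them, forcing $\alpha=0$. The main obstacle to making this rigorous is verifying that the construction of the framing is genuinely independent of the isotopy class used to match $\mu_l$ with $\mu_r$ on $D$; this requires the rigidity of hyperbolic cone-disks with two or more marked cone points, which would fail for $D$ with only one cone point, where nontrivial rotational ambiguity of $D_0$ cannot be ruled out and explains why the hypothesis is essential.
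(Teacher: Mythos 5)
Your opening reduction is fine and matches the paper's own remark: near the collapsed point all the metrics coincide with the model, so $\phi_r\circ\phi_l^{-1}$ is a rotation $R_\alpha$ about $s_0'$ and the whole content of the lemma is $\alpha=0$. You have also correctly located where the hypothesis of two cone points enters: the rigidity of a convex hyperbolic disk with at least two cone points (this is the paper's Sublemma \ref{slm:rig}, proved by fixing the geodesic arc between the two cone points --- not Lemma \ref{lm:holonomy}, which concerns closed surfaces; note also that convexity of the disk is essential for that sublemma, since the minimizing arc must stay inside it).

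The genuine gap is the sentence asserting that $\phi_l|_{S_0\setminus V}$ ``is determined by an isometry $(D,\mu_l|_D)\to(D',\mu'_l|_{D'})$ \ldots preserving the boundary parameterization induced by $i$.'' This is precisely the hard point, and it is not justified. The disks $D$ and $D'$ have been excised from $S_0$ and $S_0'$, so $\phi_l$ carries no a priori information about them: $\phi_l$ need not send $D_0$ onto $D_0'$, and even if you extend $\phi_l|_{S\setminus D}$ to an isometry $(S,\mu_l)\to(S,\mu'_l)$ isotopic to the identity, that extension sends $D$ to \emph{some} convex disk isotopic to $D'$, not to $D'$ itself; the possible rotational offset between how $D$ sits in $H_\theta$ via $i$ and how $D'$ sits via $i'$ is exactly the quantity $\alpha$ you are trying to kill, so assuming compatibility with the boundary parameterizations is close to assuming the conclusion. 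The paper closes this gap with machinery your argument omits: it reduces to the normalized case $\mu'_l=\mu_l$, $\mu'_r=u^*\mu_r$ with $u$ isotopic to the identity, produces a disk $\Delta\subset D\cap u(D')$ isotopic to $D$ and convex for both metrics (using the minimal convex disk of Proposition \ref{pr:isomdsk} and the fact that the intersection of two convex disks isotopic to $D$ contains such a disk), applies Sublemma \ref{slm:rig} to get $u|_\Delta=\mathrm{Id}$, and then compares the three collapses along $D$, $D'$ and $\Delta$. Finally, the framing construction is a detour: to show both maps transport your framing the same way you would already need $\phi_l=\phi_r$ on $\partial D_0$, and once you have that, unique continuation of isometries into $D_0$ (where all metrics agree) gives the conclusion with no framing at all. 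The work is entirely in establishing that boundary agreement, which your proposal does not do.
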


Notice that if $D$ contains only a singular point, then the statement
is false. In fact, the proof of Lemma \ref{lm:rig} is based on
the following simple fact which clearly depends on the fact that $D$ contains
at least two cone points.

\begin{sublemma}\label{slm:rig}
If $D$ is a convex disk in a hyperbolic surface $S$ with at least two cone
singularities and if $\sigma: D\rightarrow S$ is an isometric immersion
isotopic to the identity, then $\sigma$ is the identity.
\end{sublemma}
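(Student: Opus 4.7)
The plan is to first show that $\sigma$ fixes each cone singularity of $D$, then exhibit a geodesic between two cone points that $\sigma$ must fix pointwise, and conclude by local rigidity of hyperbolic isometries. As $\sigma$ is an isometric immersion, it sends each cone singularity of $D$ to a cone singularity of $S$ with the same angle. The isotopy from the inclusion to $\sigma$ preserves the set of cone singularities, so for each cone point $p$ of $D$ the path $t \mapsto \sigma_t(p)$ takes values in the discrete set of cone singularities of $S$; by continuity it is constant, and hence $\sigma(p) = p$.

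Next, let $p_1, p_2$ be distinct cone singularities of $D$ achieving the minimum of pairwise distances among such pairs, and let $\gamma \colon [0,L] \to D$ be a minimizing geodesic from $p_1$ to $p_2$; it exists by convexity of $D$ and, by the choice of $p_1, p_2$, does not meet any other cone singularity. The composition $\sigma \circ \gamma$ is still a geodesic from $p_1$ to $p_2$ in $S$, homotopic to $\gamma$ relative to its endpoints through the isotopy, and of length $L$. Since a hyperbolic cone surface with cone angles in $(0, 2\pi)$ is locally CAT$(-1)$, the minimizing geodesic in a given homotopy class rel endpoints is unique, hence $\sigma \circ \gamma = \gamma$ as a set. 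Comparing arc-length parameterizations starting at $p_1$ then yields $\sigma \circ \gamma(t) = \gamma(t)$ for every $t$.

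Finally, at any regular interior point $p = \gamma(t_0)$ of $\gamma$, the differential $d\sigma_p$ is an orientation-preserving linear isometry of $T_p D$ (orientation preservation is inherited from the isotopy to the inclusion) fixing the tangent vector $\dot\gamma(t_0)$, so it is the identity. An isometric immersion between hyperbolic surfaces whose $1$-jet at a point agrees with the inclusion must coincide with it on a neighborhood, and standard analytic continuation of local isometries on the regular part of $D$ (via the developing map) extends this agreement to all of $D$.

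The main obstacle in this outline is the uniqueness of the minimizing geodesic in a given homotopy class, which is what forces $\sigma \circ \gamma = \gamma$; this rests on the CAT$(-1)$ property of hyperbolic cone surfaces with cone angles less than $2\pi$, and on the fact that our chosen $\gamma$ avoids all other cone singularities, so that the comparison takes place in a well-behaved part of the surface. The remaining steps (fixing the cone points by continuity and the rigidity of a local hyperbolic isometry agreeing with the identity to first order at a regular point) are essentially formal.
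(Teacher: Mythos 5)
Your overall strategy --- fix the cone points, force $\sigma$ to fix a geodesic arc joining two of them, then conclude by first-order rigidity of local isometries on the regular part --- is exactly the route the paper takes; its own proof is a two-line version of the same outline. Your first step (fixing the cone points) and your last step (an orientation-preserving isometric immersion whose $1$-jet at a regular point agrees with the inclusion coincides with it, by continuation along the connected regular part of $D$) are fine. The problem is the middle step.

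The justification you give for $\sigma\circ\gamma=\gamma$ rests on the claim that a hyperbolic surface with cone angles in $(0,2\pi)$ is locally CAT$(-1)$, and hence that the minimizing geodesic in a homotopy class rel endpoints is unique. This is false: a cone point of angle $\theta<2\pi$ carries a concentration of \emph{positive} curvature $2\pi-\theta$, and such surfaces are locally CAT$(-1)$ (indeed, locally uniquely geodesic) only where all cone angles are at least $2\pi$. Concretely, two points placed symmetrically on either side of a cone point of small angle are joined by two distinct minimizing geodesics of the same length which are homotopic rel endpoints; Gauss--Bonnet rules out a geodesic bigon only when its interior contains no cone point. Your remark that the chosen $\gamma$ avoids the other cone singularities does not repair this, because what has to be controlled is the region swept by the homotopy between $\gamma$ and $\sigma\circ\gamma$ (equivalently, the interiors of the bigons they bound), and that region may contain cone points even though neither curve does. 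So as written the equality $\sigma\circ\gamma=\gamma$ is not established. To be fair, the paper's own proof simply asserts that $\sigma$ ``fixes any geodesic arc'' joining $p_1$ to $p_2$, so you are trying to fill a genuine gap --- but the CAT$(-1)$ fill does not work. A correct argument must use more than the homotopy class: for instance a Gauss--Bonnet bigon analysis exploiting your minimal-distance choice of $p_1,p_2$ to exclude a third cone point from the bigon, or a direct argument that the rotation angle of $\sigma$ at the fixed cone point $p_1$ vanishes.
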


\begin{proof}[Proof of Sublemma \ref{slm:rig}]
Let $p_1$, $p_2$ cone points of $D$.
The map $\sigma$ fixes the two cone points, so it fixes any geodesic arc
which joins $p_1$ to $p_2$ and this easily implies that it fixes
every point.
\end{proof}

\begin{proof}[Proof of Lemma \ref{lm:rig}]
Assuming $D'\subset D$ and
$\mu_l=\mu'_l$, $\mu_r=\mu'_r$,
the statement is clearly true.

Similarly, the statement is true also
assuming there exists a diffeomorphism $u$ of $S$ isotopic to
the identity such that $\mu'_l=u^*(\mu_l)$, $\mu'_r=u^*(\mu_l)$ and
$D'=u^{-1}(D)$.

So it is sufficient to consider the case where
$\mu'_l=\mu_l$, $\mu'_r=u^*(\mu_r)$ for some diffeomorphism $u$
isotopic to the identity.

Notice that in this case the disk $u(D')$ is $\mu_r$-convex.
In particular there is a $\mu_r$-convex disk $\Delta$ contained in
$D\cap u(D')$ and isotopic to $D$. Notice that $\mu_r=\mu_l$  on $\Delta$,
so this disk is also $\mu_l$-convex.
The restriction of $u^{-1}$ to $\Delta$ is an isometric embedding of
$(\Delta,\mu_l=\mu_r)\rightarrow(D',\mu_l=\mu'_r)\subset(S,\mu_l)$.
By Sublemma \ref{slm:rig},$u|_{\Delta}=Id$.

Now, let $(\hat S_0, \hat \mu_l, \hat \mu_r)$ be the surface obtained by
collapsing $\Delta$ on $(S,\mu_l, \mu_r)$ and let $(\hat S'_0,\hat\mu_l, \hat\mu_r)$
be the surface obtained by collapsing $\Delta$ on $(S, \mu'_l, \mu'_r)$.
Notice that the isometries $\hat\phi_l, \hat\phi_r:\hat S'_0\rightarrow\hat S_0$
extend respectively the identity and $u$ on $S''=S\setminus\Delta$. So they coincide on
$\partial \Delta$ and this shows that they coincide on the disk
$\Delta_0''=\hat S_0\setminus S''$.

On the other hand, the isometries
$\psi_l, \psi_r:S_0\rightarrow \hat S_0$ coincide in a neighborhood
of the collapsed point (because $\Delta\subset D$) and similarly do
the isometries $\psi'_l,\psi'_r:S_0'\rightarrow\hat S_0'$.

The statement follows
since $\phi_l=  (\psi'_l)^{-1}\circ\hat\phi_l\circ\psi_l$
and $\phi_r=  (\psi'_r)^{-1}\circ\hat\phi_r\circ\psi_r$.
\end{proof}

\begin{defi}  \label{df:double}
Let $S_-$ and $S_+$ be two surfaces, let $\mu_l^-,\mu_r^-$ be hyperbolic
cone metrics on $S_-$  sharing the same singular locus $\sigma_-$
and let $\mu_l^+, \mu_r^+$ be hyperbolic cone metrics
on $S_+$ with singular locus $\sigma_+$.
We say that $(S_+,\mu_l^+, \mu_r^+)$ is obtained from
$(S_-, \mu_l^-, \mu_r^-)$ by a {\bf double surgery} if
up to changing those metrics in their isotopy classes,
the following conditions are satisfied:
\begin{enumerate}
\item There are embedded singular disks $D_-\subset S_-$ and $D_+\subset S_+$ such that
$\mu_l^+$ and $\mu_r^+$ coincide in a neighborhood of $D_+$ and
$\mu_l^-$ and $\mu_r^-$ coincide in a neighborhood of $D_-$.
\item The corresponding collapsed surfaces $(S_0, \mu^0_l, \mu^0_r)$ and
$(\hat S_0,\hat\mu^0_l, \hat\mu^0_r)$ are isometric, that is there are two
isometries
\[
   \phi_l:(S_0, \mu^0_l)\rightarrow (\hat S_0, \hat \mu_l^0)\qquad
  \phi_r:(S_0, \mu^0_r)\rightarrow (\hat S_0, \hat\mu_r^0)~.
\]
\item
$\phi_l$ and $\phi_r$ are homotopic and
coincide in a neighborhood of the collapsed points.
\end{enumerate}
The disks $D_-$ and $D_+$ are called the surgery disks, whereas the
homotopy class of $\phi_l$ and $\phi_r$ is called the identification map.
\end{defi}

\begin{remark}
Condition (3) in the above definition needs some explanation.
Notice that since $\mu^0_l$ coincides with $\mu^0_r$ in a neighborhood of the collapsed point
of $S_0$, and $\hat \mu^0_r$ coincides wih $\hat\mu^0_r$ in a neighborhood of the collapsed
point of $\hat S_0$ we have that in general $\phi_r^{-1}\phi_l$ is an isometry in a neighborhood
of the collapsed point, that is, there is a number $\theta$ such that $\phi_r^{-1}\phi_l$ is a 
rotation of angle $\theta$. We require that $\theta=0$.  In the simple case where only
one surgery is sufficient, this condition means that the \emph{same} surgery transforms
$\mu_l^-$ into $\mu_l^+$ and $\mu_r^-$ into $\mu_r^+$.

The following example shows a case where condition (3) is not satisfied (see Figure \ref{fg:1}).
Take a surface $(S,\mu)$ with two cone points such that the holonomy around the cone points is 
elliptic and there is a constant curvature circle $c$ which bounds a disk $D$ containing the cone points.
 Now take $S_-=S$, $\mu_l^-, \mu_r^-=\mu$, $S_+=S$ and $\mu_l^+=\mu$. Finally define
 $\mu_r^+$ by twisting the disk $D$ of angle $\theta_0$ (this is possible since $\partial D$
 has constant curvature). More formally, the metric $\mu_r^+$ is constructed as follows:
let $\tau:S\setminus\mathring D\rightarrow S\setminus \mathring D$ be the identity
outside a collar of $\partial D$ and a rotation of angle $\theta$ on $\partial D$. Then,
$\mu_r^+=\tau^*(\mu)$ on $S\setminus D$ and $\mu$ on $D$.

In this case, if $S_0$ is the surface obtained from $S_-$ 
by replacing $D$ by a disk
$D_{\theta_0}$ with only one cone point of angle $\theta_0$,  
and $\hat S_0$ is the surface obtained by replacing  $D$ by 
$D_{\theta_0}$, then
$\phi_l$ is the identity map, whereas $\phi_r$ is $\tau$ 
outside $S\setminus D$ and is a rotation
of angle $\theta$ on $D$.

Note however that if $D_+$ (or $D_-$) contains only one cone singularity, 
then condition (3) is always satisfied. 
\end{remark}

Heuristically, the fact that $(S_+,\mu^+_l, \mu^+_r)$ and $(S_-, \mu^-_l, \mu^-_r)$ are related by
a double surgery means that there is a surface $S_0, \mu_l, \mu_r$ and two singular disks $D_-$ and $D_+$ such that $S_+$ is obtained by replacing a disk in $S_0$ containing a cone point
by $D_+$ and $S_-$ is obtained by replacing another disk containing the same cone point by
$D_-$.
It is tempting to simplify this definition, and to replace directly $D_-$ by $D_+$
without going through the intermediate step of $S_0$ with only one singularity instead of
either $D_-$ or $D_+$. It appears however that it is not always possible
to do this direct surgery --- an example is described in Appendix \ref{ap:A} of a
situation where a double surgery as defined here cannot be replaced by a simple
surgery where one topological disk is replaced by another.
Theorem \ref{tm:main} shows that the relevant notion when considering 
collisions of particles in AdS spacetimes is that of double surgery,
rather than the simpler notion of simple surgery on both the left and
right metrics.

\begin{figure}
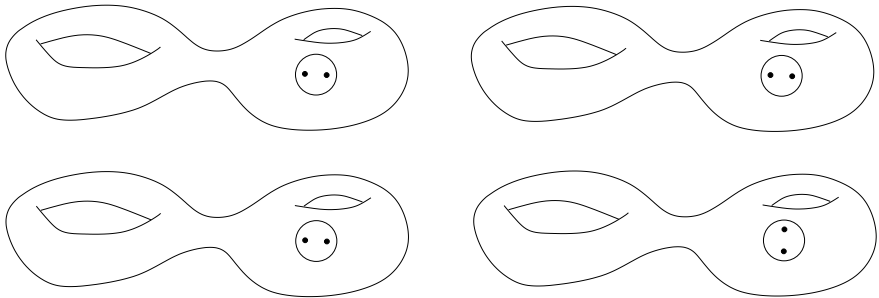

\begin{center}
\input dsurgery.pstex_t
\caption{An example of two pairs of surfaces which satisfy all
  conditions but (3) in the definition of double surgery.}
\label{fg:1}
\end{center}
\end{figure}


\subsubsection{Setting and main statement}

We now consider a more precise setting.
Let $\Omega$ be an AdS manifold with interacting particles,
containing exactly one collision point $p$, which we suppose has positive
mass. Suppose that $p$ is the future
endpoint of $n$ particles $s_1, \cdots, s_n$ and the past
endpoint of $m$ particles $s'_1, \cdots, s'_m$. Let $\theta_1,\cdots,
\theta_n$ be the cone singularities at the $s_i$, and let
$\theta'_1,\cdots, \theta'_m$ be the cone singularities at the $s'_j$.

Suppose that $\Omega$ is the union of two good space-like slices
$\Omega_-$ and $\Omega_+$, such that:
\begin{itemize}
\item they have disjoint interior,
\item the future boundary of $\Omega_-$ is equal to the past boundary
of $\Omega_+$,
\item $\Omega_-$ contains $s_1, \cdots, s_n$ and $\Omega_+$ contains $s'_1, \cdots, s'_m$.
\end{itemize}
We call $S_-$ a space-like surface in $\Omega_-$ with a transverse vector field $u_-$,
and $S_+$ a space-like surface in $\Omega_+$ with a transverse vector field
$u_+$. Let $\mu^\pm_l,\mu^\pm_r$ be the left and right hyperbolic metrics defined on
$S_\pm$ by $u'_\pm$.

\begin{prop} \label{pr:collision}
Under those conditions, the triple $(S_+, \mu^+_l, \mu^+_r)$ is obtained from the triple 
$(S_-, \mu^-_l,\mu^-_r)$ by a double surgery.

The surgery disks are in the isotopy class of $D_+=I^+(p)\cap S_+$ and $D_-=I^-(p)\cap S_-$, respectively,
whereas the identification maps are in the isotopy class of the map $S_+\setminus D_+\rightarrow S_-\setminus D_-$ 
obtained by following any timelike flow  sending $\partial D_+$ to $\partial D_-$.
\end{prop}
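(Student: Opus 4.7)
The plan is to verify the three conditions of Definition \ref{df:double} in turn, constructing the surgery disks, the collapsed surfaces, and the identification maps.

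For condition (1), I apply Proposition \ref{pr:isomdsk} to $(S_+, u_+)$ in the good spacial slice $\Omega_+$, where the single collision $p$ lies in the past of $S_+$, obtaining convex disks isotopic to $D_+ = I^+(p) \cap S_+$ together with an isometry between $(D_+, \mu^+_l)$ and $(D_+, \mu^+_r)$; the time-reversed statement applied in $\Omega_-$ gives the analogous result for $S_-$. Using these isometries to replace $\mu^\pm_r$ by isotopic metrics (via Lemma \ref{lm:surface}), I can arrange that $\mu^+_l$ and $\mu^+_r$ literally coincide on a neighborhood of a disk $D_+$ isotopic to $I^+(p) \cap S_+$, and likewise that $\mu^-_l$ and $\mu^-_r$ coincide on a neighborhood of a disk $D_-$ isotopic to $I^-(p) \cap S_-$.

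For condition (2), let $(S_0^-, \mu^{-,0}_l, \mu^{-,0}_r)$ and $(S_0^+, \mu^{+,0}_l, \mu^{+,0}_r)$ denote the surfaces obtained from $(S_\pm, \mu^\pm_l, \mu^\pm_r)$ by collapsing $D_\pm$ as in Definition \ref{df:dcollaps}. To build the isometries $\phi_l, \phi_r$, I observe that the region $\Omega \setminus (I^+(p) \cup I^-(p))$ contains no interaction point, so it admits a smooth time-like unit vector field $V$ tangent to the particles and compatible with $u_\pm$ near $\partial D_\pm$; following the flow of $V$ yields a diffeomorphism $\psi : S_- \setminus D_- \to S_+ \setminus D_+$ which, by Remark \ref{rk:surface} applied to surfaces with boundary, is an isometry for both the left and right metrics. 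The loops $\partial D_+$ and $\partial D_-$ are freely homotopic in $M_{reg}$ through $V$-orbits, so they carry the same holonomy, a rotation of some angle $\theta$ encoded by the HS-link of $p$; this $\theta$ is therefore the cone angle of the collapsed singularity on both sides. Uniqueness of a hyperbolic cone-disk with prescribed boundary holonomy (by Lemma \ref{lm:holonomy} applied to the doubled disks) then allows $\psi$ to extend across the collapsed singularities to isometries $\phi_l : (S_0^-, \mu^{-,0}_l) \to (S_0^+, \mu^{+,0}_l)$ and $\phi_r : (S_0^-, \mu^{-,0}_r) \to (S_0^+, \mu^{+,0}_r)$.

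For condition (3), $\phi_l$ and $\phi_r$ both coincide with $\psi$ on $S_- \setminus D_-$, and since $\mu^{\pm,0}_l$ and $\mu^{\pm,0}_r$ agree near the collapsed points, the composition $\phi_r^{-1} \circ \phi_l$ is a rotation fixing the collapsed singularity. The positive-mass condition on the HS-link of $p$ forces at least two cone points in each of $D_\pm$, so Lemma \ref{lm:rig} applies and shows this rotation is trivial; hence $\phi_l$ and $\phi_r$ coincide near the collapsed point, and a homotopy between them is then obtained by interpolating on the collapsed disks, where they already agree both on the boundary and near the singularity.

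The main obstacle is the construction of the flow identification $\psi$ together with the matching of the cone angles of the collapsed singularities on $S_0^-$ and $S_0^+$. This rests on recognizing $\partial D_+$ and $\partial D_-$ as freely homotopic loops in $M_{reg}$ whose common holonomy is determined by the AdS-cone structure at $p$ over its HS-link, and on translating this holonomy into a cone angle of the resulting collapsed singularity on each side. A secondary subtle point is the application of Lemma \ref{lm:rig}, for which one must verify that the positive-mass hypothesis on the link genuinely forces at least two particles on each time-oriented side of the collision, ruling out low-complexity corner cases that would have to be treated separately.
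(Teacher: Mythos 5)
There is a genuine gap, and it sits exactly at the one point where the proposition has real content, namely condition (3) of Definition \ref{df:double}. Your argument rests on the claim that following a timelike flow produces a single map $\psi:S_-\setminus D_-\rightarrow S_+\setminus D_+$ that is \emph{simultaneously} an isometry for the left and the right metrics, so that $\phi_l$ and $\phi_r$ both restrict to $\psi$ off the collapsed disks and condition (3) drops out almost for free. No such simultaneous isometry exists in general: as noted in the remark following Lemma \ref{lm:surface}, the isometries $\phi_l:(S,\mu_l)\rightarrow(S',\mu'_l)$ and $\phi_r:(S,\mu_r)\rightarrow(S',\mu'_r)$ between two surfaces are in general \emph{different} maps, and the pair $(\mu_l,\mu_r)$ is only defined up to separate isotopies of the two factors (moving forward in time twists the left metric relative to the right one). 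If your $\psi$ existed, condition (3) would hold formally in any configuration admitting a timelike flow between the two slices; this contradicts the counterexample in the remark after the proposition (Figure \ref{fg:trans}, right-hand side), where conditions (1) and (2) hold but $\phi_l\circ\phi_r^{-1}$ is a nontrivial rotation of angle equal to the distance between two collision points. The correct proof must use the hypothesis that there is a \emph{single} collision point of positive mass: the holonomy of $\pi_1(\Sigma_{p,\reg})$ fixes a point of $AdS_3$ (Lemma \ref{lm:holadm}), hence is conjugate into the diagonal of $SO_0(2,1)\times SO_0(2,1)$ (Lemma \ref{lm:stab}); since $\pi_1(\Sigma_p)$ is generated by $\pi_1(D_-)$ and $\pi_1(D_+)$, and one can normalize developing maps so that $\hol_l=\hol_r$ on $\pi_1(D_+)$ while $\hol_r=\tilde R\,\hol_l\,\tilde R^{-1}$ on $\pi_1(D_-)$, the diagonality forces $\tilde R=\mathrm{Id}$. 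This AdS-geometric input is entirely absent from your argument.

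Two secondary problems. First, Lemma \ref{lm:rig} is misapplied: it asserts that the collapsing construction on a \emph{single} triple $(S,\mu_l,\mu_r)$ is independent of the isotopies used to make the two metrics coincide on the disk; it says nothing about the comparison of $\phi_l$ and $\phi_r$ between the before- and after-collision surfaces, which is precisely what condition (3) is about. Moreover your claim that positive mass forces at least two cone points in each of $D_\pm$ is unfounded (a collision may emit a single particle), and unnecessary, since condition (3) is automatic when one of the surgery disks contains a single cone point. Second, for condition (2) the clean route is the one the paper takes: identify the fundamental groups of the two closed collapsed surfaces via the inclusions into $\pi_1(M)$, observe that their left (resp.\ right) holonomies agree, and invoke Lemma \ref{lm:holonomy} to get $\phi_l$ and $\phi_r$ as isometries of closed cone surfaces, isotopic because they induce the same map on fundamental groups; your boundary-extension argument via ``doubled disks'' is not needed and is harder to justify.
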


\fb{
\emph{Note:}
In the proof of this proposition and in the rest of the paper
we will consider developing maps and holonomies
of a singular manifold $X$. 
So we need to consider the universal covering and the fundamental
groups of the regular part of $X$, whereas in general we will not be interested in
the fundamental group and the universal covering of $X$.
For this reason, from now on
$\pi_1(X)$ and $\tilde X$  will denote respectively
the fundamental group and the universal covering of the regular part of $X$.
}

\begin{proof}
By Proposition \ref{pr:isomdsk},
up to changing $\mu^+_l$ and $\mu^+_r$ by an isotopy,
we may suppose that they
coincide around a disk $D_+$ containing the singular points $p_i=s_i\cap S_+$.
Analogously we may suppose that $\mu^-_l$ and $\mu^-_r$ coincide on  a disk
$D_-$ which contains the singular points $p'_i=s'_i\cap S_-$.

By the positivity of the mass of the collision point, the holonomy
of $\partial D_+$ for $\mu^+_\bullet$ is elliptic of angle $\theta_0\in (0,2\pi)$. 
In particular there
is an embedding of a neighborhood of $\partial D_+$ into the model space  $H_{\theta_0}$
of the cone angle $\theta_0$.

So we can consider the surface $(S_0, \mu_l^0, \mu_r^0)$ obtained by collapsing $D_+$
on $(S_+, \mu_l^+, \mu_r^+)$.  Analogously let $(\hat S_0, \hat\mu_l^0, \hat\mu_r^0)$
be the surface obtained by collapsing $D_-$ on $(S_-, \mu_l^-, \mu_r^-)$.

Let us regard the fundamental group as the set of covering transformations 
on the universal cover.
In particular, any lifting on the universal covering of the inclusions
\[
  (S_-\setminus D_-)\rightarrow S_-\rightarrow M\qquad
  (S_+\setminus D_+)\rightarrow S_+\rightarrow M
\]
determines inclusions $\pi_1(S_-\setminus D_-)\rightarrow\pi_1(S_-)\rightarrow\pi_1(M)$
and $\pi_1(S_+\setminus D_+)\rightarrow\pi_1(S_+)\rightarrow\pi_1(M)$.

Since $S_-\setminus D_-$ and $S_+\setminus D_+$ are isotopic in $M$, we may fix
those liftings
\[
 \widetilde{(S_-\setminus D_-)}\rightarrow \tilde S_-\rightarrow \tilde M\qquad
  \widetilde{(S_+\setminus D_+)}\rightarrow \tilde S_+\rightarrow \tilde M
\]
so that $\pi_1(S_-\setminus D_-)$ is identified to $\pi_1(S_+\setminus D_+)$ as subgroups
of $\pi_1(M)$.

Finally since the inclusions $S_-\setminus D_-\rightarrow \hat S_0$ and
$S_+\setminus D_+\rightarrow S_0$ are homotopy equivalence, we may fix  identifications
between $\pi_1(S_-\setminus D_-)$ and $\pi_1(\hat S_0)$ and $\pi_1(S_+\setminus D_+)$
and $\pi_1(S_0)$. Notice that those identifications are unique up to conjugation and the choice
of concrete ones is equivalent to choosing liftings $\widetilde{S_+\setminus D_+}\rightarrow\tilde S_0$
and $\widetilde{S_-\setminus D_-}\rightarrow\tilde{\hat S}_0$ of the natural inclusions.

Notice that through these identifications, the holonomies of $\mu_l^0, \mu_r^0$ coincide
with the holonomies of $\hat\mu_l^0, \hat\mu_r^0$, so, by Proposition \ref{lm:holonomy}, there exist
isometries
\[
\phi_l:(\hat S_0, \hat\mu^0_l)\rightarrow (S_0, \mu^0_l)\qquad
\phi_r:(\hat S_0, \hat\mu^0_r)\rightarrow (S_0, \mu^0_r)
\]
which admit liftings to the universal covering
$\tilde\phi_l, \tilde\phi_r:\tilde {\hat{S_0}}\rightarrow\tilde S_0$
which act trivially on the fundamental groups
\[
   \tilde\phi_l\circ\gamma\circ(\tilde\phi_l)^{-1}=\tilde\phi_r\circ\gamma\circ(\tilde\phi_r)^{-1}=\gamma ~,
\]
where we are using the identification $\pi_1(S_0)=\pi_1(\hat S_0)$ fixed above.

Notice that $\phi_l$ and $\phi_r$ are isotopic, since they induce the same map
on the fundamental groups.

In order to prove condition (3) in Definition \ref{df:double},
we also fix liftings $\tilde D_+\rightarrow \tilde S_+$ and $\tilde D_-\rightarrow \tilde S_-$,
so that $\tilde D_+\cap \widetilde{S_+\setminus D_+}$ and
$\tilde D_-\cap\widetilde{S_-\setminus D_-}$ are the images of liftings $c_+$ and $c_-$
of $\partial D_+$ and $\partial D_-$, respectively. We may moreover suppose that the stabilizers
of $c_+$ and $c_-$ in $\pi_1(M)$ are the same $\mathbb Z$-subgroup generated by $\gamma_0$.

We fix developing maps
\[
  dev_+^\bullet:(\tilde S_+, \mu_+^\bullet)\rightarrow\mathbb H^2
\]
so that they coincides on $\tilde D_+$.
Notice that the restriction of $dev_+^\bullet$ on $\widetilde{S_+\setminus D_+}$
extends to a developing map
$dev_0^\bullet :\tilde S_0\rightarrow\mathbb H^2$.

Clearly, $\widehat{dev}_0^\bullet:=dev_0^\bullet\circ\phi_\bullet$
is a developing map for $\hat\mu^0_\bullet$.

Finally let $dev_-^l, dev_-^r$ be the developing maps
of $\mu_-^\bullet$ which coincide with $\widehat{dev}_0^\bullet$ on
$\widetilde{S_-\setminus D_-}$.
Notice that on $c_-$ the holonomy representations of the maps
$dev_-^l$ and $dev_-^r$ differ exactly by the rotation $\tilde R\in PSL(2, \R)$
which corresponds to $\phi_r\phi_l^{-1}$ around the cone point.
In particular, if $\gamma\in\pi_1(D_-)$ we have
\[
\hol_r(\gamma)=\tilde R\hol_l(\gamma)\tilde R^{-1}~.
\]

Notice that
$\pi_1(\Omega)$ is the amalgamated product of $\pi_1(S_+)$ and $\pi_1(S_-)$
with the identification of $\pi_1(S_+\setminus D_+)$ and $\pi_1(S_-\setminus D_-)$
described above.

The holonomies of the developing maps $\dev_\pm^\bullet$ glue to a pair of representations
\[
 (\hol_l, \hol_r):\pi_1(\Omega)\rightarrow PSL_2(\mathbb R)^2
\]
which coincide with the holonomy representation of $\Omega$.

If $\Sigma$ is the link around the collision point, then 
$\pi_1(\Sigma)$ is the amalgamated product of $\pi_1(D_-)$ and $\pi_1(D_+)$.
If $\gamma\in\pi_1(D_+)$ then $\hol_r(\gamma)=\hol_l(\gamma)$, whereas
if $\gamma\in\pi_1(D_-)$ then $\hol_r(\gamma)=\tilde R\hol_l(\gamma)\tilde R^{-1}$.

Imposing that the restrictions of the representations $\hol_l$ and $\hol_r$ on $\pi_1(\Sigma)$
are the same, we obtain that $\tilde R=Id$.
\end{proof}

\begin{figure}
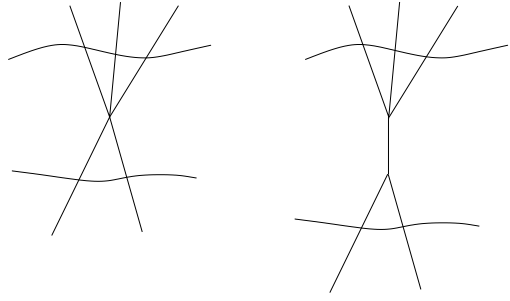

\input transex.pstex_t
\caption{Two different interaction possibilities}\label{fg:trans}
\end{figure}

\begin{remark}
Let us consider the examples in Figure \ref{fg:trans}. In the example on the left,
Proposition \ref{pr:collision} indicates that
the left and right metrics on the surface
below and the left and right metrics on the surface above are related by a double surgery

In the example on the right, this is no longer true. On the other hand, conditions (1) and (2) in the
definition of double surgery are still valid.
The same argument used in the proof of Proposition \ref{pr:collision} shows that in
this case the map $\phi_l\circ\phi_r^{-1}$ is a rotation
about the collapsed point of angle equal to the distance between the collision points.
\end{remark}

\subsection{Transverse vector fields after a collision}

It might be interesting to remark that the description made in
Proposition \ref{pr:collision} of the surgery on the left and right hyperbolic metrics corresponding
to a collision only holds -- and actually only makes sense -- if there is a
space-like surface with a transverse vector field both before and after the
collision. However the existence of such a surface before the collision does
not ensure the existence of one after the collision, even for simple
collisions.


A simple example of such a phenomenon can be obtained by an extension of
the example given in Section \ref{ssc:good} 
of an AdS space with two particles containing no
space-like surface with a transverse vector field. Consider the space
$M_\theta$ described in that example, with $\theta<l$, so that $M_\theta$
contains a space-like surface with a transverse vector field. This space
has two cone singularities, $d_0$ and $d_1$, each containing one of the
endpoints of $s_0$. It is now possible to perform on this space a simple
surgery as described in \cite[Section 7.1]{colI}
replacing the part of $d_1$ in
the past of its intersection with $s_0$ by two cone singularities,
say $d_2$ and $d_3$,
intersecting at the endpoint of $s_0$. This can be done in such a way
that the angle between the plane containing $s_0$ and $d_2$ and the
plane containing $s_0$ and $d_0$, is equal to $l$. The argument given
above for $M_{ex}$ then shows that there is no space-like surface
with a transverse vector field in a spacial slice before the collision.


\subsection{The graph of interactions}
\label{ssc:graph}

The previous section contains a description of the kind of surgery on
the left and right hyperbolic metrics corresponding to a collision of
particles. Here a more global description is sought, and we will associate
to an AdS manifold with colliding particles a graph describing the relation
between the different spacial slices. In all this part we fix an AdS manifold
with colliding particles, $M$.

Let $\Omega, \Omega'$ be two spacial slices in $M$. They are {\bf equivalent}
if each space-like surface in $\Omega$ is isotopic to a space-like surface
in $\Omega'$. Note that this clearly defines an equivalence relation on
the spacial slices in $M$.

\begin{defi} \label{df:good}
$M$ is a {\bf good} AdS manifold with colliding particles if any spacial
slice in $M$ is equivalent to a good spacial slice.
\end{defi}

Clearly if two good spacial slices are equivalent then their holonomies
are the same, so that their left and right hyperbolic metrics are isotopic
by Proposition \ref{lm:holonomy}.

Some of the examples
constructed by \cite[Proposition 7.7]{colI} are indeed good AdS manifolds
with colliding particles. (To obtain one such example, one can construct
an AdS manifold with colliding particles by a surgery on a Fuchsian space
with one particle, replacing a neighborhood of the particle by a tube
where two particles collide to become two new particles, so that the two 
particles are almost parallel both before and after the collision.)

\begin{defi}
Let $\Omega_-$ and $\Omega_+$ be two spacial slices in $M$.
They are {\bf adjacent} if the union of the compact connected
components of the complement of the interior of
$\Omega_-\cup \Omega_+$ in $M$ contains exactly one collision.
We will say that $\Omega_-$ is {\bf anterior} to $\Omega_+$ if
this collision is in the future of $\Omega_-$ and in the
past of $\Omega_+$.
\end{defi}

Note that this relation is compatible with the equivalence
relation on the spacial slices: if $\Omega_-$ is adjacent to
$\Omega_+$ and $\Omega'_-$ (resp. $\Omega'_+$) is equivalent
to $\Omega_-$ (resp. $\Omega_+$) then $\Omega'_-$ is adjacent
to $\Omega'_+$. Moreover if $\Omega_-$ is anterior to $\Omega_+$
then $\Omega'_-$ is anterior to $\Omega'_+$.

\begin{defi}
The {\bf graph of spacial slices} is the oriented graph associated
to a good AdS manifold with colliding particles $M$ in the following way.
\begin{itemize}
\item The vertices of $G$ correspond to the equivalence classes of
spacial slices in $M$.
\item Given two vertices $v_1, v_2$ of $G$, there is an edge between
$v_1$ and $v_2$ if the corresponding spacial slices are adjacent.
\item This edge is oriented from $v_1$ to $v_2$ if the spacial
slice corresponding to $v_1$ is anterior to the spacial slice corresponding
to $v_2$.
\end{itemize}
\end{defi}

\begin{remark}
Notice that two different admissible AdS structures in $\struct$
may have different graph of spacial slices. On the other hand, it is clear
that the graphs of spacial slices of spacetimes in a small neighborhood
of some fixed space $M\in\struct$ naturally contain the
graph of spacial slices of $M$.
\end{remark}

Note that, for the constructions that follow and in particular for Theorem \ref{tm:main},
it would be sufficient to require only, rather than Definition \ref{df:good}, that there is a path
on the graph of spatial slices of $M$ whose vertices are equivalent to good slices.

\subsection{The topological and geometric structure added to the graph of interactions}
\label{ssc:structure}

Clearly the graph of spacial slice is not in general a tree -- there might
be several sequences of collisions leading from one spacial slice to
another one. A simple example is given in Figure \ref{fg:graph}, where the
graph of a manifold with colliding particles is shown together with a
schematic picture of the collisions.

\begin{figure}[ht]
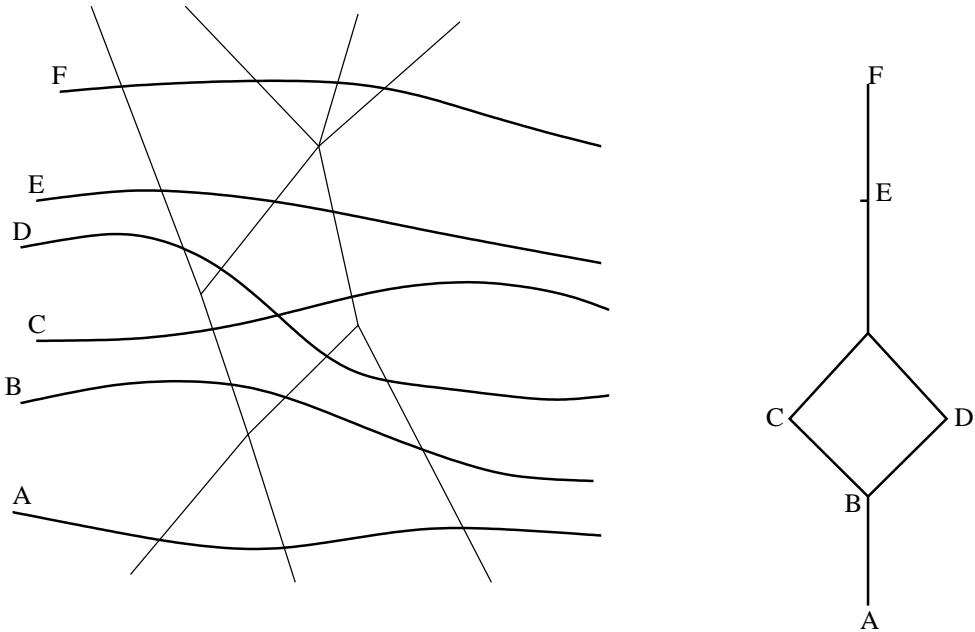

\begin{center}
\input graph.pstex_t
\end{center}
\caption{The graph of spacial slices.}
\label{fg:graph}
\end{figure}

The graph of spacial slices is clearly not sufficient to recover an AdS
manifold with colliding particles, additional data are needed.

\begin{defi}
A {\bf topological data} associated to an oriented graph is the choice of:
\begin{itemize}
\item For each vertex $v$, of a closed surface $S_v$ with $n$
marked points $p_1, \cdots, p_n$, and a $n$-tuple $\theta_v=
(\theta_1,\cdots, \theta_n)\in (0,2\pi)^n$.
\item For each oriented edge $e$ with vertices $e_-$ and $e_+$, of:
  \begin{enumerate}
  \item a homotopy class of disks $D_{e,+}\subset S_{e_+}$, where the
homotopies are in the complement of the marked points $p_i$ (or equivalently
they are homotopies of the complements of the $p_i$ in $S_v$),
  \item a homotopy class of disks $D_{e,-}\subset S_{e_-}$, where again
the homotopies fix the marked points,
  \item an isotopy class $i_e$ of homeomorphisms from
$S_{e_-}\setminus D_{e,-}$ to $S_{e_+}\setminus D_{e,+}$ sending
the marked points to the marked points.
  \end{enumerate}
\end{itemize}
\end{defi}

\begin{defi} \label{df:geometric}
A {\bf geometric data} associated to an oriented graph endowed with
a topological data is the choice, for each vertex $v$, of two
hyperbolic metrics $\mu_l(v),\mu_r(v)$ on $S_v$, with a cone singularity
of angle $\theta_i$ at $p_i$, so that, for each edge $e$ with endpoints $e_-$
and $e_+$, $(S_{e_+}, \mu_l(e_+),\mu_r(e_+))$ is obtained from
$(S_{e_-}, \mu_l(e_-),\mu_r(e_-))$ by
a double surgery with surgery disks $D_{e_+}$ and $D_{e_-}$ respectively,
 as seen in Definition \ref{df:double}.
\end{defi}

Given a good AdS space with colliding particles $M$ we can consider its graph
of collisions $\Gamma$, there is a natural topological and geometric data associated
to $M$ on $\Gamma$. Given a vertex $v$ of the graph of collisions $\Gamma$, it
corresponds to a good spacial slice $\Omega_v$ in $M$, and we take as $S_v$ a
space-like 
surface in $\Omega_v$. The marked points correspond to the intersections
of $S_v$ with the particles in $S_v$. By definition of a good spacial slice,
$S_v$ admits a transverse vector field, so one can define the left and right
hyperbolic metrics $\mu_l(v)$ and $\mu_r(v)$ on $S_v$ through Definition
\ref{df:610}. The fact that those two metrics are well defined follows from
Lemma \ref{lm:surface}.

Now consider an edge of $\Gamma$, that, is a collision between particles.
Let $e_-$ corresponds to the good spacial slice $\Omega_-$ in the past of the collision,
and $e_+$ to the good spacial slice $\Omega_+$ in the future of the collision.
It follows from Proposition \ref{pr:collision} that
$(S_{e_+}, \mu_l(e_+),\mu_r(e_+))$ is obtained from
$(S_{e_-}, \mu_l(e_-), \mu_r(e_-))$ by a double surgery.

\section{From the geometric data to the structure}
\label{sc:final}

In this section we fix a maximal good AdS  spacetime $M_0\in\struct$ with collision
and we consider the corresponding topological data $X$.
Let $\mathcal D(X)$ the set of geometric data with topological data $X$.
\fb{ An element of $\mathcal D(X)$ is basically a collection of
pairs of singular hyperbolic metrics $\mu_l(v)$ and $\mu_r(v)$  on $S_v$
for every vertex of the graph of interaction, where the cone singularities are fixed by the topological
data. Thus we can regard $\mathcal D(X)$ as a subset of the Cartesian product 
$\Pi_{v}\mathcal T(S_v, \theta_v)^2$, where $\mathcal T(S_v, \theta_v)$ denotes the Teichm\"uller
space of singular hyperbolic metrics with cone angles $\theta_1\ldots\theta_n$.
We will consider on $\mathcal D(X)$ the induced topology.
}

Notice that there is a neighborhood $\mathcal U$ of $M_0\in\struct$
such that:
\begin{itemize}
\item The graph of spacial slice of spacetimes in $\mathcal U$ contains the graph of
spacial slices of $M_0$.
\item The topological data of $M_0$ coincides with the restriction of the topological data
of any $M\in\mathcal U$ to the graph of $M_0$.
\item Every vertex of the graph of $M_0$ corresponds to a good spacial slice for
any structure of $\mathcal U$ (this because the transversality condition is open).
\end{itemize}
This defines a map
\[
    GD:\mathcal U\rightarrow\mathcal D(X)
\]
sending any structure $M\in\mathcal U$ to the corresponding geometric data.

\begin{theorem} \label{tm:main}
Up to shrinking $\mathcal U$, the map $GD$ is injective and open.
\end{theorem}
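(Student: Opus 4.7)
The plan is to factor the map $GD$ through the holonomy representation and reduce the theorem to Theorem \ref{struct:thm}. More precisely, I will show that a geometric datum in $\mathcal{D}(X)$ determines, in a neighborhood of $M_0$, an admissible representation in $\mathcal{R}(g,T,\theta)$, and that the resulting map $\mathcal{D}(X) \dashrightarrow \mathcal{R}(g,T,\theta)$ is a local homeomorphism onto the relevant slice. Combined with Theorem \ref{struct:thm}, which says that $\struct \to \mathcal{R}(g,T,\theta)$ is a local homeomorphism, this yields both injectivity and openness of $GD$ in a neighborhood $\mathcal{U}$ of $M_0$.

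First I would construct the bridge from geometric data to holonomy. Fix a vertex $v$ of the graph $\Gamma$ of spacial slices of $M_0$. A pair $(\mu_l(v), \mu_r(v))$ determines two representations $\hol_l(v), \hol_r(v): \pi_1(S_{v,\reg}) \to PSL(2,\R)$ (uniquely up to conjugacy, by Lemma \ref{lm:holonomy}); after choosing a lift near the original representation, the inverse of the map $I$ from \eqref{eq:can} gives a representation $\hol(v): \pi_1(S_{v,\reg}) \to SO_0(2,2)$. The fundamental group of $M_{\reg}$ is obtained from the $\pi_1(S_{v,\reg})$ by amalgamation (or HNN extension) along the fundamental groups of the links of the collision points, one for each edge $e$ of $\Gamma$; the identification map $i_e$ in the topological data specifies which subgroups are identified. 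The key observation is that condition (3) in Definition \ref{df:double} of a double surgery is precisely what is needed to guarantee that the representations $\hol(v)$ fit together consistently into a single representation $\hol : \pi_1(M_{\reg}) \to SO_0(2,2)$: indeed, the proof of Proposition \ref{pr:collision} shows that the rotation angle $\theta$ measuring the failure of $\phi_l$ and $\phi_r$ to coincide near the collapsed point equals the defect in the gluing of left and right holonomies along $\pi_1(\Sigma_p)$, so condition (3) ($\theta = 0$) is the exact compatibility required. Moreover $\hol$ so constructed is automatically admissible in the sense of Definition \ref{df:adrep} because meridian holonomies are elliptic of the prescribed angles (this is built into the hyperbolic cone metrics), and at each collision point the holonomy of the link fixes a common point of $AdS_3$ (this follows from Lemma \ref{lm:stab} applied to the diagonal $SO_0(2,1)$-action coming from the identification of left and right metrics on the surgery disk).

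Having established a continuous map $H: \mathcal{D}(X) \to \mathcal{R}(g,T,\theta)$ defined near $GD(M_0)$, with the property that $H \circ GD$ coincides with the holonomy map on $\mathcal{U}$, injectivity of $GD$ follows at once: if $GD(M) = GD(M')$ then the holonomies of $M$ and $M'$ agree, and Theorem \ref{struct:thm} gives $M = M'$ in $\struct$ after shrinking $\mathcal{U}$. For openness I would argue the other direction. Given a small deformation $G'$ of $GD(M_0)$ in $\mathcal{D}(X)$, the representation $H(G')$ is close to $H(GD(M_0))$ in $\mathcal{R}(g,T,\theta)$, so by Theorem \ref{struct:thm} there exists a spacetime $M' \in \struct$ close to $M_0$ with holonomy $H(G')$. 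It remains to check that $GD(M') = G'$. This follows because the left and right metrics of $M'$ on each slice are determined by their holonomies (Lemma \ref{lm:holonomy}), which by construction coincide with the holonomies of the hyperbolic metrics in $G'$; the double surgery data at each collision point are determined by the geometry of the links, which in turn are determined by the holonomy near the interaction points (as in the second part of Lemma \ref{bo:lem}).

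The main subtlety, and the step I would treat most carefully, is the passage from the pair $(\hol_l, \hol_r)$ to a well-defined lift to $SO_0(2,2)$. Since $I$ has kernel $\Z/2\Z$, the inverse is only defined up to this ambiguity, but continuity in a neighborhood of the original holonomy pins down a unique lift; this is why the statement is local and requires shrinking $\mathcal{U}$. A related subtle point is verifying that the admissibility conditions at collision vertices in Definition \ref{df:adrep} really are satisfied by the glued representation: one must use that the left and right metrics agree in a neighborhood of the surgery disk (condition (1) of Definition \ref{df:double}), so that the image of the diagonal subgroup of $\pi_1(\Sigma_p)$ under $I^{-1}$ lies in a point stabilizer, via Lemma \ref{lm:stab}. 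All remaining steps — existence of a transverse vector field on the deformed slices, compatibility of the double surgery with the deformation — are openness statements that follow from standard arguments about $(G,X)$-structures (Lemma \ref{ep:lem}) once the holonomy is controlled.
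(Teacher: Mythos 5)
Your proposal is correct and follows essentially the same route as the paper: factor $GD$ through a map $H:\mathcal D(X)\to\mathcal R(g,T,\theta)$ built by gluing the left/right holonomies of the slices via the identification maps $i_e$, use condition (3) of Definition \ref{df:double} to verify the diagonal (fixed-point) condition at each collision link, deduce injectivity from Lemma \ref{lm:holonomy}, and conclude via Theorem \ref{struct:thm}. One small imprecision: the Van Kampen amalgamation of the $\pi_1(S_v)$ is taken over the subgroups $\pi_1(S_v\setminus D)$ (complements of the surgery disks), not over the fundamental groups of the links as you state, though this does not affect the argument.
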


\begin{proof}
We will construct an open and injective map
\[
  H: \mathcal D(X)\rightarrow\mathcal R(g, T, \theta)
\]
such that the holonomy map $hol:\struct\rightarrow \mathcal R(g, T, \theta)$
factors as $hol=H\circ GD$. The conclusion of the proof will follow from the existence
of this map and from Theorem \ref{struct:thm}.

In order to construct the map $H$ we give a description of the fundamental group of $M$ by means of the topological data.
We fix a path in the graph of $M_0$, say $v_1,\ldots, v_n$,
 joining the initial vertex to the final vertex, and consider the corresponding sequence
 of good space-like surfaces $S_1,\ldots, S_n$.

As in the proof of Proposition \ref{pr:collision} we may fix lifting of the natural inclusions
\[
   \widetilde{S_k\setminus D_{k,-}}\rightarrow\tilde S_{k},\qquad
    \widetilde{S_{k+1}\setminus D_{k,+}}\rightarrow\tilde S_{k+1},\qquad
  \tilde S_k\rightarrow \tilde{M}
\]
so that the corresponding inclusions of fundamental groups
$\pi_1(S_k\setminus D_{k,\bullet})<\pi_1(S_k)<\pi_1(M)$ make the following
diagram commutative
\begin{equation}\label{eq:lif}
\begin{CD}
\pi_1(S_k\setminus D_{k,-})@>(i_{e_k})_*>>\pi_1(S_{k+1}\setminus D_{k,+})\\
@VVV@VVV\\
\pi_1(M)@=\pi_1(M)
\end{CD}
\end{equation}

Notice that when the liftings $\tilde S_1\setminus D_{1,-}\rightarrow\tilde S_1$ and
$\tilde S_1\rightarrow M$ are chosen, all the other liftings are fixed by the commutativity of
(\ref{eq:lif}).

An inductive argument, based on the van Kampen theorem, shows that the induced map
\[
\pi_1(S_1)*\pi_1(S_2)*\ldots*\pi_1(S_n)\rightarrow\pi_1(M)
\]
is surjective with kernel generated by elements
$(i_{e_k})_*(\gamma)\gamma^{-1}$ for $\gamma\in\pi_1(S_k\setminus D_{k,-})$.

In particular given a geometric data $\mu=(\mu_l(v), \mu_r(v))$, we may fix the holonomy representations
of the left and right metrics so that they determine a representation
\[
 H=H(\mu):  \pi_1(M)\rightarrow PSL_2(\mathbb R)\times PSL_2(\mathbb R)~.
\]

More precisely we fix the holonomies of $\mu_l(v_1), \mu_r(v_1)$, say $h_l^1, h_r^1$,
in their conjugacy classes. Then, we can fix recursively
the holonomies of $\mu_l^k, \mu_r^k$ in their conjugacy classes
so that
\[
h_\bullet^k(\gamma)=h_{\bullet}^{k+1}((i_{e_k})_*(\gamma))
\]
for all $\gamma\in\pi_1(S_{k}\setminus D_{k,-})$.
Notice that once $h_\bullet^1$ is fixed, all the other representations
are uniquely determined, since the  holonomy  $S_v\setminus D$
is not elementary.
In particular,  though the representation $H$ depends on some choices
(including the isomorphism between $\pi_1(M)$ and the quotient of the
free product of $\pi_1(S_i)$), its conjugacy class is well defined.

By definition $hol=H\circ GD$.
So in order to conclude the proof we need to prove that
\begin{itemize}
\item $H$ is injective;
\item $H$ takes value in $\mathcal R(g, T, \theta)$ (that is, in the space of admissible representations
as in Definition \ref{df:adrep});
\item  $H$ is an open map.
\end{itemize}

The first point easily follows from Lemma \ref{lm:holonomy}, since the representation
$H(\mu)$ contains all the holonomies of the metrics $\mu(v_i)$.

In order to prove that $H(\mu)$ is admissible, we need to check that
its restriction to the fundamental group of the link of any collision
point is conjugated to a diagonal representation. 
As we will see, this is essentially a consequence  of
property (3) of the definition of double surgery (as in Definition \ref{df:double}).

In fact, fix a collision point $p$, and suppose that $S_k, S_{k+1}$ are the surfaces
separated by $p$.
Fix a lifting of  the natural inclusions $\tilde D_{k,-}\rightarrow\tilde S_{k}$ so that:
\begin{itemize}
\item the intersection of the closure of $\tilde D_{k,-}$ and
$\widetilde{S_{k}\setminus D_{k,-}}$ is not empty and it corresponds to a
lifting $\tilde \partial_-$ of $\partial D_{k,-}$.
\item
Analogously the intersection of the closure of $\tilde D_{k,+}$ and
$\widetilde{S_{k+1}\setminus D_{k,+}}$ corresponds to a
lifting $\tilde \partial_+$ of $\partial D_{k,-}$.
\item
the stabilizers of $\tilde \partial_-$ and $\tilde \partial_+$ in $\pi_1(M)$ are the same
$\mathbb Z$ subgroup generated by $\gamma_0$.
\end{itemize}

Notice that with these choices the fundamental group of the link $\Sigma$ of $p$
is generated by $\pi_1(\tilde D_{k,+})$ and $\pi_1(\tilde D_{k,-})$.
Analogously if $\Omega$ is the union of adjacent spacial slices corresponding to
$v_k$ and $v_{k+1}$, its fundamental group in $\pi_1(M)$ is generated by $\pi_1(S_k)$
and $\pi_1(S_{k+1})$

Changing the metrics $\mu_l(v_k), \mu_r(v_k)$ and $\mu_l(v_{k+1}), \mu_r(v_{k+1})$ 
by some isotopy, we can require that they coincide on $D_{k,-}$ and $D_{k,+}$ respectively.
Let $S_{c,-}, S_{c,+}$ be the surfaces obtained by collapsing respectively
$D_{k,-}$ on
$(S_{k},\mu_l(v_k), \mu_r(v_k))$ and $D_{k,+}$ on $(S_{k+1},\mu_l(v_{k+1}), \mu_r(v_{k+1}))$.

Finally choose liftings of the isometries $\phi_l, \phi_r:S_{c,-}\rightarrow S_{c,+}$, say
$\tilde\phi_l, \tilde\phi_r$, so that
\[
(\phi_\bullet)^{-1}\circ\gamma\circ\phi_\bullet=\gamma
\]
for all $\gamma\in\pi_1(S_{c,-})=\pi_1(S_k\setminus D_{k,-})=\pi_1(S_{k+1}\setminus D_{k,+})=\pi_1(S_{c,+})$.

We fix now
\begin{enumerate}
\item developing maps $dev^-_l, dev^-_r:\tilde S_{k}\rightarrow\mathbb H^2$
of $\mu_l(v_k), \mu_r(v_k)$ so that they coincide on $\tilde D_{k,-}$,
\item developing maps $d^-_l, d^-_r:\tilde S_{c,-}\rightarrow\mathbb H^2$
extending $dev^-_\bullet(v_k)$ on $\widetilde{S_k\setminus D_{k,-}}$,
\item developing maps $d^+_l, d^+_r:\tilde S_{c,+}\rightarrow\mathbb H^2$ defined
as $d^+_\bullet=d^-_\bullet\circ(\tilde\phi_\bullet)^{-1}$,
\item developing maps $dev^+_l, dev^+_r:\tilde S_{k+1}\rightarrow\mathbb H^2$
which extend $d^+_\bullet $ on $\widetilde{S_{k+1}\setminus D_{k,+}}$.
\end{enumerate}

Notice that the holonomies of $dev^\pm_l, dev^\pm_r$ glue to a representation
$H_\Omega: \pi_1(\Omega)\rightarrow PSL(2,\mathbb R)\times PSL(2, \mathbb R)$
which is conjugated to $H|_{\pi_1(\Omega)}$.

So it is sufficient to prove that $(H_\Omega)|_{\pi_1(\Sigma)}$ is a diagonal representations.
Since we are assuming that $dev^-_l$ and $dev^-_r$ coincide on $\tilde D_{k,-}$,
 it is sufficient to prove that $dev^+_l$ and $dev^+_r$ coincide on $\tilde D_{k,+}$.
Since $\mu_l(v_{k+1})$ coincides with $\mu_r(v_{k+1})$ on $D_{k,+}$, there exists
$\tilde R\in PSL(2,\mathbb R)$ such that
\begin{equation}\label{eq:kk}
dev^+_r=\tilde R\circ dev^+_l
\end{equation}
on $\tilde D_{k,+}$.

On the other hand, since $\partial D_{k,+}$ bounds a disk
in $S_{c,+}$ where the left and right metrics coincide, by condition (3)
of Definition \ref{df:double}, $\phi_l$ and $\phi_r$ coincide on $\partial D_{k,+}$ so
$\tilde\phi_l$ and $\tilde\phi_r$ coincide on $ \tilde\partial_+$.
So $d^+_l$ and $d^+_r$ coincide on $\tilde\partial_+$.
In particular $dev^+_r=dev^+_l$ on $\tilde\partial_+$, and this with (\ref{eq:kk}) implies that
$dev^+_r$ and $dev^+_l$ coincide on the whole $\tilde D_{k,+}$.
This finishes the proof that $H\in \mathcal R(g, T, \theta)$.

Finally we need to check that the map $H:\mathcal D(X)\rightarrow \mathcal R(g, T, \theta)$ is
open. Given $H'$ close to $H(\mu)$,  the representation
$H'|_{\pi_1(S_v)}$ is close to $H(\mu)|_{\pi_1(S_v)}$ so it is a pair of holonomies of hyperbolic
structures with cone angles $\mu'_l(v), \mu'_r(v)$. Since the trace of the $H'$-image of peripheral
elements of $\pi_1(S_v)$ is fixed, the cone angle at each point $x_i$ is just
$\theta(x_i)$.
Now in order to conclude we need to check that if $e=[v_-, v_+]$ is an edge of the graph
of the manifold $M$, $(S_{v_+},\mu'_l(v_+), \mu'_r(v_+))$ is obtained by a double surgery
on $(S_{v_-}, \mu'_l(v_-), \mu'_r(v_-))$ with surgery disks isotopic to $D_{v_-,-}$ and
$D_{v_+, +}$ and identification maps isotopic to $i_e$.

This fact can be easily proved by the same argument used in the proof of Proposition \ref{pr:collision}.
\fb{ Let $(S_0,\mu^0_l,\mu^0_r),$ and $(\hat S_0,\hat\mu^0_l, \hat\mu^0_r)$ 
be the surfaces obtained respectively from
 $(S_{v_+},\mu'_l(v_+), \mu'_r(v_+))$ and $(S_{v_-}, \mu'_l(v_-), \mu'_r(v_-))$
 by collapsing $D_{v_+,+}$ and $D_{v_-,-}$.
Notice that the holonomy of $(S_0,\mu^0_\bullet)$ coincides with the holonomy
of $(S_{v_+}\setminus D_{v_+,+},\mu_\bullet(v_+))$ and analogously the holonomy
 of $(\hat S_0,\hat\mu^0_\bullet)$ coincides with the holonomy
of $(S_{v_-}\setminus D_{v_-,-},\mu_\bullet(v_-))$.
Since $(S_{v_+}\setminus D_{v_+,+},\mu_\bullet(v_+))$ is isotopic to
 $(S_{v_-}\setminus D_{v_-,-},\mu_\bullet(v_-))$ in $M$ we deduce that
 the holonomies of $(S_0,\mu^0_\bullet)$ and $(\hat S_0, \hat\mu^0_\bullet)$
 are conjugate. Thus, by Lemma \ref{lm:holonomy}, the collapsed surfaces 
$S_0$ and $\hat S_0$ are isometric for both the left and right metrics.}

The fact that $H'$ restricted to the fundamental group of the link of the collision point between
$S_{v_-}$ and $S_{v_+}$ is diagonal implies that the condition (3) in the definition of
double surgery is satisfied.
\end{proof}

\appendix

\section{An example where a double surgery is needed}
\label{ap:A}

Let $S_1$ and $S_2$ be two hyperbolic surfaces with cone singularities of angles less than $2\pi$ 
and let $D_1$ and $D_2$ be two disks embedded in $S_1$ and $S_2$ respectively. We suppose that
there is a diffeomorphism preserving cone points $f:S_1\setminus D_1\rightarrow S_2\setminus D_2$
such that the holonomy of $S_1\setminus D_1$ is conjugated to the representation obtained by
composing the holonomy of $S_2\setminus D_2$ with the map 
$f_*:\pi_1(S_1\setminus D_1)\rightarrow\pi_1(S_2\setminus D_2)$ induced by $f$.
We will also assume in this appendix that the holonomy of $\partial D_2$ is elliptic of angle 
$\theta<2\pi$.

In Section \ref{ssc:surgeries} we have shown that collapsing $D_1$ in $S_1$ and $D_2$ in $S_2$
yields the same surface (up to isometry). This means that there are two surgeries
involved in the transformation from $S_1$ to $S_2$. First the disk $D_1$ is replaced by a disk $P_1$ containing
only a cone point, \jms{yielding} a surface $S$.  Then another disk $P_2$ of $S$ isotopic to $P_1$ is replaced by $D_2$.
We could expect at first glance that a single surgery would be sufficient; for example, that choices of disks of 
surgery can be made so that $P_1$ and $P_2$ coincide. 
But in this section we will prove that this cannot always be the case. 
We will find a criterion to establish whether a single surgery
is sufficient, and will then construct an example where this criterion fails.

Notice that the complement of any disk isotopic to $D_\bullet$ in $S_\bullet$ isometrically
embeds in $S$. We consider the minimal convex disk $\Delta$ 
isotopic to $D_\bullet$ constructed in  Proposition \ref{pr:isomdsk}.
(Notice that if $D_\bullet$ contains only $2$ singular points, the $\Delta$
degenerates to a segment, but the argument below can be adapted.)

We still have that $S_\bullet\setminus\Delta_\bullet$ embeds in $S$.
According to the following definition, the complement of its
image is a polygon with center $p$ which we denote by $P_\bullet$.

\begin{defi}
Let $S$ be a hyperbolic surface with cone singularities, and $p$ be a cone point on $s$.
A polygon with center $p$ in $S$, is an embedded disk $P$, such that
\begin{itemize}
\item $p$ is in its interior;
\item $P$ is the union of  hyperbolic triangles which have all a vertex at $p$.
\end{itemize}
\end{defi}

\begin{prop}\label{criterion:prop}
$S_2$ is obtained by a single surgery on $S_1$ if and only if $P_1$ and $P_2$ are
contained in a disk $\Pi$ embedded in $S$.
\end{prop}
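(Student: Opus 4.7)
The plan is to transfer the question to the common collapsed surface $S$ and identify a single surgery with the existence of a common enclosing disk $\Pi$. The argument is essentially bookkeeping on top of one rigidity input.

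I would begin with the following bookkeeping observation: if $E_i \subset S_i$ is any embedded disk isotopic to $D_i$, then by the minimality of $\Delta_i$ (Proposition \ref{pr:isomdsk} together with the remark following it) one may assume, up to isotopy, that $\Delta_i \subset E_i$. The canonical isometric embedding $S_i \setminus \Delta_i \hookrightarrow S$, which realizes $S_i \setminus \Delta_i$ as $S \setminus P_i$, then restricts to an isometric embedding of $S_i \setminus E_i$ onto $S \setminus \Pi_i$, where $\Pi_i \subset S$ is an embedded disk containing $P_i$. Conversely, any embedded disk $\Pi \subset S$ containing $P_i$ is obtained this way from some $E_i \subset S_i$ isotopic to $D_i$, simply by pulling back the complement. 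Thus disks $\Pi \subset S$ containing $P_i$ are in canonical bijection with isotopy classes of surgery disks $E_i \subset S_i$.

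For the implication ($\Leftarrow$), given $\Pi \subset S$ containing $P_1 \cup P_2$, this bijection produces disks $E_i \subset S_i$ together with isometries $\iota_i : S_i \setminus E_i \to S \setminus \Pi$. The composition $\sigma := \iota_2^{-1} \circ \iota_1$ is an isometry $S_1 \setminus E_1 \to S_2 \setminus E_2$ isotopic to $f$, since both $\iota_i$ extend the canonical collapse identifications whose restriction off $D_\bullet$ is $f$. Regluing $E_2$ to $S_1 \setminus E_1$ along $\sigma|_{\partial E_1}$ then reproduces $S_2$ by construction, which is exactly a single surgery with surgery disks $E_1$ and $E_2$.

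For the implication ($\Rightarrow$), a single surgery gives disks $E_i \subset S_i$ and an isometry $\sigma : S_1 \setminus E_1 \to S_2 \setminus E_2$ isotopic to $f$. After isotopy I may suppose $\Delta_i \subset E_i$, so that I obtain isometries $\iota_i : S_i \setminus E_i \to S \setminus \Pi_i$ with $P_i \subset \Pi_i$. The two maps $\iota_1$ and $\iota_2 \circ \sigma$ are then isometric embeddings of the hyperbolic cone-surface with boundary $S_1 \setminus E_1$ into $S$, and they are isotopic (both realize, modulo $f$, the canonical collapse embedding). To conclude $\Pi_1 = \Pi_2$ it suffices to show that these two embeddings actually coincide, so that their images $S \setminus \Pi_1$ and $S \setminus \Pi_2$ agree.

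The main obstacle is precisely this rigidity statement: two isotopic isometric embeddings of a hyperbolic cone-surface with boundary into a hyperbolic cone-surface must coincide. The plan is to establish it by the same geodesic triangulation argument already used in Sublemma \ref{ssl:trian} and Lemma \ref{lm:holonomy}, adapted to the bounded setting exactly as in the proof of Proposition \ref{pr:isomdsk}: one triangulates $S_1 \setminus E_1$ by geodesic arcs whose vertices are the interior cone points together with a finite collection of points on $\partial E_1$, notes that the shape and length of each triangle is determined by the common holonomy and boundary data, and uses the isotopy to pin down the image of each vertex under the two embeddings. Since this surface-with-boundary variant of the rigidity argument is the same one already used in the appendix's setup, no new technology is required. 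Once $\iota_1 = \iota_2 \circ \sigma$ we immediately get $\Pi_1 = \Pi_2 =: \Pi$, the desired common enclosing disk containing $P_1$ and $P_2$.
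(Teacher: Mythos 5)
Your ($\Leftarrow$) direction is the paper's argument and is fine. The ($\Rightarrow$) direction aims at the right target ($\Pi_1=\Pi_2$ via uniqueness of the isometric embedding of the common complement into $S$), but it rests on the step ``after isotopy I may suppose $\Delta_i\subset E_i$'', and that step is a genuine gap. The remark after Proposition \ref{pr:isomdsk} only guarantees that every \emph{convex} disk isotopic to $D_i$ contains the minimal disk $\Delta_i$; an arbitrary surgery disk $E_i$ need not, and you cannot freely isotope it into position, because you need a disk that simultaneously (i) contains $\Delta_i$, (ii) is still an embedded disk isotopic to $D_i$, and (iii) has complement that still embeds isometrically into the other surface by a map isotopic to $f$. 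Requirement (iii) forces you to \emph{enlarge} $E_1$ rather than merely isotope it (otherwise $\sigma$ no longer applies to the new complement), and a regular neighborhood of $E_1\cup\Delta_1$ need not be a disk --- the union of two isotopic embedded disks can have positive first Betti number --- so the enlargement is not automatic; moreover the corresponding enlargement of $E_2$ is $E_2\cup\sigma(E_1'\setminus E_1)$, which has no reason to contain $\Delta_2$, so a second round is needed with the same difficulty. This is exactly what the paper's Lemma \ref{criterion:lm} is for: one minimizes boundary length over all disks isotopic to $D_1$ whose complement embeds isometrically in $S_2$ isotopically to $f$, obtaining a \emph{convex} piecewise-geodesic representative $D_1'$ (and its partner $D_2'$), and convexity then gives $\Delta_i\subset D_i'$ for free. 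You need this lemma, or a substitute, before the rest of your argument can start.

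A second, less serious point: your rigidity statement ``two isotopic isometric embeddings of a hyperbolic cone-surface with boundary into a hyperbolic cone-surface must coincide'' is false as stated --- a small metric disk with no cone point admits a continuum of isotopic, pairwise distinct isometric embeddings. What saves you here is that $S_1\setminus E_1$ is the complement of a disk in a closed hyperbolic cone-surface, so $\pi_1(S_1\setminus E_1)$ surjects onto $\pi_1(S_1)$ and the holonomy of the embedded piece is non-elementary; two isotopic isometric embeddings then have developing maps differing by an isometry of $\HH^2$ centralizing a non-elementary group, hence are equal. That holonomy argument is both necessary and cleaner than the triangulation argument you gesture at. The paper uses the same uniqueness implicitly when it asserts that $S_1\setminus D_1'$ and $S_2\setminus D_2'$ are isometric to the \emph{same} region $S'$ of $S$, so this second point is a repairable imprecision; only the first gap is a real obstruction.
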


The if part is easy: $S\setminus \Pi$ embeds in $S_\bullet$  and its complement
is a disk $\Delta'_\bullet$ isotopic to $D_\bullet$. Thus we can cut from $S_1$ the disk
$\Delta'_1$ and glue instead the disk $\Delta'_2$. The surface that we obtain is obviously isometric to $S_2$.

The only if part easily follows from the following lemma.

\begin{lemma}\label{criterion:lm}
If $S_2$ is obtained by a single surgery on $S_1$, then the surgery can be done replacing
a convex  disk $D'_1$ whose boundary is piecewise geodesic with some vertices
at cone points and some vertices in the smooth part (which can degenerate to segment)
by a disk $D'_2$ with similar properties.
 \end{lemma}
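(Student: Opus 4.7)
The plan is to realize $D'_1$ (and correspondingly $D'_2$) as coming from a length-minimizing disk in the collapsed surface $S$ introduced just before the lemma's statement, and to read off the piecewise geodesic structure from the minimizer.

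First, recall the setup: the minimal convex disks $\Delta_1\subset S_1$ and $\Delta_2\subset S_2$ produced by Proposition~\ref{pr:isomdsk} both isometrically embed complements into a single surface $S$ carrying one cone point $p$ of angle $\theta$. Their complements in $S$ are polygons $P_1,P_2$ centered at $p$; the vertices on $\partial P_i$ correspond to the cone points of $S_i$ that lay in $D_i$. A single surgery transforming $S_1$ into $S_2$ is equivalent to the existence of an embedded disk $\Pi\subset S$ with $\Pi\supset P_1\cup P_2$: the surgery disks are then $D'_i\supset D_i$ obtained by putting back the ``$S_i$-filling'' of $\Pi$ inside $S_i$, and conversely every single surgery produces such a $\Pi$ since $S\setminus\Pi$ must embed simultaneously in both $S_i$.

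Second, among all embedded disks $\Pi\subset S$ with $\Pi\supset P_1\cup P_2$ (a non-empty family by the hypothesis of single surgery), minimize the length of $\partial\Pi$. A minimizing sequence $\Pi_n$ has boundaries that subconverge in the Hausdorff topology to a piecewise geodesic graph $\gamma$, exactly as in the proof of Proposition~\ref{pr:isomdsk}. Since the only cone singularity of $S$ lying in $\Pi_n$ is $p\in P_1\cap P_2\subset\operatorname{int}(\Pi_n)$, the limit $\gamma$ cannot wrap around any cone point in the open region $S\setminus(P_1\cup P_2)$; hence $\gamma$ is either a simple piecewise geodesic closed curve bounding a disk $\Pi^*\supset P_1\cup P_2$, or, in a degenerate configuration where $P_1\cup P_2$ is essentially one-dimensional from $\gamma$'s viewpoint, a segment joining two points of $\partial(P_1\cup P_2)$.

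Third, the vertices of $\gamma$ are pinned down by the minimization. Inside $S\setminus(P_1\cup P_2)$ the curve is geodesic, for otherwise a local variation would decrease length while preserving the inclusion $\Pi\supset P_1\cup P_2$; so bends occur only at points of $\partial P_1\cup\partial P_2$. A further local-shortening argument shows that a bend can occur only at a vertex of the polygon $P_1$ or of $P_2$, because between two consecutive polygon vertices $\partial P_i$ is itself a geodesic arc and the minimizer cannot bend outward away from it. Now read the result back in $S_1$ through $S_1\setminus\Delta_1\hookrightarrow S$: a vertex of $\gamma$ coming from a vertex of $P_1$ is a cone point of $S_1$, while a vertex coming from a vertex of $P_2$ is a smooth point of $S_1$ (since such a point is a cone point only of $S_2$). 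This yields the mixed type of vertex described in the statement, and the symmetric description for $D'_2$ follows by swapping the roles of $S_1$ and $S_2$. Convexity of $D'_1$ and $D'_2$ follows because at each vertex the interior angle of $\Pi^*$ must be at most $\pi$: otherwise a local smoothing of $\gamma$ stays in the admissible family and strictly decreases length, contradicting minimality.

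The main technical obstacle will be handling the convergence of the minimizing sequence in the presence of the constraint $\Pi\supset P_1\cup P_2$. One must ensure that the Hausdorff limit $\gamma$ remains the non-degenerate boundary of a topological disk (rather than collapsing onto a tree, or splitting into several components), and one must rule out bends of $\gamma$ at non-vertex points of $\partial P_1\cup\partial P_2$. Both issues are resolved by standard length-area estimates in hyperbolic surfaces with cone singularities of angles in $(0,2\pi)$, combined with the fact that $\Pi$ is required to remain a topological disk throughout the minimization; the degenerate segment case of the lemma corresponds precisely to the limit in which the area of $\Pi^*$ tends to zero.
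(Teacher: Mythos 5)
Your argument is circular. You minimize boundary length over the family of embedded disks $\Pi\subset S$ with $\Pi\supset P_1\cup P_2$, and you justify the non-emptiness of this family by asserting that ``every single surgery produces such a $\Pi$ since $S\setminus\Pi$ must embed simultaneously in both $S_i$''. But the existence of such a $\Pi$ is precisely the ``only if'' direction of Proposition~\ref{criterion:prop}, and that direction is \emph{deduced from} the present lemma: given an arbitrary single surgery with disks $E_1\subset S_1$, $E_2\subset S_2$, there is no a priori reason why the common complement $S_1\setminus E_1\cong S_2\setminus E_2$ should sit inside $S$ as the complement of a single disk containing both $P_1$ and $P_2$. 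That compatibility is obtained only after the surgery disks have been replaced by convex ones, because convexity forces $\Delta_\bullet\subset D'_\bullet$ (see the remark following Proposition~\ref{pr:isomdsk}), hence $S_\bullet\setminus D'_\bullet\subset S_\bullet\setminus\Delta_\bullet$, and only then do the two canonical embeddings into $S$ match up on the overlap. For a non-convex $E_1$ one cannot conclude $\Delta_1\subset E_1$, and the ``filling'' picture you rely on to produce $\Pi$ is not available; so the family you minimize over is not known to be non-empty under the lemma's hypothesis.

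The repair is to run the minimization in $S_1$ rather than in $S$: let $\mathcal D$ be the set of disks embedded in $S_1$, isotopic to $D_1$, whose complement admits an isometric embedding into $S_2$ isotopic to $f$. This family is non-empty exactly by the hypothesis that $S_2$ is obtained from $S_1$ by a single surgery. Taking a sequence in $\mathcal D$ minimizing boundary length and passing to the limit (the same compactness and local-shortening analysis you describe, as in Proposition~\ref{pr:isomdsk}) yields a convex disk $D'_1$ with piecewise geodesic boundary, possibly degenerated to a segment; its complement still embeds isometrically in $S_2$, and the complement of that embedding is the required $D'_2$. The rest of your analysis of the limit (geodesic arcs between bends, interior angles at most $\pi$ by local smoothing, the mixed cone-point/smooth-point nature of the vertices) is sound and transfers verbatim to this family.
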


\begin{proof}
We denote by $\mathcal D$ the set of disks embedded in $S_1$ isotopic to $D_1$,
whose complement can be embedded in $S_2$ by an isometric map isotopic to $f$.

Take a sequence of disks $D_n\in\mathcal D$ which minimizes the length of the boundary.
\fb{
Take a parameterization $c_n:[0,1]\rightarrow\partial D_n$. Up to taking a subsequence,
$c_n$ converges to a curve $c_\infty$ 
 and $D_n$ converges to  some subset $D'_1$.
By the minimality it turns out that $c_\infty $ is} \jms{a} \fb{piecewise geodesic curve with vertices
at cone points. Moreover if $c_\infty(t_0)$ is a cone point, then the segments
$[c_\infty(t_0)-\epsilon, c_\infty(t_0)]$ and $[c_\infty(t_0), c_\infty (t_0+\epsilon)]$}
\jms{form} \fb{an angle bigger than $\pi$ in $S\setminus D'_1$.

It follows that there are two cases: either $c_\infty$ spans a segment with vertices
at two cone points, or it is an embedded curve. In the first case $D'_1$ coincides
with the support of $c_\infty$, whereas in the second case it is a convex disk bounded
by $c_\infty$.
}


In both cases $S_1\setminus D'_1$ embeds in $S_2$. The complement $D'_2$ of the embedding
of $S_1\setminus D'_1$ in $S_2$ is still a convex disk with piecewise geodesic or a segment.
Clearly $S_2$ is obtained from $S_1$ by cutting $D'_1$ and replacing $D'_2$.
\end{proof}

We can now prove that the condition of Proposition \ref{criterion:prop} is necessary.
Let $D'_1$ and $D'_2$ be as in Lemma \ref{criterion:lm}.
The minimal disk $\Delta_\bullet$ is contained in $D'_\bullet$. 

It follows that $S_1\setminus D'_1$ and $S_2\setminus D'_2$ are both isometric
to a region $S'$ of $S$, whose complement --- say $\Pi$ --- is a polygon with vertex at $p$.
Moreover $S'$ is contained in $S\setminus P_1$ and $S\setminus P_2$, so $\Pi$ contains
both $P_1$ and $P_2$, and this concludes the proof of Proposition \ref{criterion:prop}.

\medskip

In the remaining part of this section we will construct an example of two surfaces $S_1$ and $S_2$
satisfying the condition given at the beginning of the appendix but such that the disks $P_1$ and $P_2$ in $S$ are not contained in a disk. 
This will show that $S_2$  cannot be obtained by a single surgery on $S_1$.

First we will construct a surface $S$ containing only a single cone point, then we will
find two convex polygons $P_1$ and $P_2$ around the cone point whose union is contained in
no embedded disk. Finally, making a surgery on $P_\bullet$, we will construct two surfaces $S_\bullet$
such that the complement of the corresponding minimal disk is isometric to the complement of $S\setminus P_\bullet$.

\subsubsection*{Construction of $S$ and $P_\bullet$}

We consider in $\mathbb H^2$ a regular convex  octagon $Q$ such that the sum of its interior angles is $\theta \in (0,2\pi)$. 
Gluing opposite sides of $Q$ we obtain a hyperbolic surface $S$ with a cone point
corresponding to the vertices of $Q$ whose cone angle is $\theta$. Denote by $\pi:Q\rightarrow S$
the projection map.

Let $l$ be the length of any edge of $Q$. Choosing $l'<l/2$, we can consider for every vertex  $p_i$
of $Q$ the triangle  $T_i$ with two edges of length $l'$ contained in the edges of $Q$ at $p$. 
The union of those triangles projects to a convex polygon  $P_0$ with center $\bar p$ in $S$.
Notice that the angle of this polygon is equal to $2\phi$, where $\phi$ is the base angle
of the triangles. We now add to $P_0$ a triangle $Z_i$ with a vertex at the center of $Q$ and 
opposite edge equal to an edge  $e_i$ of a triangle $T_i$. If $l'$ is close to $l/2$
then the sum of the angle of $Z_i$ at a vertex of $e_i$ and $2\phi$ is less than $\pi$,
so $P_i=P_0\cup Z_i$ is a convex polygon contained in $S$. 
Since $P_1\cup P_2$ contains a non-trivial loop, it is not contained in any disk of $S$.

\subsubsection*{Construction of $S_1$ and $S_2$}

Notice that $P_1$ is a polygon with center $\bar p$ in $S$. 
In fact $T_1\cap Z_1$ is the union of two triangles with a vertex at $\bar p$. 
In particular $P_1$ can be decomposed as a union of
triangles with vertex at $\bar p$. Each of these triangles has a boundary edge and two interior edges.
There is one interior edge joining $\bar p$ to the center of $Q$ whose length is $u$, whereas
the length of all the other interior edges is $l'$.

 Choose $\epsilon$ small. We can deform
 every triangle of the decomposition of $P_1$ without changing its boundary edge and shortening
 the other two edges of $\epsilon$. Let us call $P_1'$ the polygon obtained in this way.
 Replacing $P_1$ by $P_1'$ we get a surface $S_1$ with a cone point at each vertex of $P_1'$ and
 a central vertex.
 
 Analogously we can obtain a surface $S_2$ by making a surgery on $P_2$.
 It is not difficult to construct a diffeomorphism $f: S_1\setminus P_1'\rightarrow S_2\setminus P_2'$
 such that the following diagram is homotopically commutative.
 
To conclude it is sufficient to notice that $P_\bullet'$ is a minimal disk whose complement is
isometric to $S\setminus P_\bullet$. Since there is no disk containing both $P_1$ and $P_2$,
Proposition \ref{criterion:prop} implies that $S_2$ cannot be obtained by a single surgery 
on $S_1$.

\bibliography{bibliocollision}
\bibliographystyle{alpha}

\end{document}